\documentclass[10pt]{amsart}
\usepackage{amsfonts, amssymb, amsmath}
\usepackage{mathtools}
\usepackage{tikz-cd}
\usepackage[all]{xy}

\usepackage[hidelinks]{hyperref}
\newcommand{\spa}{\medskip}
\usepackage[utf8]{inputenc}

\usepackage[left=25mm,right=25mm,top=38mm,bottom=34mm]{geometry}

\newcommand{\stack}[1]{\cite[\href{https://stacks.math.columbia.edu/tag/#1}{Tag #1}]{Stacks}}
\newcommand{\mr}[1]{{\mathrm{#1}}}
\newcommand{\F}{\mathbb{F}}
\newcommand{\Z}{\mathbb{Z}}
\newcommand{\iso}{\xrightarrow{\sim}}
\newcommand{\Zp}{\mathbb{Z}_p}
\newcommand{\Qp}{\mathbb{Q}_p}
\newcommand{\Fp}{\F_p}
\newcommand{\calC}{\mathcal{C}}
\newcommand{\calF}{\mathcal{F}}
\newcommand{\calO}{\mathcal{O}}
\newcommand{\calM}{\mathcal{M}}

\newcommand{\conv}{{\mr{conv}}}
\newcommand{\Spec}{\mr{Spec}}
\newcommand{\Spf}{\mr{Spf}}
\newcommand{\Hom}{\mr{Hom}}
\newcommand{\Ker}{\mr{Ker}}
\newcommand{\Spa}{\mr{Spa}}
\newcommand{\cotimes}{\widehat{\otimes}}
\begin{document}
	\newtheorem{theo}[subsubsection]{Theorem}
	\newtheorem*{theo*}{Theorem}
	\newtheorem{ques}[subsubsection]{Question}
	\newtheorem*{ques*}{Question}
	\newtheorem{conj}[subsubsection]{Conjecture}
	\newtheorem{prop}[subsubsection]{Proposition}
	\newtheorem{lemm}[subsubsection]{Lemma}
	\newtheorem*{lemm*}{Lemma}
	\newtheorem{coro}[subsubsection]{Corollary}
	\newtheorem*{coro*}{Corollary}

	\theoremstyle{definition}
	\newtheorem{defi}[subsubsection]{Definition}
	\newtheorem*{defi*}{Definition}
	\newtheorem{hypo}[subsubsection]{Hypothesis}
	\newtheorem{rema}[subsubsection]{Remark}
	\newtheorem{warn}[subsubsection]{Warning}
	\newtheorem{exam}[subsubsection]{Example}
	\newtheorem{nota}[subsubsection]{Notation}
		\newtheorem*{nota*}{Notation}
	\newtheorem{cons}[subsubsection]{Construction}

	\numberwithin{equation}{section}
\title{The convergent stack}
		\date{\today}
\author{Marco D'Addezio}
\address{Institut de Recherche Mathématique Avancée (IRMA)\\
Université de Strasbourg\\
7 rue René-Descartes\\
67000 Strasbourg, France}
\email{daddezio@unistra.fr}

\subjclass[2020]{14F30, 14F40, 14G45, 14A20}\keywords{Convergent cohomology, convergent isocrystal, rigid cohomology, de Rham stack}
\begin{abstract}
Inspired by Simpson's de Rham stack and Drinfeld's crystalline stack, we develop
a stacky approach to convergent cohomology and convergent isocrystals in positive
characteristic. To any scheme $X$ over $\mathbb{F}_p$ we attach a convergent stack
$X_{\mathrm{conv}}$. When $X$ is of finite type
over a perfect field, its finitely generated quasi-coherent $\mathcal{O}[\tfrac1p]$-modules
are equivalent to convergent isocrystals over $X$, compatibly with cohomology. When $X$
embeds into a smooth $p$-adic formal scheme, we describe $X_{\mathrm{conv}}$ explicitly as the
quotient of an open tube by a $p$-adic formal groupoid. For $f$-semiperfect schemes,
by contrast, the convergent stack is representable by a preperfectoid adic space over
$\mathbb{Q}_p$.
\end{abstract}
\maketitle

\tableofcontents

\section{Introduction}
\subsection{Main results}

The theory of crystalline cohomology provides a powerful tool for studying algebraic varieties and $p$-divisible groups in positive characteristic. To a smooth proper variety over a perfect field of characteristic $p>0$, it associates cohomology groups with coefficients in the ring of Witt vectors, which in many ways behave like the singular cohomology of a complex variety. However, for singular or non-proper varieties, crystalline cohomology is not well-behaved, and several variants have been introduced to address these problems. One such variant is \textit{convergent cohomology}, introduced by Berthelot \cite{Ber86} and reinterpreted by Ogus via the \textit{convergent site} \cite{Og84}. This yields a rational finitely generated cohomology theory for proper varieties.

\spa

In this article, we revisit Ogus' convergent site by allowing schemes that are not of finite type over a field, and we study the associated \textit{convergent stack} perspective, inspired by Simpson's de Rham stack \cite{Sim96}. 

\begin{defi}
	For a scheme $X$ over $\F_p$, we define the \textit{convergent stack of level $m$} of $X$ as the presheaf $X_\mr{conv}^{(m)}$ which assigns $$B\mapsto X(B/N_m(B)),$$ where $B$ is a $p$-complete $p$-torsion free ring and $N_m(B)$ is the ideal of elements $x\in B$ such that $x^{p^m}\in (p)$. The \textit{convergent stack} of $X$ is the colimit $$X_\mr{conv}\coloneqq\mr{colim}_{m\geq 0} X_\mr{conv}^{(m)}.$$ 
\end{defi}

Both $X_\mr{conv}^{(m)}$ and $X_\mr{conv}$ are stacks in sets for the $p$-completely étale topology (see Definition \ref{PEtale}). In the finite type setting, the cohomology of finitely generated quasi-coherent $\calO[\tfrac{1}{p}]$-modules over $X_\conv$ recovers the convergent cohomology of convergent isocrystals on $X$. In particular, we have the following comparison result.

\begin{theo}[Corollary \ref{GlobalconvDeRhamComparison:c}]\label{IntroGlobalconvDeRhamComparison:t}
Let $\mathfrak{Y}$ be a smooth quasi-compact $p$-adic formal scheme over $\Zp$. For every closed immersion $X\hookrightarrow \mathfrak{Y}\otimes_{\Zp}\Fp$ and every finitely generated quasi-coherent $\calO[\tfrac{1}{p}]$-module $\calM$ over $X_\conv$, there exists a canonical quasi-isomorphism
$$R\Gamma(X_{\conv}, \calM) \iso R\Gamma_\mr{dR}(]X[,\calM^\mr{ad}),$$ where $]X[$ is the open tube of $X$ in the generic fibre of $\mathfrak{Y}$ and $\calM^\mr{ad}$ is a flat connection over $]X[$ associated to $\calM$.
\end{theo}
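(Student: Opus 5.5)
The plan is to use the explicit presentation of $X_\conv$ announced in the abstract. Let $\mathfrak{T}_m$ be the $p$-adic completion of the dilatation (blow-up) of $\mathfrak{Y}$ along the ideal $(p)+\mathcal{I}^{p^m}$, where $\mathcal{I}$ is the ideal of $X$; we keep only the $p$-torsion free part. Let $\mathfrak{T}_m^{(\bullet)}$ be the analogous dilatations of $\mathfrak{Y}^{\bullet+1}$ along the diagonal of $X$. We first check that $\mathfrak{T}_m\to X^{(m)}_\conv$ is a $p$-completely étale surjection. The key input is the lifting property: a $p$-complete, $p$-torsion free ring $B$ and a map $\Spec(B/N_m(B))\to X$ give, étale locally on $B$, a lift $\Spf B\to\mathfrak{Y}$, because $\mathfrak{Y}$ is smooth and $N_m(B)$ is henselian-type in the $p$-complete setting. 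Such a lift sends $\mathcal{I}^{p^m}$ into $pB$, so it factors uniquely through $\mathfrak{T}_m$ by the universal property of dilatations and $p$-torsion freeness. The same argument on products identifies the Čech nerve of $\mathfrak{T}_m\to X_\conv^{(m)}$ with $\mathfrak{T}_m^{(\bullet)}$. Passing to the colimit over $m$ then presents $X_\conv$ as the quotient of the ind-system $\{\mathfrak{T}_m\}$, whose generic fibres exhaust the open tube $]X[$, by the formal groupoid $\{\mathfrak{T}^{(1)}_m\}$.

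Next, for a finitely generated quasi-coherent $\calO[\tfrac1p]$-module $\calM$, descent along this presentation computes $R\Gamma(X_\conv,\calM)$. It is $R\lim_m$ of the totalizations of the cosimplicial complexes $R\Gamma(\mathfrak{T}_m^{(\bullet)},\calM)[\tfrac1p]$; the $R\lim$ is harmless after inverting $p$ once we are working on the tube. The module $\calM^\mr{ad}$ is the coherent sheaf on $]X[$ obtained from the $\mathfrak{T}_m[\tfrac1p]$-modules. The descent datum over $\mathfrak{T}^{(1)}$ gives a stratification, and hence an integrable connection, because the first infinitesimal neighbourhood of the diagonal sits inside the dilatation of the diagonal. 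The next task is to compare the Čech--Alexander complex $\mathrm{Tot}\,\Gamma(\mathfrak{T}^{(\bullet)},\calM)$ with the de Rham complex $\calM^\mr{ad}\otimes\Omega^\bullet_{]X[}$. This follows the classical crystalline/convergent pattern. Locally on $\mathfrak{Y}$ choose étale coordinates $t_1,\dots,t_d$. Then $\mathfrak{T}^{(n)}_m$ is, over $\mathfrak{T}_m$, a relative dilated polydisc in the variables $\tau_i=t_i\otimes1-1\otimes t_i$ satisfying $\tau_i^{p^m}\in p(\ldots)$. The generic fibre is therefore a family of open polydiscs of radius tending to $1$ as $m\to\infty$. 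One builds the usual double complex $\calM\otimes\Omega^\bullet$ on the groupoid and uses a Poincaré lemma for these polydiscs to show each row is a resolution. Globalizing is done by Zariski descent on $\mathfrak{Y}$, which is quasi-compact. Canonicity, meaning independence of $\mathfrak{Y}$ and of the coordinates, follows by comparing two embeddings via the diagonal embedding into $\mathfrak{Y}\times\mathfrak{Y}'$.

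I expect the main obstacle to be the Poincaré lemma step. The relative de Rham complex of a dilated polydisc is not acyclic at any finite level $m$: denominators of the form $p^{-k/p^m}$ appear in integrating $\tau^n$. Exactness therefore only holds pro-systematically in $m$, up to bounded $p$-power torsion, or after inverting $p$ and passing to the colimit of radii. I would first prove a pro-isomorphism statement, namely that the map from level $m'$ to level $m$ kills the cohomology up to a fixed power of $p$. I would then use it to show that $R\lim_m$ of the rationalized complexes is the de Rham cohomology of the open tube, which is itself an $R\lim$ over the affinoid tubes. This needs the Mittag-Leffler property of that inverse system. It is also where the finite generation of $\calM$ is used, since it gives coherence on each affinoid piece and hence vanishing of higher coherent cohomology there.
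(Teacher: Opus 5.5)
Your architecture is the paper's: present the level-$m$ stack by tubes and a groupoid, compute cohomology levelwise by the \v{C}ech nerve and then take $R\lim_m$, get the connection from the groupoid, compare via a double complex and a convergent Poincar\'e lemma, and globalise. However, the first step is false as stated, because $(p)+\mathcal{I}^{p^m}$ is the wrong ideal to dilate.

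For any lift $g\colon \Spf B\to\mathfrak{Y}$ of a point of $X^{(m)}_\conv(B)$ one only has $g(\mathcal{I})\subseteq N_m(B)$, i.e.\ $g(f)^{p^m}\in pB$ for each single $f\in\mathcal{I}$. Two lifts differ by elements of $N_m(B)$, so only $g(f)^{p^m}\bmod p$ is independent of the lift. Nothing controls products of $p^m$ distinct elements.

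Here is a counterexample. Take $m=1$, $\mathfrak{Y}=\Spf\Zp\langle t_1,t_2\rangle$ and $X=V(t_1^p,t_2^p)$. Let $B$ be the $p$-adic completion of $\Zp\langle t_1,t_2\rangle[t_1^{p^2}/p,\,t_2^{p^2}/p]$, so that $B/p=\F_p[t_1,t_2,y_1,y_2]/(t_1^{p^2},t_2^{p^2})$ and the inclusion defines a point of $X^{(1)}_\conv(B)$. Every lift $g$ over a $p$-completely \'etale $B'$ satisfies
$g(t_1^pt_2^{p(p-1)})=g(t_1t_2^{p-1})^p\equiv t_1^pt_2^{p(p-1)} \pmod{pB'}$.
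The right-hand side is nonzero in $B/p$, hence nonzero in $B'/p$ for some member $B'$ of any $p$-completely \'etale covering. Yet $t_1^pt_2^{p(p-1)}\in\mathcal{I}^p$.

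So $\mathfrak{T}_1\to X^{(1)}_\conv$ is not surjective, and $\mathfrak{T}^{(\bullet)}_m$ is not the \v{C}ech nerve of $\mathfrak{T}_m\to X^{(m)}_\conv$. The repair is to dilate along $\mathcal{I}^{(m)}$, the ideal generated by $p$ and the $f^{p^m}$ for $f\in\mathcal{I}$. With that ideal, your lifting argument is exactly Lemma~\ref{Covering:l} and Theorem~\ref{Quotient:t}. Your tubes are, up to indexing, Ogus' tubes of order $p^m$, which are only cofinal with the level-$m$ ones. The shift depends on the number of generators of the ideal of $X$ in $\mathfrak{Y}^{n+1}$, which grows with $n$, so reindexing the cosimplicial system is clumsier than just fixing the ideal.

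With this correction your route is essentially the paper's, with two differences in execution.

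First, you use \'etale coordinates to view $\mathfrak{T}^{(n)}_m$ as relative dilated polydiscs over $\mathfrak{T}_m$. The paper instead proves the comparison for polynomial embeddings (Theorem~\ref{convDeRhamComparison1:t}). It then transfers this to smooth embeddings via a section $S\to P^\wedge_K$ and a homotopy argument (Theorem~\ref{convDeRhamComparison2:t}), which avoids analysing the higher tubes in coordinates.

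Second, the obstacle you single out is handled without pro-systems. The paper first takes $\lim_m$ of each \v{C}ech term, with $R^1\lim=0$ coming from Ogus' argument (Proposition~\ref{MLZero:p}). It then proves the Poincar\'e lemma directly on $E_\infty=\lim_m E_m$ (Lemma~\ref{PoincareLemmaConvergent:l}): a $1$-form at level $m+1$ integrates to a function at level $m$, so the augmented de Rham complex of $E_\infty$ is contractible.

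Finally, the Bhatt--de Jong argument needs two inputs. Besides the Poincar\'e lemma, which makes the coface maps between de Rham complexes quasi-isomorphisms, it needs that the cosimplicial modules $\calM(\bullet)\otimes\Omega^i(\bullet)$ are homotopic to zero for $i>0$ (Lemma~\ref{ConvAcyclicColumns:l}). You should state this second input explicitly.
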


We also prove relative versions of Theorem \ref{IntroGlobalconvDeRhamComparison:t} over Noetherian $p$-complete $p$-torsion free rings, generalising \cite[Thm. 0.6.6]{OgusCT}. For the proof, we adapt the arguments of \cite{BdJ11} in the context of convergent cohomology.

\spa
The convergent stack is closely related to the crystalline stack of
\cite{Dri22}, but it is in one respect a simpler object: it is a stack in
\textit{sets} rather than \textit{groupoids}. As with the crystalline stack,
	when $X$ admits an embedding into a smooth $p$-adic formal scheme $\mathfrak{Y}$, it can be described as a quotient of the $p$-adic open tube of $X$ in $\mathfrak{Y}$ by a certain ``convergent'' $p$-adic formal groupoid.
		\begin{theo}[Theorem \ref{Quotient:t}]\label{IntroQuotient:t}Let $\mathfrak{Y}$ be a smooth $p$-adic formal scheme over $\Zp$ and let $X\hookrightarrow \mathfrak{Y}\otimes_{\Zp} \Fp$ be a closed immersion. For every $m\geq 0$, there is a natural isomorphism $$X_\mr{conv}^{(m)} =\mathfrak{X}_0^{(m)}/\mathfrak{X}_1^{(m)},$$ where $\mathfrak{X}_1^{(m)}\rightrightarrows \mathfrak{X}_0^{(m)}$ is a $p$-adic formal groupoid (see Construction \ref{SimplicialTubes:c}).
\end{theo}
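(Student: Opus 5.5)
Before starting, I need the groupoid explicitly, since Construction \ref{SimplicialTubes:c} is only referenced here. I take $\mathfrak{X}_n^{(m)}$ to be the level-$m$ $p$-adic tube of $X$ in $\mathfrak{Y}^{n+1}=\mathfrak{Y}\times_{\Zp}\cdots\times_{\Zp}\mathfrak{Y}$. Concretely, for $p$-complete $p$-torsion free $B$, its $B$-points are the tuples $(y_0,\dots,y_n)\in\mathfrak{Y}^{n+1}(B)$ such that the composites $\Spec(B/N_m(B))\to\mathfrak{Y}\otimes\Fp$ all agree and factor through $X$. Equivalently, $\mathfrak{X}_n^{(m)}$ is the $p$-adic formal scheme obtained from $\mathfrak{Y}^{n+1}$ by adjoining $f^{p^m}/p$ for $f$ in the ideal of the diagonal copy of $X$, then taking $p$-completion and the $p$-torsion free quotient. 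The groupoid structure is induced by the simplicial structure of $\mathfrak{Y}^{\bullet+1}$.

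The natural map $\mathfrak{X}_0^{(m)}\to X_\conv^{(m)}$ sends $y\in\mathfrak{X}_0^{(m)}(B)$ to the induced point $\Spec(B/N_m(B))\to X$. The two maps $\mathfrak{X}_1^{(m)}\rightrightarrows\mathfrak{X}_0^{(m)}$ become equal after composing with it, by the very definition of $\mathfrak{X}_1^{(m)}$. We therefore get a map $\mathfrak{X}_0^{(m)}/\mathfrak{X}_1^{(m)}\to X_\conv^{(m)}$, where the quotient is sheafified for the $p$-completely étale topology. Because $X_\conv^{(m)}$ is a sheaf in sets, it suffices to prove two things:
\begin{enumerate}
\item[(a)] the map $\mathfrak{X}_1^{(m)}\to\mathfrak{X}_0^{(m)}\times_{X_\conv^{(m)}}\mathfrak{X}_0^{(m)}$ is an isomorphism;
\item[(b)] the map $\mathfrak{X}_0^{(m)}\to X_\conv^{(m)}$ is an epimorphism of $p$-completely étale sheaves.
\end{enumerate}
Point (a) is tautological from the description of points above. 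A pair $(y_0,y_1)$ lies in the fibre product exactly when both points induce the same point of $X$ on $B/N_m(B)$, and these are precisely the $B$-points of $\mathfrak{X}_1^{(m)}$. Point (a) also shows that $\mathfrak{X}_1^{(m)}$ is an equivalence relation, hence a groupoid.

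Point (b) is the main step. Fix a $p$-complete $p$-torsion free $B$ and $x\in X(B/N_m(B))$. Composing with $X\hookrightarrow\mathfrak{Y}\otimes\Fp$ gives a point $\bar x\in\mathfrak{Y}(B/N_m(B))$, and we must lift it $p$-completely étale locally to $y\in\mathfrak{Y}(B)$.
\begin{enumerate}
\item Reduce to the affine case. Cover $\mathfrak{Y}$ by affine opens $\Spf A$ with $A$ $p$-completely étale over $\Zp\langle t_1,\dots,t_d\rangle$. Zariski localize $B$ so that $\bar x$ lands in one such open; opens lift uniquely along $B\to B/N_m(B)$ because $N_m(B)$ is contained in the radical of $pB$.
\item Lift the coordinates. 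The values $t_i$ in $B/N_m(B)$ lift arbitrarily to $B$, which gives a point of $\Zp\langle t\rangle$.
\item Lift the étale part. Write $C=A\cotimes_{\Zp\langle t\rangle}B$; this is $p$-completely étale over $B$, and the point $\bar x$ gives a section $C\to B/N_m(B)$. Since $C\otimes B/N_m(B)\to B/N_m(B)$ is an étale section, it is an open and closed idempotent summand. Lift the idempotent of $C/pC$ through the $p$-adically henselian pair $(C,pC)$, noting that $N_m(B)/pB$ is a nil ideal. This gives a $p$-completely étale $B\to B'$ with $B'\to B'/N_m(B')$ compatible, so that the section lifts to $B'$.
\end{enumerate}
The resulting $y$ lies in $\mathfrak{X}_0^{(m)}(B')$, which proves (b).

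The main obstacle is that $N_m(B)$ is not $p$-adically nilpotent, only nil modulo $p$. Standard formal smoothness therefore does not apply directly, and we must check that $N_m$ is compatible with $p$-completely étale base change, i.e.\ that $N_m(B')=N_m(B)B'$ up to radicals. I would handle this by working with $\bar x$ modulo $p$ (using $\sqrt{pB}\supseteq N_m(B)$) and using that the étale topology is insensitive to nilpotent thickenings. Smoothness then lifts points from $B/\sqrt{pB}$-level data étale locally via the henselian property, and it remains to check that the lift reduces to $x$ modulo $N_m$. This holds because points of an étale $C$ over a fixed coordinate lift are determined by their reduction modulo a nil ideal.
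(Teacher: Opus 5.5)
Your proof is correct. Its skeleton matches the paper's: your point (a) is Lemma \ref{FibreSquare:l}, and the theorem then reduces to surjectivity of $\mathfrak{X}_0^{(m)}\to X_\conv^{(m)}$.

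The difference is in how you lift $\bar x\in\mathfrak{Y}(B_{[m]})$ to $\mathfrak{Y}(B)$. The paper reduces to $\mathfrak{Y}=\Spf(\tilde C)$, with $\tilde C$ the $p$-adic completion of a smooth algebra. It then cites the lifting property of smooth algebras (\stack{07K4}) to conclude that $\mathfrak{Y}(B)\to\mathfrak{Y}(B_{[m]})$ is surjective on the nose, with no \'etale localisation at all. You instead work in \'etale coordinates. You lift the $t_i$ freely, and then lift the section of the \'etale algebra $C/N_m(B)C$ by lifting the corresponding idempotent, first through the nil ideal $N_m(B)C/pC$ and then through the henselian pair $(C,pC)$. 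Your route is more hands-on: it shows exactly how the non-nilpotence of $N_m(B)/pB$ is absorbed, namely that idempotents lift along nil ideals. The paper's route is a one-line citation of a general lifting lemma.

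Two remarks on your write-up.

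\emph{Your $B'$ is $B$.} Your $B'=Ce$ is a $p$-completely \'etale $B$-algebra that becomes isomorphic to $B_{[m]}$ after base change to $B_{[m]}$. By Lemma \ref{EtaleLifting:l} it is therefore isomorphic to $B$. So your argument also gives surjectivity on $B$-points after Zariski localisation, exactly as in the paper. You never actually need $\{B\to B'\}$ to be a covering, which you did not check explicitly; it is one anyway, since $\Spec(B'/p)\to\Spec(B/p)$ is a homeomorphism.

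\emph{Your last paragraph overstates the obstacle.} The detour through $B/\sqrt{pB}$ is unnecessary. Standard smooth lifting does apply directly. A smooth algebra is finitely presented, so lifting along the locally nilpotent ideal $N_m(B)/pB$ reduces to lifting along the finitely generated subideal generated by the values of the defining equations at chosen lifts. That subideal is nilpotent, so formal smoothness applies. One then lifts from $B/p$ to $B$ using $p$-adic completeness. This is precisely what the paper's citation packages.
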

On the other hand, passing to \textit{$f$-semiperfect schemes}, the convergent stack is represented by  \textit{preperfectoid adic spaces} (see Definition \ref{FSemiPerfect:d} and \cite[Def. 3.7.1]{KL15}).
	\begin{theo}[Theorem \ref{ConvStackFunctor}]\label{IntroConvStackFunctor}
		The assignment $X \mapsto X_{\mr{conv}}$ induces a functor
		$$\left\{f\text{-}\mr{semiperfect}\ \mr{schemes}/\F_p\right\} \to \left\{\mr{Preperfectoid}\ \mr{spaces}/{\Qp}\right\}.$$
	\end{theo}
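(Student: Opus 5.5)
Since $X_\conv$ is a colimit of the presheaves $X_\conv^{(m)}$, I plan to establish representability first for an affine $f$-semiperfect scheme $X=\Spec R$, then glue. By Definition \ref{FSemiPerfect:d}, $R$ is a quotient of a perfect ring $R^\flat$, e.g. the inverse limit perfection $\varprojlim_{x\mapsto x^p} R$, and the kernel of $R^\flat\to R$ contains $x^{p^N}$ for all $x$ in it for some fixed $N$. This is the ``$f$'' (finite Frobenius power) condition, and I expect it to be exactly what makes the construction work.

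For a $p$-complete $p$-torsion free $B$, I will describe $X(B/N_m(B))=\Hom(R,B/N_m(B))$ in terms of $A_\inf$-type data. The key observation is that $B/N_m(B)$ is reduced enough that Frobenius behaves well: $x\mapsto x^{p^m}$ induces a map $B/N_m(B)\to B/p$. A map $R\to B/N_m(B)$ then lifts canonically to $R^\flat\to \varprojlim_{\phi}B/p = B^\flat$ (tilt), and by deformation theory to $W(R^\flat)\to B$ via Fontaine's $\theta$-type map $A_\inf(R^\flat)\to B$. Conversely, such a map factors through $B/N_m(B)$ on $R$ precisely when the kernel $I=\ker(R^\flat\to R)$ maps into $N_m$ after reduction. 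Using the $f$-condition, for $m\ge N$ this is the condition that $[x]^{p^{m}}\in pB$, i.e. $|[x]|^{p^m}\le |p|$ for $x\in I$. Hence, letting $\mathfrak{X}^{(m)}=\Spf$ of the $p$-completion of $W(R^\flat)[[x]^{p^m}/p : x\in I]$ modulo torsion (a rational-localization-type ring), I will show $X_\conv^{(m)}\cong h_{\mathfrak{X}^{(m)}}$ on $p$-torsion free $B$. The colimit over $m$ is then an increasing union of the generic fibres $\Spa$ of these rings, i.e. the open subset $\{|[x]|<1\ \forall x\in I\}$ of $\Spa(W(R^\flat)[1/p])$, which is the adic space $\mathcal{Y}_R$ I will take as representative.

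Next I will check $\mathcal{Y}_R$ is preperfectoid in the sense of \cite[Def. 3.7.1]{KL15}: after base change to a perfectoid field such as $\widehat{\Qp(p^{1/p^\infty})}$, each affinoid piece becomes perfectoid because $W(R^\flat)\widehat\otimes$ becomes an $A_\inf$-algebra of a perfect ring, and adjoining $[x]^{p^m}/p$ with $x$ perfect (all $p$-power roots $[x^{1/p^k}]$ exist) keeps Frobenius surjective mod $p^{1/p}$; rational localizations of perfectoid are perfectoid. Then functoriality: a map $\Spec R\to \Spec S$ of $f$-semiperfect rings induces $S^\flat\to R^\flat$ carrying kernels appropriately, giving maps of adic spaces compatible with the moduli description; independence of the choice of presentation follows from the functor-of-points identification. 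Gluing along Zariski opens uses that opens of $X$ correspond to rational opens (via Teichmüller lifts of localizing elements), and the stack property in the $p$-completely étale topology.

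The main obstacle I expect is the identification of points: showing that every map $R\to B/N_m(B)$ lifts uniquely to $W(R^\flat)\to B$ with the right condition, since $B/N_m(B)$ is not of characteristic $p$ a priori only modulo $p$-nilpotence. I will try first to prove $N_m(B)\supseteq pB$ and that Frobenius$^m$ gives a well-defined injective map $B/N_m(B)\to B/p$, reducing to the perfect-ring universal property of $W(R^\flat)$ and $B^\flat$; only functors on $p$-torsion free test rings need be compared, which is where the torsion-free quotient in $\mathfrak{X}^{(m)}$ enters.
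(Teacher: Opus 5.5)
Your finite-level identification is correct and matches the paper's Lemma \ref{TiltIsomorphism:l} and Proposition \ref{SemiPerfectRep:p}. It holds for every semiperfect ring, needs no restriction $m\geq N$, and is the easy part of the argument.

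The genuine gap is that you have misread the hypothesis. The property you call the ``$f$'' condition is that $I=\Ker(R^\flat\to R)$ contains $x^{p^N}$ for all $x\in I$. Every ideal has this property, so your argument uses no hypothesis and would apply verbatim to all semiperfect rings. In Definition \ref{FSemiPerfect:d} the ``f'' refers to finite generation: $R$ must be isogenous to a semiperfect $D$ with $\Ker(D^\flat\to D)$ \emph{finitely generated}. Your later steps tacitly require exactly this finiteness:
\begin{itemize}
\item If $I$ is not finitely generated, adjoining $[x]^{p^m}/p$ for all $x\in I$ is not a rational localization of $\Spa(W(R^\flat)[\tfrac1p],W(R^\flat))$. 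So ``rational localizations of perfectoid are perfectoid'' cannot be invoked.
\item The level-$m$ locus is then an infinite intersection of rational subsets. In general it is not open in the level-$(m+1)$ locus.
\item Nothing forces a single $m$ to work for all $x\in I$. Hence the union of the levels need not equal $\{|[x]|<1\ \forall x\in I\}$, and you give no reason why it should be open.
\end{itemize}
So you have not produced an adic space representing the colimit.

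Two steps from the paper's proof are missing.

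First, you need an isogeny reduction. One has $R^\flat=D^\flat$ and $\Ker(R^\flat\to R)\subseteq\Ker(D^\flat\to D)$. Moreover $y^{p^N}\in\Ker(R^\flat\to R)$ for every $y\in\Ker(D^\flat\to D)$. Hence the level-$m$ tube of $R$ sits between the level-$m$ and level-$(m+N)$ tubes of $D$, and the colimits agree. The paper phrases this as follows: Frobenius is an automorphism of the convergent stack of a semiperfect scheme (Lemma \ref{FrobeniusIso:l}), and $F^N$ factors through the isogeny $R\to D$. This is presumably what your $N$ was meant to encode, but it must be formulated for the isogeny, not for $I$ itself.

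Second, you need the finite generators. Let $a_1,\dots,a_r$ generate $\Ker(D^\flat\to D)$. Then $J^{(m)}=(p,[a_1]^{p^m},\dots,[a_r]^{p^m})$, so each level is the rational localization $\Spa(W(D^\flat)[\tfrac1p],W(D^\flat))\bigl(\tfrac{[a_1]^{p^m},\dots,[a_r]^{p^m}}{p}\bigr)$. This is preperfectoid and sheafy by \cite[Thm.~3.7.4]{KL15} (Proposition \ref{Perfectoid:p}). The levels form an increasing chain of rational subsets, so their union is a genuine preperfectoid space.

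With these two steps added, your Zariski gluing and functoriality arguments go through as you describe.
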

	This double nature offers a geometric
bridge between flat connections and $p$-adic periods, in the spirit of
Fontaine's theory. The relevant ring of periods, in our case, is the ring $\mathbb{A}_{\mathrm{conv}}^{(m)}$ which is a variant of $\mathbb{A}_\mr{cris}$ (see Construction \ref{Aconv:c}).

	\subsection{Further directions}
The crystalline stack of \cite{Dri22} was used in the proof of the Hecke
orbit conjecture to associate Dieudonné vector bundles to the universal
$p$-divisible groups of Shimura varieties \cite{DvH22}.  For
this application, it was important to assume that the deformation
spaces were formally smooth, so that crystalline cohomology had good properties
and one could work with equivariant vector bundles.  The convergent stack is designed to handle non-smooth
situations. We expect that it will help in understanding $p$-divisible
groups over more general deformation spaces.  A further goal is the
construction of a moduli space of convergent isocrystals. With the
stacky point of view, this reduces to constructing a moduli space of
vector bundles over the convergent stack.

\spa

The material of this article was originally part of \cite{EdgedCrystalline}. We defer the extension of the results to rigid cohomology and overconvergent isocrystals to [\textit{ibid.}]. The prismatic analogues will appear instead in \cite{EdgedPrismatic}.
\subsection*{Overview of the paper}

In \S\ref{sec:conv-revisited} we introduce the convergent stack in the relative setting
(Definition \ref{PEtale}) and develop its basic properties: change of
level and \'etale descent. We also define the category of convergent isocrystals in \S\ref{ConvergentIsocrystals:s}.
In \S\ref{sec:explicit} we give concrete descriptions of the
convergent stack.  We prove that when $X$ embeds into a smooth $p$-adic formal
scheme, the stack is presented as the quotient of a $p$-adic tube by a formal groupoid (Theorem \ref{Quotient:t}), and that for
$f$-semiperfect schemes it is representable by a
preperfectoid space (Theorem \ref{ConvStackFunctor}).  In
\S\ref{sec:computing} we compute convergent cohomology.  After
proving a convergent Poincar\'e lemma (Lemma
\ref{PoincareLemmaConvergent:l}) and a vanishing of higher limits
(Proposition \ref{MLZero:p}), we obtain comparison isomorphisms
between the cohomology of the convergent stack and the de Rham
cohomology of the tube (Theorems \ref{convDeRhamComparison1:t},
\ref{convDeRhamComparison2:t} and Corollary
\ref{GlobalconvDeRhamComparison:c}).
\subsection*{Acknowledgments}

This project began after a conversation with Kiran Kedlaya about the moduli space of $F$-isocrystals, following his talk on crystalline companions (see \cite{KedlayaCompanionsII}) at the conference ``$p$-adic cohomology and arithmetic geometry" in Sendai (2022). I would like to thank him for the stimulating discussion and the organisers for creating such a nice environment. I also thank Bhargav Bhatt, Léo Navarro Chafloque, and Peter Scholze for their valuable comments on a first draft.

\spa

The author was funded by the Deutsche Forschungsgemeinschaft (project ID: 461915680), the Marie Skłodowska-Curie Actions (project ID: 101068237), and the Agence Nationale de la Recherche (project ID: ANR-25-CE40-7869-01).

\section{Convergent cohomology revisited}
\label{sec:conv-revisited}
This section sets up the foundational framework. For a $p$-complete $p$-torsion free base ring $A$ and a scheme $X$ over $A/p$, we define the convergent
stack of $X$ over $A$ of level $m$ for every $m\geq 0$. In this way, we drop the
finite type hypothesis of Ogus' convergent site \cite{OgusCT}. We then study the
behaviour of the convergent stack under change of level (Lemma \ref{LimitModules:l}), prove étale
descent in $X$ (Proposition \ref{EtaleDescent:p}), introduce the coefficient
objects, and record a stacky form of Dwork's trick
(Lemma \ref{DworkTrick:l}).
\subsection{First definitions}
The convergent stack is built out of the operation $B\mapsto B_{[m]}$, which
kills the elements whose $p^m$-th power is divisible by $p$. This is the
algebraic shadow of passing to a $p$-adic neighbourhood of radius $p^{-1/p^m}$,
and letting $m$ grow recovers the full convergent geometry.
\begin{defi} \label{PEtale} For a $p$-complete ring $A$, we write $N_m(A)\subseteq A$ for the ideal of elements $x\in A$ such that $x^{p^m}\in (p)$. Equivalently, $N_m(A)$ is the kernel of the composition $$A\to A/p\xrightarrow{F^m}A/p.$$ We denote by $A_{[m]}\subseteq A/p$ the quotient $A/N_m(A)$. For an ideal $I\subseteq A$ containing $p$, we write $I^{(m)}$ for the ideal generated by $p$ and the elements $x^{p^m}$ with $x\in I$. A morphism of $p$-complete rings $A\to B$ is \textit{$p$-completely étale} if $A/p\to B/p$ is étale and $\mr{Tor}^1_A(B,A/p)=0$. 
\end{defi}

\begin{lemm}\label{RadicalPCompletelyEtale:l}
	If $A\to B$ is a morphism of $p$-complete rings with $A/p\to B/p$ étale, then $$\Ker(A/p\xrightarrow{F^m} A/p)\otimes_{A/p} B/p = \Ker(B/p\xrightarrow{F^m} B/p)$$ for every $m\geq 0$. In particular, $B_{[m]}=A_{[m]}\otimes_A B$ for every $m\geq 0$.
\end{lemm}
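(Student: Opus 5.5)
The key input is that for an étale morphism of $\Fp$-algebras $R\to S$ (here $R=A/p$, $S=B/p$) the relative Frobenius is an isomorphism. That is, the square
$$\begin{tikzcd} R \arrow[r,"F"] \arrow[d] & R \arrow[d] \\ S \arrow[r,"F"] & S \end{tikzcd}$$
is cocartesian, so $S\otimes_{R,F}R\iso S$ via $s\otimes r\mapsto s^p r$. Iterating gives the same statement for $F^m$. With this, the first claim is a flat base change computation, and the second follows by reducing modulo $p$.

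For the first claim, consider the exact sequence of $R$-modules $0\to K_m\to R\xrightarrow{F^m}R$, where $K_m=\Ker(F^m)$ and the target $R$ is viewed as an $R$-module via $F^m$. Tensor it with $S$ along the structure map on the source, that is, along the unmodified copy of $R$. Since $S$ is flat over $R$ (étale implies flat), the sequence stays exact:
$$0\to K_m\otimes_R S\to S\to R\otimes_{R} S.$$
We need to identify the last map with $F^m\colon S\to S$. Write $F^m\colon R\to R$ as the composite $R\to R^{(m)}$, where $R^{(m)}$ is $R$ with $R$-structure through $F^m$. Then
$$(F^m\otimes \mr{id}_S)\colon S=R\otimes_R S\to R^{(m)}\otimes_R S .$$
A cleaner route is to view the map $F^m_R\colon R\to R$ as a ring map $\phi$ and use the cocartesian square: the map $S\to S$ obtained by base change of $\phi$ along $R\to S$, namely $S\to S\otimes_{R,\phi}R$, is identified with $F^m_S$ by the relative Frobenius isomorphism. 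The kernel of a base change of an $R$-linear map by the flat module $S$ is the base change of the kernel, provided the map is $R$-linear for the structure we tensor along. Here $F^m_R$ is linear as a map from $R$ with its usual structure to $R^{(m)}$, and tensoring on the source side yields $S\to R^{(m)}\otimes_R S\cong S\otimes_{R,F^m}R\cong S$. That last identification is exactly the relative Frobenius isomorphism, and it carries the map to $s\mapsto s^{p^m}$. This gives $K_m\otimes_R S=\Ker(F^m_S)$.

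For the second claim, note that $N_m(A)\supseteq pA$ and $N_m(A)/p=K_m$, so $A_{[m]}=R/K_m$. Since $p$ annihilates $A_{[m]}$, we have
$$A_{[m]}\otimes_A B=A_{[m]}\otimes_R S=(R/K_m)\otimes_R S=S/(K_m\otimes_R S),$$
using flatness of $S$ over $R$ for the last step. By the first claim this equals $S/\Ker(F^m_S)=B_{[m]}$. The hypothesis $\mr{Tor}_1$ vanishing is not needed here.

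The main point requiring care is the bookkeeping of module structures in the first step: which side one tensors along, and checking that the relative Frobenius isomorphism carries $F^m_R\otimes S$ to $F^m_S$. The input fact itself is the standard statement that étale maps have bijective relative Frobenius (see e.g. \stack{0EJ7}), which I will cite rather than reprove.
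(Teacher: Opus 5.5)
Your proposal is correct and takes essentially the same route as the paper. The paper also uses the cocartesian Frobenius square for the étale map $A/p\to B/p$ (via \stack{0F6W}) together with flatness to base change $\Ker(F^m)$, and then concludes $N_m(A)B=N_m(B)$; your extra bookkeeping (tensoring $0\to K_m\to R\to F^m_*R$ along the source and identifying $F^m_*R\otimes_R S\cong S$ so that the induced map is $s\mapsto s^{p^m}$) is right and spells out the step the paper leaves implicit.
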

\begin{proof}
	By \stack{0F6W}, we have the following cocartesian diagram \[
\begin{tikzcd}
    A/p \arrow[r, "F^m"] \arrow[d] &
    A/p \arrow[d] \\
    B/p \arrow[r, "F^m"] &
    B/p
\end{tikzcd}
\] for every $m\geq 0$. Since $A/p\to B/p$ is flat, we deduce that $$\Ker(A/p\xrightarrow{F^m} A/p)\otimes_{A/p} B/p = \Ker(B/p\xrightarrow{F^m} B/p).$$ This implies that the right hand side is generated as a $B$-module by the image of $\Ker(A/p\xrightarrow{F^m} A/p)$, thus $N_m(A)B=N_m(B)$, as we wanted. 
\end{proof}

\begin{defi} For a $p$-complete ring $A$, we write $\mr{Alg}^{+}_{A}$ for the subcategory of $p$-complete $p$-torsion free $A$-algebras. We denote by $\otimes^{+}_A$ the tensor product in $\mr{Alg}^{+}_{A}$, i.e. the $p$-adic completion of the usual tensor product followed by the quotient by the $p$-torsion. A \textit{$p$-completely étale covering} of $(\mr{Alg}^{+}_{A})^\mr{op}$ is a family of morphisms $(B_i\to B)_{i\in I}$ such that each $B\to B_i$ is $p$-completely étale and $\coprod_i \Spec(B_{i}/p) \to \Spec(B/p)$ is surjective. We denote by $\mr{Stk}_A^{+}$ the category of stacks in sets over $(\mr{Alg}^{+}_{A})^\mr{op}$ endowed with the $p$-completely étale topology. We denote by $\mr{FormSch}^{+}_A$ the category of $p$-adic formal schemes over $A$ that are flat over $\Zp$.
\end{defi}

\begin{defi}
	Let ${X}$ be a scheme over $A/p$. For $m\in \Z_{\geq 0}$, an \textit{enlargement of level $m$} of ${X}$ over $A$ is the datum of an $A$-algebra ${B}\in \mr{Alg}^{+}_A$ endowed with a morphism $\Spec({B}_{[m]})\to{X}$. A morphism of enlargements $B\to B'$ is a morphism of rings $B'\to B$  which induces a commutative diagram \[
\begin{tikzcd}
    \mr{Spec}(B_{[m]}) \arrow[r] \arrow[d] &
    X  \\
    \mr{Spec}(B_{[m]}'). \arrow[ur] &
\end{tikzcd}
\]  We write $(X/A)_{\conv}^{(m)}$ for the category of enlargements of level $m$ over $A$. This category is naturally fibred over $(\mr{Alg}^{+}_A)^\mr{op}$ via the functor sending an enlargement to the underlying $A$-algebra.
\end{defi}
\begin{lemm}
The fibred category $(X/A)_{\conv}^{(m)}\to (\mr{Alg}^{+}_A)^\mr{op}$ is a stack in sets with respect to the $p$-completely étale topology.
\end{lemm}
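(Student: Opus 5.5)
The fibred category $(X/A)^{(m)}_\conv$ is the category of elements of the presheaf $B\mapsto X(\Spec(B_{[m]}))$ on $(\mr{Alg}^+_A)^{\mr{op}}$. Since $X$ is a scheme, this presheaf factors through $B\mapsto B_{[m]}$, and it is a functor because any $A$-algebra map $B\to B'$ sends $N_m(B)$ into $N_m(B')$. So the fibres are discrete (sets), and the statement amounts to a sheaf property: for a $p$-completely étale covering $(B\to B_i)_{i\in I}$ in $\mr{Alg}^+_A$, the sequence
$$X(B_{[m]})\to \prod_i X((B_i)_{[m]})\rightrightarrows \prod_{i,j} X((B_i\otimes^+_B B_j)_{[m]})$$
must be an equalizer. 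I would prove this in three steps.

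\textbf{Step 1: identify the level-$m$ quotients.} By Lemma \ref{RadicalPCompletelyEtale:l}, $(B_i)_{[m]}=B_{[m]}\otimes_B B_i=B_{[m]}\otimes_{B/p}B_i/p$, so $B_{[m]}\to (B_i)_{[m]}$ is étale. The morphisms $B\to B_i\otimes^+_B B_j$ are again $p$-completely étale. To see this, reduce mod $p$. The quotient by $p$-torsion does not change the reduction, because the Tor-vanishing forces $B_i\hat\otimes_B B_j$ to be $p$-torsion free. Then $(B_i\otimes^+_B B_j)/p=B_i/p\otimes_{B/p}B_j/p$. Applying Lemma \ref{RadicalPCompletelyEtale:l} once more gives
$$(B_i\otimes^+_B B_j)_{[m]}=(B_i)_{[m]}\otimes_{B_{[m]}}(B_j)_{[m]}.$$

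\textbf{Step 2: reduce to fpqc descent for $X$.} The map $\coprod_i\Spec((B_i)_{[m]})\to \Spec(B_{[m]})$ is étale. It is surjective because it is the base change of the surjection $\coprod_i\Spec(B_i/p)\to\Spec(B/p)$ along the closed immersion $\Spec(B_{[m]})\hookrightarrow\Spec(B/p)$. So Step 1 turns the diagram above into the descent diagram of an étale covering of $\Spec(B_{[m]})$. Since a scheme is an fpqc (in particular étale) sheaf on schemes, the diagram is an equalizer. This is the gluing statement for morphisms into $X$.

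\textbf{Main obstacle.} The step I expect to need the most care is the claim in Step 1 that the fibre products $B_i\otimes^+_B B_j$ are formed correctly, i.e.\ that the coverings of $(\mr{Alg}^+_A)^{\mr{op}}$ are stable under base change and compute the right fibre products. Concretely, one must check that $B_i\hat\otimes_B B_j$ is already $p$-torsion free and that its reduction mod $p$ is $B_i/p\otimes_{B/p}B_j/p$. I would prove this from the $\mr{Tor}^1$-vanishing via derived $p$-completion: $B_j$ is $p$-completely flat over $B$, so the derived completed tensor product is concentrated in degree $0$ and its derived mod-$p$ reduction is the discrete ring $B_i/p\otimes_{B/p}B_j/p$. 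Hence it has no $p$-torsion.

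If arbitrary index sets cause trouble, I would first reduce to finite coverings. This works because $\Spec(B/p)$ is quasi-compact, which lets one refine the covering to a finite subfamily and then take the product to get a single $p$-completely étale cover.
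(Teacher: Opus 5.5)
Your proof is correct and follows the same route as the paper. Both reduce to the sheaf property of $B\mapsto X(B_{[m]})$, use Lemma \ref{RadicalPCompletelyEtale:l} to turn a $p$-completely étale covering of $B$ into an étale covering of $\Spec(B_{[m]})$, and conclude because $X$ is an étale sheaf. Your check that $(B_i\otimes^+_B B_j)_{[m]}=(B_i)_{[m]}\otimes_{B_{[m]}}(B_j)_{[m]}$ supplies a detail the paper leaves implicit, and the reduction to finite coverings is unnecessary, since representable functors satisfy descent for arbitrary étale (indeed fpqc) coverings.
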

\begin{proof}
	It is enough to prove that $B\mapsto X(B_{[m]})$ is a sheaf of sets for the $p$-completely étale topology. By Lemma \ref{RadicalPCompletelyEtale:l}, for every $p$-completely étale covering $(B_i\to B)_{i\in I}$ in $(\mr{Alg}_A^{+})^\mr{op}$ and every $m\geq 0$, the induced morphisms $((B_i)_{[m]}\to B_{[m]})_{i\in I}$ form an étale covering in $(\mr{Alg}_{(A/p)})^\mr{op}$. Thus the result follows from the fact that $X$ is a sheaf for the étale topology.
\end{proof}
\begin{defi}The stack $(X/A)_{\conv}^{(m)}$ is the \textit{convergent stack of level $m$} of $X$ over $A$. For $B\in (\mr{Alg}^{+}_A)^\mr{op}$, we write $(X/A)_{\conv}^{(m)}(B)$ for the fibre of $(X/A)_{\conv}^{(m)}$ over $B$. By construction, we have \begin{equation*} (X/A)_{\conv}^{(m)}(B)=X(B_{[m]}).\end{equation*}
	The stack $(X/A)_{\conv}^{(m)}$ is endowed with the topology induced by the $p$-completely étale topology on $(\mr{Alg}^{+}_A)^\mr{op}$. We say that the induced site is the \textit{convergent site of level $m$} of $X$ over $A$. 
We also write $\calO_\mr{conv}^+$ for the sheaf of $A$-algebras over $(X/A)_{\conv}^{(m)}$
	defined by $B\mapsto B$ and $\calO_\mr{conv}$ for the sheaf of $A[\tfrac{1}{p}]$-algebras defined by $B\mapsto B[\tfrac{1}{p}]$. 
\end{defi}

\subsection{Change of level}

For $m\leq m'$, the natural ring homomorphism $B_{[m]}\to B_{[m']}$ induces a $1$-morphism of stacks $$j\colon(X/A)_{\conv}^{(m)}\to (X/A)_{\conv}^{(m')}$$ and for $B\in \mr{Alg}_A^{+}$, we have $$j(B)\colon (X/A)_{\conv}^{(m)}(B)\hookrightarrow (X/A)_{\conv}^{(m')}(B).$$ 

\begin{defi}
	We denote by $(X/A)_{\conv}$ the $2$-colimit of the stacks $(X/A)_{\conv}^{(m)}$ along the morphisms $j\colon(X/A)_{\conv}^{(m)}\to (X/A)_{\conv}^{(m')}$ for $m\leq m'$. We write $j\colon(X/A)_{\conv}^{(m)}\to (X/A)_{\conv}$ for the induced morphism of stacks.
\end{defi}
By \stack{0796}, the functor $$j^*\colon \mr{Mod}((X/A)_{\conv},{\calO^+_\conv})\to \mr{Mod}((X/A)_{\conv}^{(m)},{\calO^+_\conv}),$$ admits a left adjoint $j_!$. For $\calM\in \mr{Mod}((X/A)_{\conv}^{(m)},{\calO^+_\conv})$, the sheaf $j_!\calM$ is the sheafification of the  presheaf $j_!^p$ sending $B\in  (X/A)_{\conv}$ to $\calM(B)$ if $B\in (X/A)_{\conv}^{(m)}$ and to $0$ otherwise. In particular, $j_!$ is exact and the counit $$j_!j^*\calM\hookrightarrow \calM$$ is injective.

\begin{lemm}\label{LimitModules:l}
	For $\calM\in \mr{Mod}((X/A)_{\conv},{\calO^+_\conv})$, if we write $\calM_m$ for the restriction of $\calM$ to $(X/A)_{\conv}^{(m)}$, then 
	$$R\Gamma((X/A)_{\conv}, \calM) = R\lim_m R\Gamma((X/A)_{\conv}^{(m)}, \calM_m).$$
\end{lemm}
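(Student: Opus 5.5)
The plan is to write $\calM$ as a homotopy colimit of the sheaves $j_!\calM_m$ and then compute $R\Gamma$ via $R\mathrm{Hom}$. For each $m$ write $j_m\colon(X/A)_\conv^{(m)}\to(X/A)_\conv$. By the discussion preceding the statement, $j_{m,!}$ is exact and left adjoint to $j_m^*$. Hence
$$R\mathrm{Hom}(j_{m,!}\calO^+_\conv,\calM)=R\mathrm{Hom}(\calO^+_\conv,Rj_m^*\calM)=R\Gamma((X/A)_\conv^{(m)},\calM_m),$$
since $j_m^*$ is restriction, which is exact and preserves injectives because it has an exact left adjoint. Likewise $R\Gamma((X/A)_\conv,\calM)=R\mathrm{Hom}(\calO^+_\conv,\calM)$. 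It therefore suffices to show that the natural map
$$\mathrm{hocolim}_m\, j_{m,!}j_m^*\calO^+_\conv\to\calO^+_\conv$$
is a quasi-isomorphism, and then to use that $R\mathrm{Hom}(-,\calM)$ turns homotopy colimits into $R\lim$.

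\textbf{Step 1: the colimit is exact.} The subobjects $j_{m,!}j_m^*\calO^+_\conv\hookrightarrow\calO^+_\conv$ form an increasing sequence, since the counits are injective. Their union is everything: any section over an object $B\in(X/A)_\conv$ comes from some level $m$, because $(X/A)_\conv$ is the colimit, and an object of the colimit factors through some $(X/A)_\conv^{(m)}$ (at least after refinement). So $\mathrm{colim}_m\, j_{m,!}\calO^+ = \calO^+$. A sequential colimit of an injective system is computed by the telescope
$$0\to\bigoplus_m j_{m,!}\calO^+\xrightarrow{1-\mathrm{shift}}\bigoplus_m j_{m,!}\calO^+\to\calO^+\to 0,$$
which is exact as a sequence of sheaves, since filtered colimits are exact in a Grothendieck topos.

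\textbf{Step 2: apply $R\mathrm{Hom}(-,\calM)$.} Applying it to the telescope sequence gives a distinguished triangle
$$R\Gamma((X/A)_\conv,\calM)\to\prod_m R\Gamma((X/A)_\conv^{(m)},\calM_m)\xrightarrow{1-\mathrm{res}}\prod_m R\Gamma((X/A)_\conv^{(m)},\calM_m).$$
This is precisely the defining triangle of $R\lim_m$. The transition maps must be identified with the restriction maps along $(X/A)_\conv^{(m)}\to(X/A)_\conv^{(m')}$. This follows by functoriality of the adjunction, since $j_m=j_{m'}\circ j$ with $j_!$ composing correspondingly.

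\textbf{Main obstacle.} The delicate point is making rigorous that $(X/A)_\conv$, defined as a $2$-colimit of stacks, has as its underlying site one in which every object lies in some level $m$. Equivalently, the sheafification of the presheaf colimit agrees with the filtered union of the $j_{m,!}$. The values $X(B_{[m]})$ form an increasing union and $\mathrm{colim}_m X(B_{[m]})$ is already the presheaf colimit. I would verify that this colimit sheafifies with respect to $p$-completely étale covers, using Lemma \ref{RadicalPCompletelyEtale:l} and working locally on finite covers (quasi-compactness of $\Spec(B/p)$). This identifies the site of $(X/A)_\conv$ with the union of the sites $(X/A)_\conv^{(m)}$, after which the union statement in Step 1 is automatic.
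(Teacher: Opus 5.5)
Your argument is correct in substance, but it takes a different route from the paper. The paper reduces to $\calM$ injective, so that each $\calM_m$ is injective because $j_!$ is exact. It then writes $\Gamma((X/A)_\conv,\calM)=\lim_m\Gamma((X/A)^{(m)}_\conv,\calM_m)$ and kills $R^1\lim$ by Mittag--Leffler. For this it deduces surjectivity of $\Hom(\calO^+_\conv,\calM)\to\Hom(j_!\calO^+_\conv,\calM)$ from the injectivity of $\calM$ and of the counit $j_!\calO^+_\conv\to\calO^+_\conv$. You instead apply $R\Hom(-,\calM)$ to the telescope resolution of $\calO^+_\conv$ by the $j_{m!}\calO^+_\conv$, which produces the triangle defining $R\lim$ directly. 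For injective $\calM$ this says that $1-\mathrm{shift}$ is surjective on $\prod_m\Gamma((X/A)^{(m)}_\conv,\calM_m)$, i.e. $\lim^1=0$, with no Mittag--Leffler input.

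The difference matters, because the injectivity of the counits fails in general. You also invoke it in Step 1, and it is asserted in the discussion before the lemma. The map $j(B)$ is restriction along the nilpotent thickening $\Spec(B_{[m']})\hookrightarrow\Spec(B_{[m]})$. For instance, with $X=\mathbb{A}^1_{\F_p}$, $A=\Zp$ and $B=\Zp[\sqrt{p}]$, it is $X(B_{[0]})=\F_p[\epsilon]/(\epsilon^2)\to X(B_{[1]})=\F_p$. So $j$ is not a monomorphism. Before sheafification, $(j_{m!}\calO^+_\conv)(B,\eta)=\bigoplus_\xi B$, the sum running over the level-$m$ lifts $\xi$ of $\eta$, and the counit is the sum map, which has nonzero kernel.

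You should therefore drop the language of an increasing union of subobjects and of an ``injective system''; neither is needed. The telescope $0\to\bigoplus_m j_{m!}\calO^+_\conv\to\bigoplus_m j_{m!}\calO^+_\conv\to\mr{colim}_m j_{m!}\calO^+_\conv\to 0$ is exact for any sequential system. The isomorphism $\mr{colim}_m j_{m!}\calO^+_\conv\iso\calO^+_\conv$ holds for the following reason. Colimits in the slice topos over $(X/A)_\conv$ are computed in the underlying topos, so the objects $(X/A)^{(m)}_\conv\to(X/A)_\conv$ have the terminal object as colimit. Moreover, $j_{m!}\calO^+_\conv$ is the free $\calO^+_\conv$-module on $(X/A)^{(m)}_\conv$, which is a left adjoint construction. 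Concretely, the presheaf colimit at $(B,\eta)$ is a sum over the fibre of $\mr{colim}_m X(B_{[m]})\to(X/A)_\conv(B)$, and this fibre is a point. Indeed, the presheaf colimit is already a sheaf, since coverings admit finite subcoverings and filtered colimits commute with finite limits; this also makes your ``main obstacle'' paragraph unnecessary.

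With this repair your proof is complete. It is also more robust than the paper's: the surjectivity used in the paper's Mittag--Leffler step genuinely needs the counit to be injective. To see this, take $\calM$ an injective containing $j_{m!}\calO^+_\conv$: the inclusion does not kill the kernel of the counit, so it does not come from $\Hom(\calO^+_\conv,\calM)$. Your telescope argument avoids this issue entirely.
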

\begin{proof}Since $\calO_\mr{conv}^+$ is flat over $\Z$, we may compute the cohomology groups in the category of $\calO_\mr{conv}^+$-modules.
	Thus it is enough to show the result when $\calM$ is injective. Since $$j_!\colon \mr{Mod}((X/A)^{(m)}_{\conv},{\calO^+_\conv}) \to \mr{Mod}((X/A)_{\conv},{\calO^+_\conv})$$ is exact, the modules $\calM_m$ are injective as well. Thus we have $R\Gamma((X/A)_{\conv}, \calM)=\Gamma((X/A)_{\conv}, \calM)$ and $R\Gamma((X/A)_{\conv}^{(m)}, \calM_m)=\Gamma((X/A)_{\conv}^{(m)}, \calM_m)$ for every $m\geq 0$. By definition, we have $$\Gamma((X/A)_{\conv}, \calM)=\lim_m \Gamma((X/A)_{\conv}^{(m)}, \calM_m),$$ thus it remains to show that $$R^1\mr{lim}_m\Gamma((X/A)_{\conv}^{(m)}, \calM_m)$$ vanishes. But $$\mr{Hom}(\calO^+_\mr{conv},\calM)\to \mr{Hom}(j_!\calO^+_\mr{conv},\calM) $$ is surjective because $\calM$ is an injective module and $j_!\calO_\mr{conv}^+\hookrightarrow \calO_\mr{conv}^+$ is injective.
\end{proof}

\subsection{Étale descent}

The construction $X\mapsto (X/A)_\conv^{(m)}$ is functorial: a morphism 
$f\colon Y\to X$ of schemes over $A/p$ defines a $1$-morphism of stacks 
$f_\conv\colon (Y/A)_\conv^{(m)}\to (X/A)_\conv^{(m)}$ sending 
$\xi\colon \Spec(B_{[m]})\to Y$ to $f\circ\xi$. We show that this 
construction satisfies étale descent in $X$.

\begin{lemm}\label{EtaleLifting:l}
For $B\in \mr{Alg}_A^{+}$ and $m\geq 0$, base change along 
$B\to B_{[m]}$ induces an equivalence
$$\{p\textrm{-completely étale }B\textrm{-algebras}\}
\xrightarrow{\sim}
\{\textrm{étale }B_{[m]}\textrm{-algebras}\},$$
under which $p$-completely étale coverings correspond to étale coverings.
\end{lemm}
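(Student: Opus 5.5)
Since $B$ is $p$-complete, the plan is to factor the functor as
$$\{p\text{-completely étale }B\text{-algebras}\}\to\{\text{étale }B/p\text{-algebras}\}\to\{\text{étale }B_{[m]}\text{-algebras}\},$$
the first arrow being $C\mapsto C/p$ and the second $D\mapsto D\otimes_{B/p}B_{[m]}$. By Lemma \ref{RadicalPCompletelyEtale:l}, the composite is indeed $C\mapsto C\otimes_B B_{[m]}=C_{[m]}$. I will show that each arrow is an equivalence.

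For the first arrow I will invoke the standard topological invariance of the étale site for henselian pairs. The pair $(B,pB)$ is henselian because $B$ is $p$-complete, so étale $B$-algebras modulo $p$ are equivalent to étale $B/p$-algebras (\stack{09ZL} and its companions). Then I pass to $p$-adic completions. A $p$-completely étale $B$-algebra $C$ is recovered as $\lim_n C/p^n$, and each $C/p^n$ is the unique étale lift of $C/p$ to $B/p^n$ by the infinitesimal invariance of the étale site. Conversely, given $D$ étale over $B/p$, I take these unique lifts $D_n$ over $B/p^n$ and set $C=\lim_n D_n$. The point to check is that $C$ is $p$-completely étale, i.e.\ that $\mr{Tor}_1^B(C,B/p)=0$. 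This should follow because each $D_n$ is flat over $B/p^n$, so $C/p^n=D_n$ and the system is $p$-completely flat. Full faithfulness also follows from infinitesimal lifting of morphisms, passing to the limit over $n$.

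For the second arrow, note that $B/p\to B_{[m]}$ is surjective with kernel $N_m(B)/p$. Every element $x$ of this ideal satisfies $x^{p^m}=0$ in $B/p$, so the kernel is a nil ideal. Étale algebras are insensitive to nil thickenings: $\Spec(B_{[m]})\to\Spec(B/p)$ is a universal homeomorphism, so \stack{04DZ} applies. Hence the second arrow is an equivalence. This step is where I expect the only subtlety: the kernel is nil but possibly not nilpotent, since it need not be finitely generated. That is why I cite topological invariance for universal homeomorphisms, or equivalently for henselian pairs, rather than for nilpotent thickenings. The pair $(B/p,N_m(B)/p)$ is henselian because its ideal is locally nilpotent.

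Finally, coverings. A family $(B\to B_i)$ is a $p$-completely étale covering iff $\coprod\Spec(B_i/p)\to\Spec(B/p)$ is surjective. Since $\Spec(B_{[m]})\to\Spec(B/p)$ is a homeomorphism and $(B_i)_{[m]}=B_i/p\otimes_{B/p}B_{[m]}$, this holds iff $\coprod\Spec((B_i)_{[m]})\to\Spec(B_{[m]})$ is surjective. That is exactly the condition for the images to form an étale covering.
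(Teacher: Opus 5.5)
Your proof is correct, but it is organised differently from the paper's. The paper works in a single step. It uses that $(B,N_m(B))$ is a henselian pair to lift an étale $B_{[m]}$-algebra directly to an étale $B$-algebra, then $p$-adically completes, and reads off the statement about coverings from Lemma~\ref{RadicalPCompletelyEtale:l}. You instead factor through $B/p$. You handle $B\to B/p$ by deformation theory along the nilpotent thickenings $B/p^n\to B/p$ followed by a limit. You handle $B/p\to B_{[m]}$ by topological invariance of the étale site for the nil, but possibly non-nilpotent, ideal $N_m(B)/p$.

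The paper's argument is shorter, but as written it has three soft spots:
\begin{itemize}
\item It only addresses essential surjectivity. A lift along a henselian pair is not unique; only its $p$-completion is.
\item It leaves full faithfulness implicit.
\item It needs $(B,N_m(B))$ to be henselian. This is best justified by combining ``$(B,pB)$ henselian'' with ``$N_m(B)/p$ locally nilpotent'', rather than by $N_m(B)$-adic completeness, which can fail when $N_m(B)$ is not finitely generated.
\end{itemize}
Your route makes full faithfulness and the $\mathrm{Tor}$-condition explicit, and it locates precisely where ``nil versus nilpotent'' matters.

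Two small corrections, neither affecting the argument. First, for a henselian pair such as $(B,pB)$, reduction is \emph{not} an equivalence between étale $B$-algebras and étale $B/p$-algebras. The henselian equivalence holds only for finite étale algebras; for general étale ones the functor is merely essentially surjective. For example, $\mathbb{Z}_p$ and $\mathbb{Z}_p\times\mathbb{Q}_p$ both reduce to $\mathbb{F}_p$. Likewise, in your second step, invoking a henselian pair is not equivalent to topological invariance. You never actually use these claims, since you work with the $B/p^n$ and with the nil ideal directly.

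Second, the step you mark as ``should follow'' is a one-liner. Suppose $x=(x_n)\in\lim_n D_n$ satisfies $px=0$. Flatness of $D_{n+1}$ over $B/p^{n+1}$, together with $p$-torsion-freeness of $B$, gives $x_{n+1}\in p^nD_{n+1}$, hence $x_n=0$ for all $n$. In the other direction, $\mathrm{Tor}_1^B(C,B/p)=C[p]=0$ makes $C/p^n$ flat over $B/p^n$ by the local flatness criterion. This flatness is what identifies $C/p^n$ with the unique étale lift of $C/p$.
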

\begin{proof}
Since $B$ is 
complete for the $N_m(B)$-adic topology, the pair $(B,N_m(B))$ is Henselian, and by \stack{09XI} every 
étale $B_{[m]}$-algebra lifts uniquely to an étale $B$-algebra. Its 
$p$-adic completion is a $p$-completely étale $B$-algebra. The compatibility with coverings is immediate from 
Lemma \ref{RadicalPCompletelyEtale:l}, which yields 
$B'_{[m]}=B_{[m]}\otimes_B B'$ for every $p$-completely étale morphism 
$B\to B'$.
\end{proof}

\begin{prop}\label{EtaleDescent:p}
Let $\{j_i\colon X_i\to X\}_{i\in I}$ be an étale covering of schemes 
over $A/p$ and, for $n\geq 0$, write 
$$X_n\coloneqq \coprod_{(i_0,\ldots,i_n)\in I^{n+1}} 
X_{i_0}\times_X\cdots\times_X X_{i_n}$$
for the associated Čech nerve. The natural $1$-morphism
$$\mr{colim}_{[n]\in\Delta^{\mr{op}}}(X_n/A)_{\conv}^{(m)}
\to (X/A)_{\conv}^{(m)}$$
is an isomorphism in $\mr{Stk}_A^{+}$. 
\end{prop}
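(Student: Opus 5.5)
The plan is to show that the morphism
$$\mr{colim}_{[n]\in\Delta^{\mr{op}}}(X_n/A)_{\conv}^{(m)}\to (X/A)_{\conv}^{(m)}$$
is both a monomorphism and an epimorphism of sheaves of sets on $(\mr{Alg}^{+}_A)^\mr{op}$. Since $(X/A)^{(m)}_\conv$ is a sheaf of sets, this identifies the colimit, computed in $\mr{Stk}_A^{+}$, with the target.

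The first step is a reduction to a Čech statement. For sheaves of sets, the colimit over $\Delta^{\mr{op}}$ of a simplicial object is the coequaliser of $F_1\rightrightarrows F_0$. I will check that $(X_\bullet/A)^{(m)}_\conv$ is the Čech nerve of $(X_0/A)^{(m)}_\conv\to (X/A)^{(m)}_\conv$. On $B$-points this reads
$$X_n(B_{[m]})=X_0(B_{[m]})\times_{X(B_{[m]})}\cdots\times_{X(B_{[m]})}X_0(B_{[m]}),$$
which holds because the functor of points commutes with fibre products and coproducts. For coproducts one must be careful: a $B_{[m]}$-point of $\coprod X_{i_0\ldots i_n}$ is only étale-locally a point of a single summand. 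So the identification holds after sheafification, which is harmless since we work with sheaves. It then suffices to prove that $(X_0/A)^{(m)}_\conv\to (X/A)^{(m)}_\conv$ is an effective epimorphism of sheaves. Indeed, for any epimorphism $U\to F$ of sheaves of sets, the coequaliser of $U\times_F U\rightrightarrows U$ is $F$.

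The main step is surjectivity, and here Lemma \ref{EtaleLifting:l} is the key input. Let $\xi\colon\Spec(B_{[m]})\to X$ be a point. Pulling back the étale covering $\{X_i\to X\}$ gives an étale covering $\{\Spec(C_{\alpha})\to \Spec(B_{[m]})\}$, where we may take affine opens and, refining, finitely many of them. By Lemma \ref{EtaleLifting:l}, each $C_\alpha$ lifts to a $p$-completely étale $B$-algebra $B_\alpha$ with $(B_\alpha)_{[m]}=C_\alpha$. The family $(B_\alpha\to B)$ is a $p$-completely étale covering, and here one uses Lemma \ref{RadicalPCompletelyEtale:l} together with the fact that $\Spec(B/p)$ and $\Spec(B_{[m]})$ share the same underlying space, since $N_m(B)/p$ is nil.

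Two membership checks remain. First, each $B_\alpha$ must be $p$-torsion free. It is $p$-completely flat over the $p$-torsion free ring $B$, hence $p$-torsion free; I expect this to follow from $\mr{Tor}_1(B_\alpha,B/p)=0$ after completion. If it does not, I would replace $B_\alpha$ by its quotient by $p$-torsion, which does not change $B_\alpha/p$ in the flat case. Second, the restriction of $\xi$ to each $(B_\alpha)_{[m]}=C_\alpha$ must factor through some $X_i$, which holds by construction. Hence $\xi$ lifts locally to $(X_0/A)^{(m)}_\conv$.

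The main obstacle I expect is this careful bookkeeping: that the lifted algebras lie in $\mr{Alg}^{+}_A$, and that the coproduct sheafification is compatible with the simplicial colimit. Both points are local, and Lemma \ref{EtaleLifting:l} is exactly the tool that makes $B\mapsto \Spec(B_{[m]})$ transport étale coverings downstairs to $p$-completely étale coverings upstairs.
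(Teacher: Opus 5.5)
Your proposal is correct and follows the paper's proof. The surjectivity step is exactly the paper's: pull back the covering along $\xi$, refine to affines, and lift via Lemma \ref{EtaleLifting:l}. Your injectivity step identifies $(X_\bullet/A)^{(m)}_\conv$ with the \v{C}ech nerve of $(X_0/A)^{(m)}_\conv\to(X/A)^{(m)}_\conv$ and uses that epimorphisms of sheaves are effective; this repackages the paper's pointwise check via $X_{i_1}\times_X X_{i_2}$.

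Both of your hedges are unnecessary:
\begin{itemize}
\item $X_n\simeq X_0\times_X\cdots\times_X X_0$ holds as schemes, so the \v{C}ech identification needs no sheafification.
\item Since $B$ is $p$-torsion free, $B_\alpha[p]=\mr{Tor}^1_B(B_\alpha,B/p)$, and this vanishes by the definition of $p$-completely \'etale.
\end{itemize}
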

\begin{proof}
Since the colimit of the simplicial diagram of 
stacks $(X_\bullet/A)_\conv^{(m)}$ is the sheafification of the colimit 
of the underlying simplicial diagram of presheaves of sets, it suffices 
to check that for every $B\in\mr{Alg}_A^{+}$ the natural map 
$$ 
\mr{colim}_{[n]\in\Delta^{\mr{op}}}X_n(B_{[m]})\to X(B_{[m]})$$
is injective and surjective up to a $p$-completely étale refinement 
of $B$.

\spa

For injectivity, suppose that $\xi_1\in X_{i_1}(B_{[m]})$ and 
$\xi_2\in X_{i_2}(B_{[m]})$ have the same image in $X(B_{[m]})$. Then 
the pair $(\xi_1,\xi_2)$ defines a $B_{[m]}$-point of 
$X_{i_1}\times_X X_{i_2}\subseteq X_1$ whose two faces are $\xi_1$ and 
$\xi_2$. Thus $\xi_1$ and $\xi_2$ are identified in the colimit.

\spa

For surjectivity, take $\xi\in X(B_{[m]})$. Pulling back 
$\{X_i\to X\}_{i\in I}$ along $\xi$ and refining by an affine Zariski 
covering produces an étale covering 
$$\{\Spec(C_k)\to \Spec(B_{[m]})\}_{k\in K}$$ with the property that for 
every $k\in K$ the morphism $\Spec(C_k)\to X$ factors through some 
$X_{i(k)}\to X$. By Lemma \ref{EtaleLifting:l}, this étale covering of 
$\Spec(B_{[m]})$ is the base change of a $p$-completely étale covering 
$\{B\to B_k\}_{k\in K}$ with $(B_k)_{[m]}=C_k$. The factorisation 
$\Spec((B_k)_{[m]})\to X_{i(k)}$ realises the restriction of $\xi$ to 
$(B_k)_{[m]}$ as the image of an element of $$X_{i(k)}((B_k)_{[m]})
\subseteq X_0((B_k)_{[m]}),$$ as required.
\end{proof}

\subsection{Convergent isocrystals and Dwork's trick}\label{ConvergentIsocrystals:s}
In this section, we introduce locally quasi-coherent sheaves and convergent isocrystals, and record Dwork's trick in our setting (Lemma \ref{DworkTrick:l}).
\begin{defi}
	 A sheaf of  $\calO_\conv$-modules $\calM$ over $(X/A)_{\conv}^{(m)}$ is \textit{locally quasi-coherent} if for every $p$-completely étale morphism $B_2\to B_1$ in $(X/A)_{\conv}^{(m)}$, the comparison morphism $$\calM(B_1){\otimes}_{B_1}B_2\to \calM(B_2)$$ is an isomorphism.
 A \textit{convergent isocrystal over ${X}$} is a sheaf of $\calO_\conv$-modules $\calM$ over $(X/A)_{\conv}$ such that for every morphism $B_2\to B_1$ in $(X/A)_{\conv}$, the comparison morphism $$\calM(B_1){\otimes}_{B_1}B_2\to \calM(B_2)$$ is an isomorphism.  We write $\mr{Isoc}_\mr{conv}({X}/A)$ for the category of convergent isocrystals over ${X}$ and we write $\mr{Isoc}_\mr{conv}^{\mr{fg}}({X}/A)$ for those isocrystals that are locally finitely generated.
\end{defi}

Dwork's trick can be reinterpreted as the fact that a Frobenius-invariant sheaf at finite level extends to the full convergent stack.

\begin{cons}\label{Dwork:c} Suppose that $A\in \mr{Alg}^+_{\Zp}$ is endowed with a Frobenius lift $\sigma\colon A\to A$. The absolute Frobenius morphism $F\colon X\to X$ induces a cocontinuous functor of sites $F\colon (X/A)_{\conv}^{(m)}\to (X/A)_{\conv}^{(m)}$ which sends $\Spec(B_{[m]})\to X$ to $$\Spec(B_{[m]})\xrightarrow{F} \Spec(B_{[m]})\to X.$$ The decomposition of $F\colon B_{[m]}\to B_{[m]}$ as the composition of the natural quotient $\pi\colon B_{[m]}\to B_{[m+1]}$ followed by $\varphi\colon B_{[m+1]}\to B_{[m]}$ induces a factorisation of $F\colon (X/A)_{\conv}^{(m)}\to (X/A)_{\conv}^{(m)}$ as the composition of cocontinuous functors $j\colon(X/A)_{\conv}^{(m)}\to (X/A)_{\conv}^{(m+1)}$ and $\varphi\colon(X/A)_{\conv}^{(m+1)}\to (X/A)_{\conv}^{(m)}$.
\end{cons}
\begin{lemm}[Dwork's trick]\label{DworkTrick:l}
	For $m\geq 0$, let $\calF$ be a sheaf over $(X/A)_{\conv}^{(m)}$ such that $F^*\calF\simeq \calF$. Then $\calF$ is the restriction of a sheaf over $(X/A)_{\conv}$.
\end{lemm}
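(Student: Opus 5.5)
The idea is to use the factorisation $F = \varphi\circ j$ of Construction \ref{Dwork:c} to extend $\calF$ one level at a time, and then pass to the colimit.

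\textbf{Extension by one level.} Set $\calF_m\coloneqq\calF$ and
$$\calF_{m+1}\coloneqq \varphi^*\calF \quad\text{on } (X/A)_{\conv}^{(m+1)},$$
where $\varphi\colon (X/A)_{\conv}^{(m+1)}\to (X/A)_{\conv}^{(m)}$ is the functor from Construction \ref{Dwork:c}. Concretely, for an object $\xi\colon\Spec(B_{[m+1]})\to X$, the sections are
$$\calF_{m+1}(\xi)=\calF\bigl(\Spec(B_{[m]})\xrightarrow{\varphi}\Spec(B_{[m+1]})\xrightarrow{\xi}X\bigr).$$
Since $F=\varphi\circ j$, we get
$$j^*\calF_{m+1}=j^*\varphi^*\calF=F^*\calF\simeq\calF.$$
So the chosen isomorphism $F^*\calF\simeq\calF$ gives $\calF_{m+1}$ together with an identification of its restriction to level $m$ with $\calF_m$.

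\textbf{Induction.} To continue, we need an isomorphism $F^*\calF_{m+1}\simeq\calF_{m+1}$ on level $m+1$. We take
$$F^*\varphi^*\calF=\varphi^*F^*\calF\simeq\varphi^*\calF,$$
using that the Frobenius endomorphism commutes with $\varphi$. This holds because both composites of functors send $\xi$ to $\xi\circ\varphi\circ F$, and Frobenius commutes with every ring map. Iterating gives a compatible system $(\calF_{m'})_{m'\ge m}$ with isomorphisms $j^*\calF_{m'+1}\simeq\calF_{m'}$. Below level $m$, we simply set $\calF_{m'}\coloneqq j^*\calF$.

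\textbf{Gluing to the colimit.} Because $(X/A)_{\conv}$ is the $2$-colimit of the stacks $(X/A)_{\conv}^{(m')}$, its underlying site is the union of these full subcategories. Indeed, every object $\Spec(B_{[m']})\to X$ lies at some finite level, and morphisms and coverings can be tested at a common level. Hence a sheaf on $(X/A)_{\conv}$ is the same thing as a compatible system of sheaves on the levels with isomorphisms satisfying the cocycle condition. Since everything is a stack in sets and the transition maps $j$ are inclusions, the cocycle condition reduces to compatibility of a linear chain of identifications, which holds automatically. Defining $\calF(\xi)\coloneqq\calF_{m'}(\xi)$ for $\xi$ of level $m'$ (and larger $m'$) therefore gives a well-defined sheaf on $(X/A)_{\conv}$ whose restriction to level $m$ is $\calF$.

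\textbf{Main obstacle.} The subtle point is checking that $\varphi^*$, the pullback along a cocontinuous functor, behaves well. One needs that $\varphi^*\calF$ is again a sheaf and that restricting along $j$ is given by the naive formula, with no sheafification needed. This holds because $\varphi$ is given by precomposition on the same algebra $B$, so it preserves $p$-completely étale coverings. The compatibility with coverings follows from Lemma \ref{RadicalPCompletelyEtale:l}, so sections are computed objectwise as above.
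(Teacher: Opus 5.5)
Your argument is the paper's: extend $\calF$ to level $m+1$ as $\varphi^*\calF$, use $F=\varphi\circ j$ to get $j^*\varphi^*\calF\simeq\calF$, and iterate. Your check that $\varphi\circ F=F\circ\varphi$ as functors $(X/A)^{(m+1)}_\conv\to(X/A)^{(m)}_\conv$ is correct: both precompose with the map induced by $x\mapsto x^{p^2}$ from $B_{[m+1]}$ to $B_{[m]}$. It gives $F^*\varphi^*\calF\simeq\varphi^*\calF$, which is exactly what the paper's ``iterating this process'' leaves implicit. Your observation that $\varphi^*$ and $j^*$ are computed objectwise is also correct.

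The gluing step, however, relies on a false claim. The maps $j(B)\colon X(B_{[m]})\to X(B_{[m+1]})$ are induced by the quotient $B_{[m]}\twoheadrightarrow B_{[m+1]}$, and they need not be injective, despite the $\hookrightarrow$ written in the paper before the definition of $(X/A)_\conv$. For $X=\mathbb{A}^1_{\F_p}$ and $B=\Zp[p^{1/p}]$ one has $N_0(B)=(p)$ and $N_1(B)=(p^{1/p})$, so $j(B)$ is the surjection $B/p\to B/p^{1/p}=\F_p$. Hence the levels are not full subcategories of $(X/A)_\conv$. Your definition $\calF(\xi)\coloneqq\calF_{m'}(\xi)$ is therefore not well defined when $\xi$ has several representatives at level $m'$.

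The conclusion still holds. Filtered colimits in the topos $\mr{Stk}^+_A$ are universal and commute with fibre products. Hence restriction along the $j$'s is an equivalence between sheaves over $(X/A)_\conv=\mr{colim}_{m'}(X/A)^{(m')}_\conv$ and sequences $(\calF_{m'})$ equipped with isomorphisms $j^*\calF_{m'+1}\simeq\calF_{m'}$; for a linear chain no cocycle condition is needed. Concretely, the glued sheaf sends $(B,\xi)$ to the colimit of $\calF_{m'}(B,\xi_{m'})$ over all representatives $\xi_{m'}$ of $\xi$. Any two representatives agree at a higher level, since the colimit is computed objectwise: $p$-completely étale coverings admit finite subcoverings. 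This index diagram is filtered and all its maps are isomorphisms, so each term maps isomorphically to the colimit. In particular the restriction of the glued sheaf to level $m$ is $\calF$.
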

\begin{proof}
	Using the decomposition $F=\varphi\circ j$, we have $$j^*\varphi^*\calF\simeq F^*\calF\simeq \calF.$$ Therefore, $\calF$ is the restriction of the sheaf $\varphi^*\calF$, defined over $(X/A)_{\conv}^{(m+1)}$. Iterating this process we get the desired result. 
\end{proof}

\section{Explicit descriptions}
\label{sec:explicit}
This section provides geometric models for the convergent stack in two regimes.
When $X$ embeds in a smooth $p$-adic formal scheme, the stack is a quotient of a
$p$-adic open tube by a formal groupoid (Theorem \ref{Quotient:t}), in analogy
with Drinfeld's presentation of the crystalline stack \cite{Dri22}. For
$f$-semiperfect schemes, the associated adic convergent stack is representable
by a preperfectoid space (Theorem \ref{ConvStackFunctor}). Along the way we
compare the convergent site with its topologically finitely generated
subcategory, which we will need in
\S\ref{sec:computing}.
\subsection{Tubes}
To make the convergent stack tangible we embed $X$ into a smooth $p$-adic formal scheme
and look at its $p$-adic closed tubes of radii $p^{-1/p^m}$.
\begin{cons}\label{Tubes:c}
	For a $p$-torsion free ring $B$ and an ideal $I\subseteq B$, we write $E^+_{B,m}(I)$ for the $p$-adic completion of $$B\left[\tfrac{(I,p)^{(m)}}{p}\right]$$ and $E_{B,m}(I)$ for $E^+_{B,m}(I)[\tfrac{1}{p}]$. We also denote by  $E_{B,\infty}(I)$ the limit  $\lim_m E_{B,m}(I)$ and by   $\Omega^\bullet_{E_{B,\infty}(I)/A}$ the limit of complexes $\lim_m \Omega^\bullet_{E^+_{B,m}(I)/A}[\tfrac{1}{p}]$. We often write $E_m$ and $E_\infty$ for  $E_{B,m}(I)$ and $E_{B,\infty}(I)$  when $B$ and $I$ are clear from the context.

\spa
  Let $A\in \mr{Alg}_{\Zp}^{+}$, let $\mathfrak{Y}\in \mr{FormSch}^{+}_A$, and let $X\hookrightarrow \mathfrak{Y}\otimes_{\Zp}\Fp$ be a closed subscheme over $A/p$. For any $m\geq 0$ we construct the tube $[X]^{(m)}\in \mr{FormSch}^{+}_A$ of level $m$ as follows. Working Zariski locally on $\mathfrak{Y}$, we may assume $\mathfrak{Y}=\Spf(\tilde{C})$ for some $\tilde{C}\in \mr{Alg}_A^{+}$ and that $X$ is defined by an ideal $I\subseteq \tilde{C}$ containing~$p$. We then define
  \[
    [X]^{(m)} \coloneqq\Spf\left(E^+_{\tilde{C},m}(I)\right).
  \]
\end{cons}
\begin{lemm}\label{Covering:l}
	In the situation of Construction~\ref{Tubes:c}, the tube $[X]^{(m)}$ represents the functor which assigns $B\in \mr{Alg}^{+}_A$ to the set of morphisms $\Spf(B)\to \mathfrak{Y}$ that carry $\Spec(B_{[m]})$ into $X$. In particular, there is a natural $1$-morphism $\pi\colon [X]^{(m)}\to (X/A)_\mr{conv}^{(m)}$ in $\mr{Stk}_A^{+}$.
\end{lemm}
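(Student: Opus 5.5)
The plan is to reduce to the affine case and then check the universal property by hand. Both the functor in the statement and $\Spf(E^+_{\tilde{C},m}(I))$ are Zariski sheaves on $\mathfrak{Y}$, and the construction of $E^+$ is compatible with localisation $\tilde{C}\to \tilde{C}\{1/f\}$. So it suffices to treat $\mathfrak{Y}=\Spf(\tilde{C})$ with $X=\Spec(\tilde{C}/I)$, $p\in I$. In that case I will show that for every $B\in \mr{Alg}^+_A$,
\[
\Hom_A\bigl(E^+_{\tilde{C},m}(I),B\bigr)=\{g\colon \tilde{C}\to B \ :\ g(I)\subseteq N_m(B)\}.
\]
Here I use that a continuous map $\tilde{C}\to B$ is the same as a morphism $\Spf(B)\to\mathfrak{Y}$, since $B$ is $p$-complete. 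The condition that $\Spec(B_{[m]})\to \mathfrak{Y}$ factors through $X$ says exactly that the composite $\tilde{C}\to B\to B_{[m]}$ kills $I$, i.e.\ $g(I)\subseteq N_m(B)$.

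For the key equivalence, take $g\colon\tilde C\to B$. Then $g(I)\subseteq N_m(B)$ means $g(x)^{p^m}\in pB$ for all $x\in I$. Since $(I,p)=I$ and $g(p)=p$, this holds iff $g(J)\subseteq pB$, where $J=(I,p)^{(m)}=(p,\,x^{p^m}:x\in I)$.

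\emph{Universal property.} Suppose $g(J)\subseteq pB$. Since $B$ is $p$-torsion free, division by $p$ is unique, so $g$ extends uniquely to $\tilde{C}[J/p]\subseteq \tilde{C}[\tfrac1p]\to B$, sending $y/p$ to the unique $b$ with $pb=g(y)$. Precisely, $B\subseteq B[\tfrac1p]$ and $g[\tfrac1p]$ maps $\tilde C[J/p]$ into $B$. The extension then passes uniquely to the $p$-adic completion because $B$ is $p$-complete. Conversely, any map $E^+\to B$ restricts to some $g$ with $g(J)=p\cdot g(J/p)\subseteq pB$. Uniqueness of the extension uses that $\tilde{C}[J/p]$ is generated over $\tilde{C}$ by the elements $y/p$, together with torsion-freeness of $B$. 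I also need that $E^+$ itself lies in $\mr{Alg}^+_A$, i.e.\ is $p$-torsion free. This holds because $\tilde{C}[J/p]\subseteq\tilde{C}[\tfrac1p]$ is torsion free, and the $p$-adic completion of a $p$-torsion free ring is $p$-torsion free. If needed, I would instead take the torsion-free completion as in $\otimes^+$.

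For the final assertion, a $B$-point of $[X]^{(m)}$ is a map $\Spf(B)\to\mathfrak{Y}$ restricting to $\Spec(B_{[m]})\to X$. That restriction is an element of $X(B_{[m]})=(X/A)^{(m)}_\conv(B)$, naturally in $B$, and so defines $\pi$. Since $[X]^{(m)}$ is a $p$-adic formal scheme, its functor of points is a sheaf for the $p$-completely étale topology, so $\pi$ lives in $\mr{Stk}^+_A$.

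I expect the main obstacle to be the gluing step: the tube is defined chart by chart, and one must confirm that the local pieces agree on overlaps. The universal property handles this. Each local $\Spf(E^+)$ represents the restriction of the same globally defined functor, so the pieces glue canonically, by Yoneda and the Zariski-sheaf property, and the result is independent of the chosen presentation.
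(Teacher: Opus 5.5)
Your proposal is correct and follows essentially the same argument as the paper: reduce to $\mathfrak{Y}=\Spf(\tilde{C})$, identify $B$-points of $\Spf(E^+_{\tilde{C},m}(I))$ with maps $g\colon\tilde{C}\to B$ sending $I^{(m)}$ into $pB$ (using that $B$ is $p$-complete and $p$-torsion free), and observe that this is exactly the condition $g(I)\subseteq N_m(B)$, i.e.\ that $\Spec(B_{[m]})\to\mathfrak{Y}$ factors through $X$. You also spell out details the paper leaves implicit: unique division by $p$ in $B$, $p$-torsion-freeness of $E^+$, and compatibility of the local tubes with localisation so that they glue.
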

\begin{proof}
	We may assume $\mathfrak{Y}=\Spf(\tilde{C})$ and $X$ defined by an ideal $I\subseteq \tilde{C}$ containing $p$. Since $B$ is $p$-complete and $p$-torsion free, $[X]^{(m)}(B)$ is the set of morphisms $\tilde{C}\to B$ sending $I^{(m)}$ into $pB$. This is equivalent to the condition that $\Spec(B_{[m]})\to \Spf(\tilde{C})$ factors through $X$.
\end{proof}

\begin{lemm}\label{Limits:l}
	If $X=\Spec(C)$ is affine, then $(X/A)_{\conv}^{(m)}$ has all finite nonempty limits.
\end{lemm}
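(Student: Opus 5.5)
We regard $(X/A)_{\conv}^{(m)}$ as a category whose objects are pairs $(B,\xi)$, with $B\in\mr{Alg}^{+}_A$ and $\xi\colon C\to B_{[m]}$ an $A/p$-algebra map. A morphism $(B,\xi)\to(B',\xi')$ is a ring map $B'\to B$ compatible with $\xi,\xi'$. A finite nonempty limit in this category is therefore a finite nonempty colimit of $A$-algebras, taken in $\mr{Alg}^{+}_A$ and equipped with a compatible map from $C$ modulo $N_m$. It suffices to construct binary products and equalizers, since a finite nonempty limit can be built from these two. (A terminal object need not exist, which is why the statement says nonempty.)

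\emph{Products.} Given $(B_1,\xi_1)$ and $(B_2,\xi_2)$, choose a presentation $C=A/p[x_i]_{i}/J$, and let $\tilde C$ be the $p$-completed polynomial ring $A[x_i]^{\wedge}$. Let $I\subseteq\tilde C$ be the preimage of $J$, which contains $p$. Choose lifts of $\xi_1,\xi_2$ to maps $\tilde C\to B_1$ and $\tilde C\to B_2$; these exist because polynomial algebras are free. They give a map $\tilde C\cotimes_A\tilde C\to B_1\otimes^{+}_A B_2$. Form $D=B_1\otimes^{+}_AB_2$, let $K\subseteq D$ be the ideal generated by $\delta(x_i)=x_i\otimes1-1\otimes x_i$ and $p$, and set
\[
P\coloneqq E^+_{D,m}(K)\ \text{modulo $p$-torsion},
\]
a tube-type enlargement as in Construction~\ref{Tubes:c}.

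By Lemma~\ref{Covering:l} (or the same argument), a map $P\to B$ with $B\in\mr{Alg}^+_A$ is the same as a pair of maps $B_1\to B$, $B_2\to B$ such that the two induced maps $\tilde C\to B$ agree modulo $N_m(B)$. Since the original $\xi_1$ and $\xi_2$ both factor through $C$, the two maps $C\to B_{[m]}$ then coincide. Hence $P$ carries a well-defined map $C\to P_{[m]}$, and $(P,\xi)$ is the product. One point needs checking: $\xi$ is well defined, i.e.\ it kills $J$. This holds because $I$ maps into $N_m(B_1)$, hence into $N_m(P)$, through either factor.

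\emph{Equalizers.} Given parallel morphisms $f,g\colon (B,\xi)\rightrightarrows(B',\xi')$, that is, ring maps $f,g\colon B'\to B$, we need their coequalizer among enlargements. Take $D=B$ and let $K$ be the ideal generated by $p$ and the elements $f(b)-g(b)$ for $b\in B'$. Then $E^+_{B,m}(K)$ modulo $p$-torsion is universal among maps $B\to B''$ in $\mr{Alg}^{+}_A$ under which $f(b)-g(b)$ lands in $N_m(B'')$.

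\emph{Main obstacle.} A morphism must satisfy $f=g$ exactly, not merely modulo $N_m$. So we must further impose $f(b)=g(b)$, forming the quotient by the closed ideal generated by the $f(b)-g(b)$, then $p$-completing and killing $p$-torsion. Call the result $Q$.

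It remains to check that the structure map $C\to Q_{[m]}$ exists and is compatible. Taking $C\to B_{[m]}\to Q_{[m]}$ works, and both composites with $f$ and $g$ agree because $\xi=\xi'\circ f=\xi'\circ g$ already holds.

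The delicate part is the product: we must verify that imposing "agreement modulo $N_m$" is exactly captured by dividing the $p^m$-th powers of the generators $\delta(x_i)$ by $p$. This requires that $N_m(B)$ be an ideal in which sums of elements with $p^m$-th power in $(p)$ again have this property. That holds because $N_m(B)$ is the kernel of $F^m$ on $B/p$.
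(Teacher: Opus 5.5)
Your argument is correct, and your product is the paper's construction: the tube $E^+_{D,m}$ inside $D=B_1\otimes^+_A B_2$ of the ideal that forces the two structure maps to agree modulo $N_m$. The paper defines that ideal as the kernel onto the pushout $(B_1)_{[m]}\otimes_A(B_2)_{[m]}\otimes_{C\otimes_A C}C$ rather than through a presentation of $C$, which gives the same tube. You go further by constructing equalizers as coequalizers in $\mr{Alg}^+_A$, a step the paper leaves implicit; there, kill the $p^\infty$-torsion before $p$-completing rather than after, since the torsion-free quotient of a $p$-complete ring need not be $p$-complete.
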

\begin{proof}
	For $B_1,B_2\in (X/A)_{\conv}^{(m)}$ the fibre product is constructed as follows. Let $B$ be the tensor product $B_1 {\otimes}_A^{+} B_2$. Write $B/I$ for the pushout of the following diagram 
	\[
\begin{tikzcd}
    C\otimes_A C \arrow[r] \arrow[d] &
    C \arrow[d] \\
    (B_1)_{[m]}\otimes_A (B_2)_{[m]} \arrow[r] &
    B/I,
\end{tikzcd}\]
	where $C\otimes_A C\to C $ is the multiplication morphism. Then the natural morphism $E_{B,m}^+(I)\to{B}_{[m]}$ kills $I$ and induces by composition a morphism $C\to {B}_{[m]}$. The datum of $E_{B,m}^+(I)$ and $C\to (E_{B,m}^+(I))_{[m]}$ defines an enlargement of level $m$ over $A$ which is the fibre product of $B_1$ and $B_2$ in $(X/A)_{\conv}^{(m)}$.
\end{proof}

\begin{cons}
	\label{SimplicialTubes:c}
	Let $\mathfrak{Y}$ be a smooth formal scheme over $A$ and let $X\hookrightarrow \mathfrak{Y}\otimes_{\Zp} \Fp$ be a closed immersion. For every $m\geq 0$, we denote by $\mathfrak{X}_\bullet^{(m)}$ the simplicial formal scheme over $A$ obtained by taking the tube of level $m$ of the diagonal embedding of $X$ in $\mathfrak{Y}^{\bullet+1}$ (see Construction \ref{Tubes:c}).
	
\end{cons}

\begin{lemm}\label{FibreSquare:l} In the situation of Construction \ref{SimplicialTubes:c}, the following square of stacks
	\[
\begin{tikzcd}
    \mathfrak{X}_1^{(m)} \arrow[r,"d^1_1"] \arrow[d,"d^1_0"] &
    \mathfrak{X}_0^{(m)} \arrow[d,"\pi"] \\
    \mathfrak{X}_0^{(m)} \arrow[r,"\pi"] &
    (X/A)_\mr{conv}^{(m)},
\end{tikzcd}\] induced by the two projections $\mathfrak{Y}\times \mathfrak{Y} \to \mathfrak{Y}$, is strictly Cartesian. In addition, the double arrow \begin{equation}\label{Grp:eq}\mathfrak{X}_1^{(m)}\rightrightarrows \mathfrak{X}_0^{(m)},\end{equation}  is a groupoid in $\mr{FormSch}_A^{+}$.
\end{lemm}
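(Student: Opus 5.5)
The approach is to compute functors of points on $\mr{Alg}^+_A$ and apply Lemma \ref{Covering:l} to the diagonal embeddings $X\hookrightarrow \mathfrak{Y}^{n+1}$. Since fibre products of stacks in sets are computed objectwise, strict Cartesianness reduces to showing that for every $B\in \mr{Alg}^+_A$ the map
$$\mathfrak{X}_1^{(m)}(B)\to \mathfrak{X}_0^{(m)}(B)\times_{X(B_{[m]})}\mathfrak{X}_0^{(m)}(B)$$
is bijective. Here we use that $\mathfrak{X}^{(m)}_n$ represents a sheaf for the $p$-completely étale topology, which holds because formal schemes are fpqc-type sheaves. So no sheafification is needed on either side, and the fibre product of sheaves is the objectwise one.

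\textbf{Step 1 (the Cartesian square).} By Lemma \ref{Covering:l}, an element of $\mathfrak{X}_1^{(m)}(B)$ is a morphism $(g_1,g_2)\colon \Spf(B)\to \mathfrak{Y}\times\mathfrak{Y}$ such that $\Spec(B_{[m]})$ lands in the diagonal copy of $X$. We must check that this is the same as a pair $g_1,g_2\colon \Spf(B)\to\mathfrak{Y}$ with each $g_i|_{\Spec(B_{[m]})}$ factoring through $X$, and with the two induced points of $X(B_{[m]})$ equal.
\begin{itemize}
\item One direction is immediate by composing with the projections.
\item For the converse, if $g_1|_{B_{[m]}}=g_2|_{B_{[m]}}=\xi$ as maps to $X$, then $(g_1,g_2)|_{B_{[m]}}=\Delta\circ\xi$, which lands in the diagonal $X$.
\end{itemize}
Locally this is the statement that an ideal of $\tilde C\hat\otimes\tilde C$ cutting out $\Delta(X)$ is generated by $I\otimes 1$, $1\otimes I$ and the elements $c\otimes1-1\otimes c$. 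A morphism $\tilde C\hat\otimes\tilde C\to B$ kills this ideal modulo $N_m(B)$ exactly when both components kill $I$ and agree modulo $N_m(B)$. Since $B_{[m]}$ is reduced relative to Frobenius in the sense of Definition \ref{PEtale}, the condition ``sends the $(m)$-ideal into $pB$'' is equivalent to ``sends the ideal into $N_m(B)$''. The bijection is clearly natural in $B$ and compatible with $\pi$, $d_0^1$ and $d_1^1$.

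\textbf{Step 2 (the groupoid).} The same argument identifies $\mathfrak{X}_n^{(m)}$ with the $(n+1)$-fold fibre product $\mathfrak{X}_0^{(m)}\times_{(X/A)^{(m)}_\conv}\cdots\times_{(X/A)^{(m)}_\conv}\mathfrak{X}_0^{(m)}$. Consequently $\mathfrak{X}_\bullet^{(m)}$ is the Čech nerve of $\pi$, and in particular $\mathfrak{X}_2^{(m)}=\mathfrak{X}_1^{(m)}\times_{\mathfrak{X}_0^{(m)}}\mathfrak{X}_1^{(m)}$. The Čech nerve of any morphism of sheaves gives an equivalence-relation-type groupoid:
\begin{itemize}
\item composition is the projection $\mathfrak{X}_2\to\mathfrak{X}_1$ forgetting the middle factor;
\item the identity is the diagonal $\mathfrak{X}_0\to\mathfrak{X}_1$;
\item the inverse is the swap of the factors of $\mathfrak{Y}\times\mathfrak{Y}$.
\end{itemize}
All of these are induced by morphisms of the ambient products $\mathfrak{Y}^{n+1}$ preserving the diagonal $X$, so by the universal property in Lemma \ref{Covering:l} they are morphisms of formal schemes. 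Each $\mathfrak{X}_n^{(m)}$ lies in $\mr{FormSch}^+_A$ because $E^+_{\tilde C,m}(I)$ is the $p$-completion of a subring of $\tilde C[1/p]$, which is $p$-torsion free. The groupoid axioms then hold because they hold on $B$-points.

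\textbf{Main obstacle.} I expect the main obstacle to be the bookkeeping in Step 1 equating ``lands in $X$ modulo $N_m(B)$'' with the $(m)$-power condition. One also has to check that fibre products in $\mr{FormSch}^+_A$, which involve $p$-completion and killing $p$-torsion, agree with those of sheaves on $\mr{Alg}^+_A$. Both points reduce to the fact that we only ever evaluate on $p$-complete $p$-torsion free $B$.
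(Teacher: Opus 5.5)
Your proposal is correct and follows the paper's proof: you use Lemma \ref{Covering:l} to identify $\mathfrak{X}_1^{(m)}(B)$ with pairs of maps $\Spf(B)\to\mathfrak{Y}$ inducing the same point of $X(B_{[m]})$, conclude that the square is Cartesian objectwise, and read off the groupoid structure; your Čech-nerve description of $\mathfrak{X}_\bullet^{(m)}$ is the step the paper itself carries out later, in the proof of Theorem \ref{Quotient:t}. One small wording point: the fact that sending $I^{(m)}$ into $pB$ is equivalent to sending $I$ into $N_m(B)$ is simply the definition of $N_m(B)$ (and is already built into Lemma \ref{Covering:l}), not a ``reducedness relative to Frobenius'' property, which Definition \ref{PEtale} does not define.
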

\begin{proof} By Lemma \ref{Covering:l}, for $B\in \mr{Alg}_A^{+}$, the set $\mathfrak{X}_1^{(m)}(B)$ is the set of morphisms $\Spf(B)\to \mathfrak{Y}\times \mathfrak{Y}$ which send $\Spec(B_{[m]})$ into $X$. This corresponds to the set of pairs of morphisms $\Spf(B)\to \mathfrak{Y}$ which define the same morphism $\Spec(B_{[m]})\to X$.
In other words, the square of sets
	\[
\begin{tikzcd}
    \mathfrak{X}_1^{(m)}(B) \arrow[r] \arrow[d] &
    \mathfrak{X}_0^{(m)}(B) \arrow[d] \\
    \mathfrak{X}_0^{(m)}(B) \arrow[r] &
    (X/A)_\mr{conv}^{(m)}(B)
\end{tikzcd}\] is Cartesian. From this description, it is clear that the double arrow \eqref{Grp:eq} is a groupoid in $\mr{FormSch}_A^{+}$.
\end{proof}

\begin{theo}\label{Quotient:t}Let $\mathfrak{Y}\in \mr{FormSch}^{+}_A$ be a smooth formal scheme over $A$ and let $X\hookrightarrow \mathfrak{Y}\otimes_{\Zp} \Fp$ be a closed immersion. For every $m\geq 0$, there is a natural isomorphism $$(X/A)_\mr{conv}^{(m)} =\mathfrak{X}_0^{(m)}/\mathfrak{X}_1^{(m)},$$ where the quotient is taken in $\mr{Stk}_A^{+}$. In addition, we have $$(X/A)_\mr{conv}^{(m)} = \mr{colim}_{[n]\in \Delta^\mr{op}} \mathfrak{X}^{(m)}_n.$$ 
\end{theo}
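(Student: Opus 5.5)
The plan is to show that $\pi\colon \mathfrak{X}_0^{(m)}\to (X/A)_\mr{conv}^{(m)}$ from Lemma \ref{Covering:l} is an epimorphism of sheaves for the $p$-completely étale topology. Lemma \ref{FibreSquare:l} identifies $\mathfrak{X}_1^{(m)}$ with $\mathfrak{X}_0^{(m)}\times_{(X/A)_\mr{conv}^{(m)}}\mathfrak{X}_0^{(m)}$. Once $\pi$ is an epimorphism, $(X/A)_\mr{conv}^{(m)}$ is the coequaliser in sheaves of sets of $\mathfrak{X}_1^{(m)}\rightrightarrows\mathfrak{X}_0^{(m)}$, since in a topos an epimorphism is effective and is the coequaliser of its kernel pair. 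This gives the first isomorphism $(X/A)_\mr{conv}^{(m)}=\mathfrak{X}_0^{(m)}/\mathfrak{X}_1^{(m)}$.

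For the colimit statement, the same argument as in Lemma \ref{FibreSquare:l} identifies $\mathfrak{X}_n^{(m)}$ with the $(n+1)$-fold fibre product of $\mathfrak{X}_0^{(m)}$ over $(X/A)_\mr{conv}^{(m)}$. Indeed, a $B$-point of $\mathfrak{X}_n^{(m)}$ is an $(n+1)$-tuple of maps $\Spf(B)\to\mathfrak{Y}$ that induce the same morphism $\Spec(B_{[m]})\to X$. So $\mathfrak{X}_\bullet^{(m)}$ is the Čech nerve of $\pi$. In a topos of sheaves of sets, the colimit over $\Delta^\mr{op}$ of the Čech nerve of an epimorphism is its target: the colimit of a simplicial set-valued diagram is the coequaliser of $d_0,d_1$ on degree one. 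Hence the second formula follows from the first.

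The main step is surjectivity of $\pi$ locally. Fix $B\in\mr{Alg}_A^{+}$ and $\xi\colon\Spec(B_{[m]})\to X$. We must find a $p$-completely étale covering $\{B\to B_k\}$ and maps $\Spf(B_k)\to\mathfrak{Y}$ restricting to $\xi$ on $\Spec((B_k)_{[m]})$.

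First, work Zariski locally on $\mathfrak{Y}$. The open cover of $\mathfrak{Y}$ pulls back along $\xi$ to a Zariski cover of $\Spec(B_{[m]})$. Lemma \ref{EtaleLifting:l} lifts this cover to a $p$-completely étale (Zariski) cover of $B$. So we may assume $\mathfrak{Y}=\Spf(\tilde C)$ with $\tilde C$ smooth over $A$, and, after further localisation, étale over a restricted polynomial algebra $A\langle t_1,\dots,t_d\rangle$.

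The point $\xi$ gives a ring map $\tilde C\to B_{[m]}$, and we want to lift it along $B\to B_{[m]}$. Since $B$ is $p$-complete and $N_m(B)^{p^m}\subseteq pB$, the ring $B$ is $N_m(B)$-adically complete. Formal smoothness of $\tilde C$ over $A$ therefore gives a lift after successive square-zero extensions $B/N_m(B)^{k+1}\to B/N_m(B)^{k}$, using the $p$-adic version of the infinitesimal lifting property for smooth $p$-adic formal schemes. Concretely, we lift the images of the $t_i$ arbitrarily to $B$. This defines $A\langle t\rangle\to B$ by $p$-completeness. The étale part $A\langle t\rangle\to\tilde C$ then lifts uniquely by the henselian property of $(B,N_m(B))$ (\stack{09XI}).

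I expect this lifting to be the main obstacle. It needs care with $p$-adic completeness versus $N_m$-adic completeness, and with checking that the lifted map still sends $\Spec(B_{[m]})$ into $X$. The latter is automatic, since it reduces to $\xi$ modulo $N_m(B)$.
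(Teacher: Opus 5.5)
Your skeleton is the paper's. You prove surjectivity of $\pi\colon\mathfrak{X}_0^{(m)}\to(X/A)^{(m)}_\conv$, take the kernel pair from Lemma \ref{FibreSquare:l}, and identify $\mathfrak{X}^{(m)}_\bullet$ with the Čech nerve pointwise.

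The difference lies in the lifting step. The paper reduces to $\mathfrak{Y}=\Spf(\tilde C)$ and invokes smooth lifting (\stack{07K4}). This makes $\mathfrak{Y}(B)\to\mathfrak{Y}(B_{[m]})$ surjective for \emph{every} $B$, so $\mathfrak{X}_0^{(m)}(B)/\mathfrak{X}_1^{(m)}(B)=(X/A)^{(m)}_\conv(B)$ already holds on points, with no sheafification. That stronger form is what Proposition \ref{InfFirstColumn:p} uses when it asserts that $E_m^+$ is weakly final on the chaotic site.

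Your route goes through étale coordinates $A\langle t_1,\dots,t_d\rangle\to\tilde C$ and henselian lifting of sections. Since such coordinates exist only Zariski locally on $\tilde C$, it produces lifts only after a Zariski localisation of $B$. That suffices for the isomorphism of sheaves in the theorem, but not for weak finality. On the other hand, you treat the reduction to affine $\mathfrak{Y}$ explicitly (lifting the Zariski cover of $\Spec(B_{[m]})$ to $B$), which the paper leaves implicit.

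One claim needs correcting: $N_m(B)^{p^m}\subseteq pB$ is false. Only individual elements of $N_m(B)$ have $p^m$-th power in $(p)$; products of such elements need not lie in $(p)$. For example, take $p=2$, $m=1$ and $B$ the $2$-adic completion of $\Z[x,y,u,v]\otimes\Z_2$ modulo $(x^2-2u,\,y^2-2v)$. This $B$ is free over $\Z_2\langle u,v\rangle$ on $1,x,y,xy$, so $x,y\in N_1(B)$ but $xy\notin 2B$. Moreover, when $N_m(B)$ is not finitely generated, $B$ need not be $N_m(B)$-adically complete at all, so your first (square-zero along $N_m(B)$-powers) route is unjustified as written.

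The henselian property you actually use still holds. $(B,pB)$ is henselian because $B$ is $p$-complete, $N_m(B)/pB$ is a nil ideal of $B/p$, and henselian pairs compose. Alternatively, on an affine chart, first lift along $B/p\to B_{[m]}$, whose kernel is locally nilpotent, then successively along $B/p^{n+1}\to B/p^n$, using $p$-completeness of $\tilde C$ and $B$. This is the smooth-lifting route, and it gives the lift without localising $B$.
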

\begin{proof} We want to prove that $(X/A)_\mr{conv}^{(m)}$ is the quotient stack of $\mathfrak{X}_0^{(m)}$ by $\mathfrak{X}_1^{(m)}$ via the $1$-morphism of stacks $\pi\colon \mathfrak{X}_0^{(m)}\to (X/A)_\mr{conv}^{(m)}$ of Lemma \ref{Covering:l}. For this, we may assume $\mathfrak{Y}=\Spf(\tilde{C})$ with $\tilde{C}$ the $p$-adic completion of a smooth $A$-algebra. By \stack{07K4}, the morphism $\mathfrak{Y}(B)\to \mathfrak{Y}(B_{[m]})$ is surjective for every $B\in \mr{Alg}_A^{+}$. Thanks to Lemma \ref{Covering:l}, this implies that $\mathfrak{X}_0^{(m)}(B)\to (X/A)_\mr{conv}^{(m)}(B)$ is surjective. By Lemma \ref{FibreSquare:l}, we deduce that $$\mathfrak{X}_0^{(m)}(B)/\mathfrak{X}_1^{(m)}(B) = (X/A)^{(m)}_\mr{conv}(B).$$ It remains to show that $\mathfrak{X}_\bullet^{(m)}$ is the nerve of the groupoid \eqref{Grp:eq}. This can be checked directly at the level of $B$-points  using Lemma \ref{Covering:l} as in Lemma \ref{FibreSquare:l}.
\end{proof}

\subsection{Topologically finitely generated algebras}
In this section, we use the results of the previous section to compare the convergent site with its subsite of topologically finitely generated algebras. We work over a Noetherian base ring $A\in \mr{Alg}_{\Zp}^{+}$ and a finitely generated affine $A$-scheme $X=\Spec(C)$ with $pC=0$. 

\begin{defi}
	We denote by $\mr{Alg}_A^{+,\mr{fg}}$ the full subcategory of $\mr{Alg}_A^{+}$ of topologically finitely generated $A$-algebras. We also denote by $(X/A)_{\conv}^{(m),\mr{fg}}$ the full subcategory of $(X/A)_{\conv}^{(m)}$ of those enlargements with underlying $A$-algebra in  $\mr{Alg}_A^{+,\mr{fg}}$.
\end{defi}

\begin{lemm}\label{ComparisonSmallBig:l}
	The inclusion $(X/A)_{\conv}^{(m),\mr{fg}}\hookrightarrow (X/A)_{\conv}^{(m)}$ preserves finite nonempty limits and is both continuous and cocontinuous. In particular, the induced morphism of topoi
\[
\iota\colon \mr{Ab}((X/A)_{\conv}^{(m),\mr{fg}}) \to \mr{Ab}((X/A)_{\conv}^{(m)})
\]
has the property that $\iota^{-1}$ admits an exact left adjoint $\iota_!$ satisfying $\iota^{-1}\iota_!=\iota^{-1}\iota_*=\mr{id}$.
	\end{lemm}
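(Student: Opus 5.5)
I would prove the three properties of the inclusion $u\colon(X/A)_{\conv}^{(m),\mr{fg}}\hookrightarrow(X/A)_{\conv}^{(m)}$ one at a time, and then deduce the statement about topoi from the standard formalism (\stack{00XU} and the surrounding tags on cocontinuous functors).

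\emph{Finite nonempty limits.} By Lemma~\ref{Limits:l}, the fibre product of $B_1$ and $B_2$ is $E^+_{B,m}(I)$, where $B=B_1\otimes^+_A B_2$ and $I$ is the kernel of $B\to B/I$. If $B_1,B_2$ are topologically finitely generated over the Noetherian ring $A$, then $B_1\cotimes_A B_2$ is a quotient of a Tate algebra $A\langle T_1,\dots,T_r\rangle$, hence Noetherian; so is its quotient $B$ by the $p$-torsion. Because $C$ is finitely generated, $B/I$ is a finitely presented quotient of $B$, so $I$ is finitely generated, and then $(I,p)^{(m)}$ is finitely generated as well. This is clear after reducing modulo $p$, since the $p^m$-th powers of a generating set of $I$ generate $I^{(m)}$ modulo $p$: Frobenius is additive modulo $p$, but a sum $x+py$ needs care, and one checks that $(x+py)^{p^m}\equiv x^{p^m}\bmod p$. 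Consequently $B[(I,p)^{(m)}/p]$ is finitely generated over $B$, and its $p$-adic completion is topologically finitely generated over $A$. Equalizers are handled in the same way, and the terminal object is not needed since only nonempty limits are claimed.

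\emph{Continuity.} By Lemma~\ref{RadicalPCompletelyEtale:l} and Lemma~\ref{EtaleLifting:l}, coverings in both sites are $p$-completely étale coverings. A $p$-completely étale algebra over a topologically finitely generated $B$ is again topologically finitely generated, being the completion of an étale $B$-algebra. Hence coverings in the small site are coverings in the big one, and fibre products are preserved by the previous step. Together these give continuity.

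\emph{Cocontinuity.} Take $B$ in the small site and a covering $\{B\to B_i\}$ in the big site. Each $B_i$ is already topologically finitely generated by the remark above, so the covering itself lies in the small site. This proves cocontinuity.

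\emph{Consequences for topoi.} A functor that is both continuous and cocontinuous satisfies $u^{-1}=u^p$ on sheaves (\stack{00XU}). The resulting morphism of topoi has $\iota^{-1}$ given by restriction. Restriction admits the left adjoint $\iota_!$, obtained by sheafifying the left Kan extension, and the right adjoint $\iota_*$. Since $u$ preserves finite nonempty limits, the categories used to compute the Kan extension are cofiltered on connected components, which makes $\iota_!$ exact on abelian sheaves; this is the analogue of \stack{00XZ} and of the $j_!$ argument used before Lemma~\ref{LimitModules:l}. Finally, $u$ is fully faithful, so the unit and counit give $\iota^{-1}\iota_!=\mr{id}$ and $\iota^{-1}\iota_*=\mr{id}$ (\stack{00XT}).

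The step I expect to be the main obstacle is the exactness of $\iota_!$ for abelian sheaves. There I would invoke the Stacks Project criterion for continuous and cocontinuous functors that preserve fibre products, together with the observation that the comma categories $B/u$, for $B$ in the big site, are connected unions of cofiltered pieces.
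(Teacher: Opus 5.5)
Your proposal is correct and follows the paper's route: the paper deduces the topos-theoretic statements ($\iota_!$ exact, $\iota^{-1}\iota_!=\iota^{-1}\iota_*=\mr{id}$) from the first part via the Stacks Project results on fully faithful continuous and cocontinuous functors that preserve finite nonempty limits (\stack{04BH} and \stack{077I}), exactly as in your last paragraph. The paper leaves the first part implicit, and your verification fills it in correctly: Noetherianity of $B_1\cotimes_A B_2$ keeps the construction of Lemma~\ref{Limits:l} (and equalizers) topologically finitely generated, and $p$-completely étale algebras over topologically finitely generated ones are completions of étale algebras.
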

\begin{proof}
	The second part follows from the first part by \stack{04BH} and \stack{077I}.
\end{proof}

\begin{coro}\label{CohomologyComparison:c}
	For every sheaf of abelian groups $\calF$ over $(X/A)_{\conv}^{(m)}$, the natural morphism $$R\Gamma((X/A)_{\conv}^{(m)},\calF)\to R\Gamma((X/A)_{\conv}^{(m),\mr{fg}},\iota^{-1}\calF)$$ is a quasi-isomorphism.
\end{coro}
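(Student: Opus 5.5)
The plan is to deduce Corollary \ref{CohomologyComparison:c} formally from Lemma \ref{ComparisonSmallBig:l}, using the standard argument for continuous and cocontinuous functors between sites. Write $\calC=(X/A)_{\conv}^{(m)}$ and $\calC'=(X/A)_{\conv}^{(m),\mr{fg}}$. The global sections functor on $\calC$ is $\Hom(\mathbb{Z}_{\calC},-)$, where $\mathbb{Z}_{\calC}$ denotes the constant sheaf, and similarly on $\calC'$. We already know that $\iota^{-1}$ is exact. It therefore suffices to prove two things: first, that $\Gamma(\calC,\calF)=\Gamma(\calC',\iota^{-1}\calF)$ functorially in $\calF$; second, that $\iota^{-1}$ sends injective sheaves to $\Gamma(\calC',-)$-acyclic sheaves. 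Then the quasi-isomorphism follows by computing $R\Gamma(\calC,\calF)$ with an injective resolution $\calF\to \mathcal{I}^\bullet$ and applying $\iota^{-1}$.

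For the acyclicity, we use that $\iota^{-1}$ has the exact left adjoint $\iota_!$ from Lemma \ref{ComparisonSmallBig:l}. Because $\iota_!$ is exact, $\iota^{-1}$ preserves injectives: for an injective $\mathcal{I}$ we have $\Hom(-,\iota^{-1}\mathcal{I})=\Hom(\iota_!(-),\mathcal{I})$, and this is exact. Hence $\iota^{-1}\mathcal{I}^\bullet$ is an injective resolution of $\iota^{-1}\calF$.

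For global sections, the adjunction gives
$$\Gamma(\calC',\iota^{-1}\calF)=\Hom(\mathbb{Z}_{\calC'},\iota^{-1}\calF)=\Hom(\iota_!\mathbb{Z}_{\calC'},\calF).$$
So the key step is to show that the natural map $\iota_!\mathbb{Z}_{\calC'}\to\mathbb{Z}_{\calC}$ is an isomorphism. Since $\iota_!$ is computed as the sheafification of the left Kan extension, $(\iota_!\mathbb{Z}_{\calC'})(B)$ is the sheafified colimit of $\mathbb{Z}$ over the category of arrows $B\to B'$ in $\calC$ with $B'\in\calC'$; recall that arrows in $\calC$ are ring maps $B'\to B$. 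Its value is $\mathbb{Z}$ exactly when this category is nonempty and connected. This reduces the problem to two facts:
\begin{enumerate}
\item every enlargement $B$ maps to some topologically finitely generated enlargement;
\item any two such maps are connected by a zigzag.
\end{enumerate}

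For (1), choose a presentation $C=A[x_1,\dots,x_r]/J$. For $B$ with $\Spec(B_{[m]})\to X$, lift the images of the $x_i$ from $B_{[m]}$ to elements $b_i\in B$. This gives a map $\Spf(B)\to\widehat{\mathbb{A}}^r_A$ sending $\Spec(B_{[m]})$ into $X$. By Lemma \ref{Covering:l}, this map factors through the tube $[X]^{(m)}=\Spf(E^+_{\tilde C,m}(I))$, where $\tilde C$ is the $p$-completed polynomial ring and $I$ the ideal of $X$. Since $A$ is Noetherian, $E^+_{\tilde C,m}(I)$ is topologically finitely generated, and its quotient by $p$-torsion is still topologically finitely generated. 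This produces the required morphism $B\to[X]^{(m)}$ in $\calC$. For (2), connectedness follows because $\calC'$ is stable under the finite nonempty limits of Lemma \ref{Limits:l}, as Lemma \ref{ComparisonSmallBig:l} says the inclusion preserves them. Two maps $B\to B_1'$ and $B\to B_2'$ therefore factor through the fibre product $B_1'\times B_2'$, which lies in $\calC'$.

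The main obstacle is the nonemptiness statement (1). It is really the content hidden in the continuity and cocontinuity of Lemma \ref{ComparisonSmallBig:l}, and it rests on the Noetherian hypothesis to ensure the tube is topologically finitely generated. If it becomes awkward to carry out, the fallback is to cite the statement in the Stacks Project for continuous and cocontinuous functors preserving finite limits, where such an $\iota$ induces an equivalence on cohomology, or equivalently to apply \stack{04BH} together with $\iota^{-1}\iota_*=\mr{id}$.
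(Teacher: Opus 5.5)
Your proposal is correct and is essentially the paper's argument. Both reduce to injective $\calF$ because $\iota^{-1}$ is exact and preserves injectives (it has the exact left adjoint $\iota_!$). Both then compare global sections using the non-formal fact that every enlargement admits a morphism to the topologically finitely generated tube $E_m^+$ of an embedding of $X$ into a polynomial (or smooth) algebra, together with $\calC'$ containing the relevant products.

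The difference is only in packaging. The paper computes global sections on both sites as the equaliser $\calF(E_m^+)\rightrightarrows\calF(E_m^+\times E_m^+)$ via Theorem \ref{Quotient:t}. You instead show $\iota_!\mathbb{Z}_{\calC'}\cong\mathbb{Z}_{\calC}$ via nonempty, connected comma categories. Your stated fallback, a general Stacks Project statement for continuous cocontinuous functors, would not suffice on its own, because this weak-finality input cannot be derived formally.
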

\begin{proof} By Lemma \ref{ComparisonSmallBig:l}, the functor $\iota^{-1}$ is exact and preserves injectives. Thus it suffices to prove the result when $\calF$ is injective. Choose a smooth $A$-algebra $B$ with a surjection to $C$ and write $E_m^+ \coloneqq E_{B,m}^+(I)$. By Theorem \ref{Quotient:t}, the global sections of $\calF$ and $\iota^{-1}\calF$ are both computed by the equaliser $\calF(E_m^+)\rightrightarrows \calF(E_m^+\times E_m^+)$, so the result follows.
\end{proof}
\begin{cons}For $B\in \mr{Alg}_A^{+}$, we denote by ${\Omega}^1_{B/A}$ the $p$-adic completion of the module of Kähler differentials of $B$ over $A$. The assignment $B\mapsto {\Omega}^1_{B/A}[\tfrac{1}{p}]$ defines a presheaf ${\Omega}^1_\mr{conv}$ on $(X/A)_{\conv}^{(m)}$. We write ${\Omega}^i_{\mr{conv}}$ for the $i$-th exterior power of ${\Omega}^1_\mr{conv}$.
\end{cons}

\begin{lemm}\label{DifferentialsTauEdgedCrystals:l}
	The $\calO_\conv$-modules ${\Omega}^i_\conv$ are locally quasi-coherent over $(X/A)_{\conv}^{(m),\mr{fg}}$.
\end{lemm}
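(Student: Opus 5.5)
We must show that for every $p$-completely étale morphism $B_2\to B_1$ in $(X/A)_{\conv}^{(m),\mr{fg}}$ (so $B_1\to B_2$ is a $p$-completely étale map of topologically finitely generated $A$-algebras), the map
$$\Omega^i_{B_1/A}[\tfrac1p]\otimes_{B_1[\frac1p]}B_2[\tfrac1p]\to \Omega^i_{B_2/A}[\tfrac1p]$$
is an isomorphism. Since $\Omega^i$ is the $i$-th exterior power of $\Omega^1$ and exterior powers commute with base change, it suffices to treat $i=1$. The plan is to prove the stronger integral statement
$$\Omega^1_{B_1/A}\otimes_{B_1}B_2\simeq \Omega^1_{B_2/A}$$
(with $p$-adically completed differentials), and then invert $p$.

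The first step uses finiteness. Because $A$ is Noetherian and $B_1$ is a quotient of a restricted power series ring $A\langle T_1,\dots,T_n\rangle$, the ring $B_1$ is Noetherian. The completed module $\Omega^1_{B_1/A}$ is then a quotient of $\bigoplus B_1\,dT_j$, hence finitely generated. The same holds for $B_2$, which is topologically finitely generated by hypothesis. Moreover, $B_1\to B_2$ is flat: it is $p$-completely flat between Noetherian $p$-complete rings, so flatness follows from the local flatness criterion. Consequently $\Omega^1_{B_1/A}\otimes_{B_1}B_2$ is finitely generated, and therefore already $p$-adically complete. This reduces the claim to comparing two finitely generated $p$-complete $B_2$-modules.

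The second step compares them modulo $p$. By Nakayama's lemma for $p$-complete finitely generated modules, together with a torsion argument, it suffices to show the comparison map is an isomorphism modulo $p^k$ for every $k$. Modulo $p^k$, the completed differentials coincide with the ordinary ones, $\Omega^1_{(B_j/p^k)/(A/p^k)}$. The map $B_1/p^k\to B_2/p^k$ is étale, since it is a flat map that is étale modulo the nilpotent ideal $(p)$. For étale maps one has $\Omega_{B_1/p^k}\otimes B_2/p^k\simeq\Omega_{B_2/p^k}$, by the first fundamental exact sequence together with $\Omega_{(B_2/p^k)/(B_1/p^k)}=0$. Taking the limit over $k$ of these compatible isomorphisms gives the integral isomorphism, because both sides are $p$-complete and finitely generated.

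The main obstacle is bookkeeping with completions. One must check that $B_2/p^k$ is étale over $B_1/p^k$ (deformation invariance of étaleness plus flatness), and that $\lim_k$ of the mod-$p^k$ differentials equals the completed module on both sides. The latter is exactly where the topologically finitely generated hypothesis enters. Without it, the completed tensor product and the ordinary tensor product could differ, which explains why the statement is restricted to $(X/A)_{\conv}^{(m),\mr{fg}}$.
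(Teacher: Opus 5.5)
Your proof is correct and follows essentially the same route as the paper. Étaleness of $B_1/p^k\to B_2/p^k$ for all $k$ gives the isomorphism on $p$-adically completed differentials, and finiteness of $\Omega^1_{B_1/A}$ over the Noetherian (topologically finitely generated) rings lets you replace the completed tensor product by the ordinary one. One small correction: the first fundamental exact sequence together with $\Omega_{(B_2/p^k)/(B_1/p^k)}=0$ only yields surjectivity of $\Omega_{(B_1/p^k)/(A/p^k)}\otimes B_2/p^k\to\Omega_{(B_2/p^k)/(A/p^k)}$; injectivity comes from the standard fact that this sequence is split left exact for formally smooth, in particular étale, maps.
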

\begin{proof}For a $p$-completely étale morphism $B_1\to B_2$, the induced morphisms $B_1/p^n\to B_2/p^n$ are étale for every $n\geq 1$, thus the morphism ${\Omega}^1_{B_1/A}\cotimes_{B_1}B_2\to {\Omega}^1_{B_2/A}$ is an isomorphism. If $B_1\in (X/A)_{\conv}^{(m),\mr{fg}}$, the module ${\Omega}^1_{B_1/A}$ is finitely presented, which implies that ${\Omega}^1_{B_1/A}\otimes_{B_1}B_2={\Omega}^1_{B_1/A}\cotimes_{B_1}B_2.$
\end{proof}
\subsection{Semiperfect schemes}\label{SemiPerfect:s}
We focus now on the convergent stack of \textit{$f$-semiperfect schemes} (see Definition \ref{FSemiPerfect:d}).
\begin{defi}\label{AlgN:d}
	Let $\mr{Alg}^{+,N}_{\Zp}$ denote the full subcategory of $\mr{Alg}^{+}_{\Zp}$ consisting of those $R$ that are integrally closed in $R[\tfrac1p]$, endowed with the $p$-completely étale topology. For such an $R$, the pair $(R[\tfrac1p],R)$ is a $p$-complete Huber pair and we write $\Spa(R[\tfrac1p],R)$ for the associated adic space. For a scheme $X$ over $\F_p$ and $m\geq 0$, the convergent stack $(X/\Zp)_{\mr{conv}}^{(m)}$ defines by restriction a stack over $\mr{Alg}^{+,N}_{\Zp}$, which we denote by $X_{\mr{conv}}^{\mr{ad},(m)}$. We also write $X_{\mr{conv}}^{\mr{ad}}$ for the restriction of $(X/\Zp)_{\mr{conv}}$ to $\mr{Alg}^{+,N}_{\Zp}$.
\end{defi}

\begin{cons}\label{Aconv:c}For a ring $R$ of characteristic $p$, we write $R^\flat$ for the tilt of $R$, i.e., the inverse limit perfection of $R$. Let $C$ be a semiperfect ring and write $J$ for $$\Ker(W(C^\flat)\to C^\flat\to C).$$ For every $m\geq 0$, we define  $$\mathbb{A}_\mr{conv}^{(m)}(C)\coloneqq E^+_{W(C^\flat),m}(J).$$ In other words, $\Spf(\mathbb{A}_\mr{conv}^{(m)}(C))$ is the tube of level $m$ of the closed immersion $\Spec(C)\hookrightarrow \Spf(W(C^\flat))$, as defined in Construction \ref{Tubes:c}. We write $\mathbb{B}_\mr{conv}^{(m)}(C)$ for $\mathbb{A}_\mr{conv}^{(m)}(C)[\tfrac{1}{p}]$ and $\mathbb{A}_\mr{conv}^{(m)}(C)^N$ for the $p$-adic completion of the integral closure of $\mathbb{A}_\mr{conv}^{(m)}(C)$ in $\mathbb{B}_\mr{conv}^{(m)}(C)$.
\end{cons}
We recall the following definitions from \cite[\S 4]{SW13}.

\begin{defi}\label{FSemiPerfect:d}
	An \textit{isogeny} of semiperfect rings is a surjective morphism $C\twoheadrightarrow D$ whose kernel is killed by a power of the Frobenius. A semiperfect ring $C$ is \textit{$f$-semiperfect} if it is isogenous to a semiperfect ring $D$ such that  $\Ker(D^\flat\to D)$ is finitely generated. A scheme $X$ over $\F_p$ is \textit{$f$-semiperfect} if it admits a Zariski covering by spectra of $f$-semiperfect rings.
\end{defi}

\begin{lemm}\label{FrobeniusIso:l}
	If $X$ is a semiperfect scheme, the absolute Frobenius induces an isomorphism of stacks $F\colon (X/\Zp)_{\conv}\iso (X/\Zp)_{\conv}$.
\end{lemm}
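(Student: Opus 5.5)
The plan is to use Construction \ref{Dwork:c}, which gives on each level the factorisation $F=\varphi\circ j$, with $j\colon (X/\Zp)^{(m)}_\conv\to (X/\Zp)^{(m+1)}_\conv$ and $\varphi\colon (X/\Zp)^{(m+1)}_\conv\to (X/\Zp)^{(m)}_\conv$. Here $\varphi$ is induced by the map $B_{[m+1]}\to B_{[m]}$, $x\mapsto x^p$. Passing to the colimit over $m$, the maps $\varphi$ assemble into a morphism $\varphi\colon (X/\Zp)_\conv\to (X/\Zp)_\conv$ which raises the level by one. The two composites $\varphi\circ j$ and $j\circ\varphi$ are then the Frobenius on the colimit, the first on level $m$ and the second on level $m+1$. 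It will therefore suffice to show that $F$ itself, acting on each $(X/\Zp)^{(m)}_\conv(B)=X(B_{[m]})$, becomes invertible in the colimit. Since everything is a presheaf in sets, I would check this on points before sheafification.

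Fix $B\in\mr{Alg}^+_{\Zp}$. The map $F\colon B_{[m+1]}\to B_{[m]}$ is injective, because $x^p\in N_m(B)$ means $x^{p^{m+1}}\in(p)$, i.e. $x\in N_{m+1}(B)$. Its image is the subring of $p$-th powers. The key step is to exploit semiperfectness of $X$, which says that the absolute Frobenius $F_X\colon X\to X$ is a closed immersion. I would work Zariski locally with $X=\Spec(C)$, $C$ semiperfect, and then globalise, since both sides are Zariski sheaves in $X$ (compare Proposition \ref{EtaleDescent:p}).

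For injectivity in the colimit, take $\xi,\eta\colon C\to B_{[m]}$ with $\xi\circ F_C=\eta\circ F_C$. Since $F_C$ is surjective, this forces $\xi=\eta$ already at level $m$. For surjectivity, take $\xi\colon C\to B_{[m]}$. I want $\eta\colon C\to B_{[m+1]}$ whose composite with the quotient $B_{[m+1]}\to B_{[m]}$ recovers $\xi$ up to Frobenius. More precisely, the condition is that $\eta\circ F_C$ equals $\xi$ followed by the transition map. I would define $\eta(c^p)\coloneqq \pi(\xi(c))$, where $\pi\colon B_{[m]}\to B_{[m+1]}$ is the quotient; this uses semiperfectness to write every element of $C$ as a $p$-th power. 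Equivalently, writing $C=C^\flat/K$, the map $\eta$ is $\xi$ precomposed with the inverse Frobenius on $C^\flat$.

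Well-definedness is the expected main obstacle. If $c^p=d^p$, then $(c-d)^p=0$, and I need $\pi(\xi(c-d))=0$. We know $\xi(c-d)^p=0$ in $B_{[m]}$, i.e. a lift $y$ satisfies $y^{p^{m+1}}\in(p)$, which is exactly $y\in N_{m+1}(B)$, so $\pi(\xi(c-d))=0$. Thus $\eta$ is well-defined, and it is a ring map because Frobenius is additive in characteristic $p$. One then checks that $\eta\circ F_C=\pi\circ\xi$, so the inverse of $F$ on the colimit moves level $m$ to level $m+1$. Finally, the inverses on affine pieces are canonical, so they glue along Zariski covers, and they are compatible with the $p$-completely étale sheafification. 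This gives $F$ as an isomorphism of stacks $(X/\Zp)_\conv\iso(X/\Zp)_\conv$.
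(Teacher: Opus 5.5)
Your argument is correct and is essentially the paper's. The paper notes that $\varphi\colon B_{[m+1]}\hookrightarrow B_{[m]}$ is injective with image the $p$-th powers, so for semiperfect $C$ the map $\varphi\colon (X/\Zp)^{(m+1)}_\conv\to (X/\Zp)^{(m)}_\conv$ is already an isomorphism at each finite level; your $\eta$ is exactly $\varphi^{-1}\circ\xi$, so you check bijectivity of $F=\varphi\circ j$ on the colimit rather than bijectivity of $\varphi$ levelwise (one harmless slip: $\varphi$ lowers the level by one rather than raising it).
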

\begin{proof}We may assume that $X=\Spec(C)$ is affine. Thanks to Construction \ref{Dwork:c}, the absolute Frobenius on $(X/\Zp)^{(m)}_{\conv}$ factorises as the composition of the natural morphism $$j\colon (X/\Zp)^{(m)}_{\conv}\to (X/\Zp)^{(m+1)}_{\conv}$$ and a morphism $$\varphi\colon (X/\Zp)^{(m+1)}_{\conv}\to (X/\Zp)^{(m)}_{\conv}$$ induced by $\varphi\colon B_{[m+1]}\hookrightarrow B_{[m]}$, which sends $x\mapsto x^p$. Since $C$ is semiperfect, every morphism $C\to B_{[m]}$ factorises through $\varphi(B_{[m+1]})\subseteq B_{[m]}$. This shows that $$\varphi\colon (X/\Zp)^{(m+1)}_{\conv}\to (X/\Zp)^{(m)}_{\conv}$$ is an isomorphism of stacks. Passing to the colimit with respect to $m$, we deduce the desired result. 
\end{proof}
\begin{lemm}\label{TiltIsomorphism:l}
	For a $p$-complete ring $B$ and $m\geq 0$, the induced morphism $(B/p)^\flat\to (B_{[m]})^\flat$ is an isomorphism. In particular, for a perfect ring $R$, we have $$\Hom(W(R),B)=\Hom(R,B_{[m]}).$$
\end{lemm}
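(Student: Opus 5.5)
The plan is to reduce the first claim to a general fact about tilts: if $S\to S'$ is a surjection of $\F_p$-algebras whose kernel is killed by a power of Frobenius, then the induced map $S^\flat\to S'^\flat$ is an isomorphism. Here $S=B/p$, $S'=B_{[m]}=B/N_m(B)$, and the kernel is $N_m(B)/p$. By Definition \ref{PEtale}, this kernel is exactly $\Ker(F^m\colon B/p\to B/p)$, so it is killed by $F^m$. The second claim will then follow from the universal property of Witt vectors of a perfect ring, combined with the first claim.

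For the general fact I will use the description $S^\flat=\lim_{x\mapsto x^p}S$ of sequences $(x_0,x_1,\dots)$ with $x_{i+1}^p=x_i$.
\begin{itemize}
\item \emph{Injectivity.} Suppose $(x_i)$ maps to $0$ in $S'^\flat$. Then every $x_i$ lies in the kernel. Hence $x_i=x_{i+m}^{p^m}=F^m(x_{i+m})=0$ for all $i$.
\item \emph{Surjectivity.} Given $(y_i)\in S'^\flat$, choose arbitrary lifts $\tilde y_i\in S$ and set $x_i\coloneqq \tilde y_{i+m}^{p^m}$. This is independent of the choice of lift, since two lifts differ by an element $k$ of the kernel and $(\tilde y+k)^{p^m}=\tilde y^{p^m}+k^{p^m}=\tilde y^{p^m}$ in characteristic $p$. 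The same independence gives $x_{i+1}^p=\tilde y_{i+m+1}^{p^{m+1}}=x_i$, because $\tilde y_{i+m+1}^p$ is also a lift of $y_{i+m}$. Finally, $x_i$ maps to $y_{i+m}^{p^m}=y_i$.
\end{itemize}
This proves the first claim. Note that $B/p\to B_{[m]}$ is surjective, so nothing else is needed.

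For the second claim, let $R$ be perfect. Since $B$ is $p$-complete, the universal property of $W(R)$ gives $\Hom(W(R),B)=\Hom(R,B/p)$. Concretely, a map $R\to B/p$ factors uniquely through $(B/p)^\flat\to B/p$ via the first projection, and $W(R)\to W((B/p)^\flat)\xrightarrow{\theta} B$ is the corresponding Teichmüller-type construction. It may be cleaner to cite that for a $p$-complete $B$, $\Hom(W(R),B)=\Hom(R,(B/p)^\flat)$. Likewise $\Hom(R,B_{[m]})=\Hom(R,(B_{[m]})^\flat)$, since $R$ is perfect and any map from a perfect ring to an $\F_p$-algebra $S$ factors uniquely through $S^\flat$. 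Combining these with $(B/p)^\flat\cong(B_{[m]})^\flat$ yields $\Hom(W(R),B)=\Hom(R,B_{[m]})$.

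The only delicate point is the lift-independence computation in the surjectivity step, which uses that the kernel is killed by $F^m$. The rest is the standard universal property of Witt vectors for $p$-complete targets, which I would cite, for example from the Stacks Project or Fontaine's construction of $\theta$.
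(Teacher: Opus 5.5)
Your proof is correct and is essentially the paper's argument written out in coordinates. The paper notes that $x\mapsto x^{p^m}$ gives a well-defined map $\varphi\colon B_{[m]}\to B/p$ (this is your lift-independence computation), with $\varphi\circ\pi$ and $\pi\circ\varphi$ both equal to $F^m$; it then concludes by functoriality of $(-)^\flat$, on which Frobenius is invertible, so your explicit inverse is just $\varphi^\flat$ up to the index shift by $m$, and the second part is deduced from the universal property of Witt vectors exactly as you do.
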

\begin{proof}
	Let $\pi\colon B/p \twoheadrightarrow B_{[m]}$ be the natural quotient map, and let $\varphi\colon B_{[m]}\to B/p$ be the map sending the class of $x\in B$ to $x^{p^m}\bmod p$. Both the composition $\varphi\circ\pi$ and $\pi\circ\varphi$ are the $m$-th absolute Frobenius.
	Applying the tilt functor we deduce that both  $\varphi^\flat\circ\pi^\flat$ and $\pi^\flat\circ\varphi^\flat$ are isomorphisms, thus both $\pi^\flat$ and $\varphi^\flat$ are isomorphisms. The second part of the statement follows from the first part and the universal property of Witt vectors.
\end{proof}

\begin{prop}\label{SemiPerfectRep:p}
	For an affine semiperfect scheme $X=\Spec(C)$, the $p$-adic formal scheme $\Spf(\mathbb{A}_\mr{conv}^{(m)}(C))$ represents the functor $(X/\Zp)_{\mr{conv}}^{(m)}$.
\end{prop}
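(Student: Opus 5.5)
We need to show that, for every $B\in\mr{Alg}^{+}_{\Zp}$, the set $(X/\Zp)^{(m)}_\conv(B)=\Hom(C,B_{[m]})$ is naturally in bijection with $\Hom(\mathbb{A}_\mr{conv}^{(m)}(C),B)$. The plan is to pass through the Witt vectors of the tilt. By Lemma \ref{Covering:l}, applied to the closed immersion $\Spec(C)\hookrightarrow\Spf(W(C^\flat))$ with ideal $J$, the formal scheme $\Spf(\mathbb{A}_\mr{conv}^{(m)}(C))=\Spf(E^+_{W(C^\flat),m}(J))$ represents the functor sending $B$ to the set of ring maps $g\colon W(C^\flat)\to B$ with $g(J^{(m)})\subseteq pB$. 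Equivalently, these are the maps $g$ whose reduction $W(C^\flat)\to B\to B_{[m]}$ factors through $W(C^\flat)/J=C$.

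Lemma \ref{Covering:l} is stated for $\mathfrak{Y}$ a $p$-adic formal scheme, and $W(C^\flat)$ is $p$-complete and $p$-torsion free, so the argument there goes through verbatim. So there is a natural map
\[\Hom(\mathbb{A}_\mr{conv}^{(m)}(C),B)\to \Hom(C,B_{[m]}),\qquad g\mapsto \bar g,\]
and it remains to prove that it is bijective.

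\emph{Injectivity.} If two such $g,g'$ induce the same $\bar g=\bar g'$, then the composites $C^\flat\to C\to B_{[m]}$ agree. By Lemma \ref{TiltIsomorphism:l}, $\Hom(W(C^\flat),B)=\Hom(C^\flat,B_{[m]})$, where $g$ corresponds to its reduction $C^\flat\to B_{[m]}$. Hence $g=g'$.

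\emph{Surjectivity.} Given $h\colon C\to B_{[m]}$, compose with the surjection $C^\flat\to C$, which exists since $C$ is semiperfect. Lemma \ref{TiltIsomorphism:l} then gives a unique $g\colon W(C^\flat)\to B$ whose reduction to $B_{[m]}$ is $C^\flat\to C\xrightarrow{h}B_{[m]}$. This reduction kills $\Ker(C^\flat\to C)$. Every element of $J$ is of the form $[a]+p w$ with $a\in\Ker(C^\flat\to C)$, up to the Witt vector expansion. So $g(J)$ lands in $N_m(B)$, i.e.\ $g(x)^{p^m}\in pB$ for $x\in J$, which gives $g(J^{(m)})\subseteq pB$. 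Thus $g$ lies in the representable subfunctor and maps to $h$.

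Naturality in $B$ is clear, since all identifications are functorial. One could also note that the rings $B$ are only required to be $p$-complete and $p$-torsion free, which is exactly the generality of both lemmas.

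The main point to check is the compatibility in Lemma \ref{TiltIsomorphism:l}: the bijection $\Hom(W(R),B)=\Hom(R,B_{[m]})$ must be given by \emph{reduction} $W(R)\to B\to B/p\to B_{[m]}$, composed with $W(R)\to R$. This is what the proof of that lemma gives, via $B\to B/p$ and the universal property of $W$ together with $(B/p)^\flat\cong(B_{[m]})^\flat$. Once that is fixed, the condition ``the reduction factors through $C$'' matches the defining condition of the tube on the nose, and the proposition follows.
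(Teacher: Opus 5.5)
Your proposal is correct and follows essentially the same route as the paper: use Lemma \ref{Covering:l} to identify $\Spf(\mathbb{A}^{(m)}_\conv(C))(B)$ with maps $W(C^\flat)\to B$ sending $J$ into $N_m(B)$, then use the bijection of Lemma \ref{TiltIsomorphism:l} to match these with maps $C^\flat\to B_{[m]}$ that factor through $C$. Your explicit injectivity/surjectivity check, and your verification that the bijection is given by reduction, spell out what the paper leaves implicit; the $[a]+pw$ decomposition is correct but can be skipped, since $W(C^\flat)\to B\to B_{[m]}$ kills $J$ iff $C^\flat\to B_{[m]}$ kills $\Ker(C^\flat\to C)$.
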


\begin{proof}Let $\pi\colon \Spf(\mathbb{A}_\mr{conv}^{(m)}(C))\to (X/\Zp)_{\mr{conv}}^{(m)}$ be the $1$-morphism of stacks of Lemma \ref{Covering:l}. Thanks to Lemma \ref{TiltIsomorphism:l}, there is a natural bijection $$\Hom(W(C^\flat),B)=\Hom(C^\flat,B_{[m]}).$$ The morphisms $W(C^\flat)\to B$ which send $J$ to $N_m(B)$ then correspond to the morphisms $C^\flat \to B_{[m]}$ which factor through $C$. This shows that $\pi$ is an isomorphism.
\end{proof}



\begin{prop}\label{Perfectoid:p}
For a semiperfect ring $C$ such that $\Ker(C^\flat\to C)$ is finitely generated and $m\geq 0$, the pair $(\mathbb{B}_\mr{conv}^{(m)}(C),\mathbb{A}_\mr{conv}^{(m)}(C)^N)$ is preperfectoid and sheafy.
\end{prop}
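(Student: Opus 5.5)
Write $R=C^\flat$, a perfect ring, $J=\Ker(W(R)\to C)$, and let $f_1,\dots,f_r\in R$ generate $\Ker(R\to C)$. The first step is to find explicit generators of $J$ and of $\mathbb{A}_\mr{conv}^{(m)}(C)$. The ideal $J$ is generated by $p$ and the Teichmüller lifts $[f_1],\dots,[f_r]$, because $W(R)/(p,[f_i])=R/(f_i)=C$. Since $J^{(m)}$ is generated by $p$ and by $p^m$-th powers of elements of $J$, I will check that modulo $p$ it is generated by $p$ and the $[f_i]^{p^m}=[f_i^{p^m}]$. This uses $(x+y)^{p^m}\equiv x^{p^m}+y^{p^m} \bmod p$ and $(ax)^{p^m}=a^{p^m}x^{p^m}$. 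Consequently
$$\mathbb{A}_\mr{conv}^{(m)}(C)=W(R)\left\langle \tfrac{[f_1^{p^m}]}{p},\dots,\tfrac{[f_r^{p^m}]}{p}\right\rangle,$$
the $p$-adic completion of $W(R)[T_i]/(pT_i-[f_i^{p^m}])$ modulo $p$-torsion. In adic terms, $\Spa(\mathbb{B}_\mr{conv}^{(m)}(C),\mathbb{A}_\mr{conv}^{(m)}(C)^N)$ is the rational subset $\{|[f_i^{p^m}]|\le|p|\}$ of $\Spa(W(R)[\tfrac1p],W(R))$.

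To see that this is preperfectoid, I will base change to a perfectoid field. Put $K=\widehat{\Qp(p^{1/p^\infty})}$ and $\mathcal{O}_K$ its ring of integers. The ring $W(R)\cotimes_{\Zp}\mathcal{O}_K$ is the ring $W(R\otimes \F_p[t^{1/p^\infty}])/(\text{untilt of }t)$, i.e. the untilt of a perfectoid ring. Hence $(W(R)\cotimes\mathcal{O}_K)[\tfrac1p]$ is perfectoid, since its mod-$p^{1/p}$ reduction is a quotient of a perfect ring on which Frobenius is surjective. Next, the condition $|[f_i^{p^m}]|\le |p|$ is a rational localisation whose parameters $p^{-1}[f_i^{p^m}]$ can be rewritten as $[g_i]/[\varpi^\flat]$ up to a unit, where $\varpi^\flat=(p,p^{1/p},\dots)^\flat$ and $g_i=f_i^{p^m}$ lies in the perfect tilt. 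By Scholze's theorem that rational localisations of perfectoid affinoids are perfectoid (cf. \cite{KL15}), the completed base change $\mathbb{B}_\mr{conv}^{(m)}(C)\cotimes_{\Qp}K$ with its integral closure is a perfectoid Tate pair. This is exactly the definition of preperfectoid in \cite[Def.~3.7.1]{KL15}. One subtlety must be checked: the integral closure $\mathbb{A}^N$ after base change agrees, up to completion and almost mathematics, with the perfectoid ring of integral elements. This should follow because perfectoid rings of integral elements are the completed integral closures.

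For sheafiness, I will use that preperfectoid pairs are sheafy. The base change to $K$ is perfectoid, hence stably uniform and sheafy. Moreover $\Zp\to\mathcal{O}_K$ is topologically free, and in fact faithfully flat with a $\Zp$-linear splitting of Banach modules. So the Čech complex of any rational covering of $\Spa(\mathbb{B},\mathbb{A}^N)$ injects, via a continuous retraction, into the exact Čech complex after base change, and exactness descends. This is essentially \cite[Prop.~3.7.3]{KL15}, which I would invoke directly.

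I expect the main obstacle to be the middle step: verifying that the rational localisation, after base change to $K$, is genuinely perfectoid. The hypothesis that $\Ker(C^\flat\to C)$ is finitely generated enters precisely here, because it makes the localisation rational with finitely many parameters. The first thing I will try is to rewrite the parameters $[f_i^{p^m}]/p$ in Teichmüller form, so that Scholze's rational localisation theorem applies verbatim.
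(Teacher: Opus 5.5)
Your proposal is correct and follows the paper's proof: you use the same generators $p,[f_i]$ of $J$ and $p,[f_i]^{p^m}$ of $J^{(m)}$, the same identification of the tube with the rational subset $\{|[f_i]^{p^m}|\le|p|\}$ of $\Spa(W(C^\flat)[\tfrac1p],W(C^\flat))$, and the same base change to $\widehat{\Qp(p^{1/p^\infty})}$ to get perfectoidness. The only difference is packaging: the paper cites \cite[Lem.~2.4.13]{KL15} to identify $\mathbb{A}_\mr{conv}^{(m)}(C)^N$ as the ring of integral elements of the localisation (your flagged ``subtlety''), and \cite[Thm.~3.7.4]{KL15} for preperfectoidness and sheafiness of rational localisations, which is what your by-hand steps reprove (the Teichmüller rewriting of the parameters is not actually needed).
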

\begin{proof}
 If $a_1,\dots,a_r$ are generators of $\Ker(C^\flat\to C)$, the ideal $J\coloneqq\Ker(W(C^\flat)\to C)$ is generated by $p$ together with $[a_i]$. It follows that $J^{(m)}$ is generated by $p$ and the elements $[a_i]^{p^m}$, so that $\mathbb{A}_\mr{conv}^{(m)}(C)$ is the $p$-adic completion of
$$W(C^\flat)\left[\tfrac{[a_1]^{p^m}}{p},\dots,\tfrac{[a_r]^{p^m}}{p}\right].$$

\spa

The pair $(W(C^\flat)[\tfrac1p],W(C^\flat))$ is a preperfectoid adic Banach ring: writing $(K,K^+)$ for the perfectoid affinoid pair obtained as the $p$-adic completion of $(\Qp[p^{1/p^\infty}],\Zp[p^{1/p^\infty}])$, the base change $W(C^\flat)\cotimes_{\Zp}K^+$ is integral perfectoid, since it is $p$-complete and the Frobenius $$(W(C^\flat)\cotimes_{\Zp}K^+)/p^{1/p}\to (W(C^\flat)\cotimes_{\Zp}K^+)/p$$ is an isomorphism.

\spa
The tube $\Spa(\mathbb{B}_\mr{conv}^{(m)}(C),\mathbb{A}_\mr{conv}^{(m)}(C)^N)$ is the rational localisation  $$\Spa(W(C^\flat)[\tfrac1p],W(C^\flat))\left(\frac{[a_1]^{p^m},\dots,[a_r]^{p^m}}{p}\right).$$ By \cite[Lem.~2.4.13]{KL15}, its ring of integral elements is the $p$-adic completion of the integral closure of $W(C^\flat)[\tfrac{[a_i]^{p^m}}{p}]$, which is exactly $\mathbb{A}_\mr{conv}^{(m)}(C)^N$. By \cite[Thm.~3.7.4]{KL15}, a rational localisation of a preperfectoid adic Banach ring is again preperfectoid and sheafy. This yields the desired result.
\end{proof}

	\begin{theo}\label{ConvStackFunctor}
		The assignment $X \mapsto X_{\mr{conv}}^\mr{ad}$ defines a functor
		$$\{f\text{-}\mr{semiperfect}\ \mr{schemes}/\F_p\} \to \{\mr{Preperfectoid}\ \mr{spaces}/{\Qp}\}.$$
	\end{theo}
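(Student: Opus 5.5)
We reduce first to the affine case and then glue. Take an $f$-semiperfect affine $X=\Spec(C)$. Choose an isogeny $C\twoheadrightarrow D$ such that $\Ker(D^\flat\to D)$ is finitely generated; then $C^\flat=D^\flat$. The kernel of $C\to D$ is killed by $F^k$ for some $k$. Hence any map $D\to B_{[m+k]}$ composed with $C\to D$ and with the Frobenius-type identification $\varphi^k\colon B_{[m+k]}\hookrightarrow B_{[m]}$ gives back compatible data, and conversely a map $C\to B_{[m]}$ factors through $D$ after passing to level $m+k$. Indeed, for $x\in\Ker(C\to D)$ and a lift $\tilde x$ we have $\tilde x^{p^k}\mapsto 0$ in $B_{[m]}$, so $\tilde x$ maps to $0$ in $B_{[m+k]}$. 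So the two towers $(\Spec(C)/\Zp)^{(m)}_\conv$ and $(\Spec(D)/\Zp)^{(m)}_\conv$ are cofinal and interleaved, and $X_\conv^{\mr{ad}}=\Spec(D)_\conv^{\mr{ad}}$. Alternatively, Lemma \ref{FrobeniusIso:l} already implies that the colimit is insensitive to such isogenies.

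We may therefore assume that $\Ker(C^\flat\to C)$ is finitely generated, generated by $a_1,\dots,a_r$. By Proposition \ref{SemiPerfectRep:p}, the stack $(X/\Zp)^{(m)}_\conv$ is represented by $\Spf(\mathbb{A}^{(m)}_\conv(C))$. Restricted to $\mr{Alg}^{+,N}_{\Zp}$, a map $\mathbb{A}^{(m)}_\conv(C)\to R$ with $R$ integrally closed in $R[\tfrac1p]$ extends uniquely to $\mathbb{A}^{(m)}_\conv(C)^N$. Hence $X^{\mr{ad},(m)}_\conv$ is represented by $U_m\coloneqq\Spa(\mathbb{B}^{(m)}_\conv(C),\mathbb{A}^{(m)}_\conv(C)^N)$, which is preperfectoid and sheafy by Proposition \ref{Perfectoid:p}.

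By the proof of Proposition \ref{Perfectoid:p}, the $U_m$ are the rational subsets $\{|[a_i]^{p^m}|\le|p|\}$ of $\Spa(W(C^\flat)[\tfrac1p],W(C^\flat))$, so the transition maps $j$ are open immersions $U_m\subseteq U_{m+1}$. Therefore $X^{\mr{ad}}_\conv=\mr{colim}_m U_m$ is the increasing union of these rational opens, namely the open subset $\bigcup_m U_m$ of the generic fibre of $\Spa(W(C^\flat)[\tfrac1p],W(C^\flat))$. This is a preperfectoid adic space, being locally preperfectoid. One has to check that the stack colimit agrees with this union on points valued in $R\in\mr{Alg}^{+,N}_{\Zp}$. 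Because $\Spa(R[\tfrac1p],R)$ is quasi-compact, a map to $\bigcup U_m$ factors through some $U_m$; conversely, colimits of sheaves along monomorphisms are computed after sheafification, and the quasi-compactness of $p$-completely étale covers handles this.

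For general $X$, we use a Zariski cover by $f$-semiperfect affines. The key point is that for an open $V\subseteq X$, the map $V^{\mr{ad}}_\conv\to X^{\mr{ad}}_\conv$ is an open immersion. In the affine case with $V=D(f)$, it is the open locus where the Teichmüller-type lift of $f$ is a unit at some level. Gluing then follows from Proposition \ref{EtaleDescent:p}, since Zariski covers are étale covers. Functoriality in $X$ is clear because the construction is a restriction of a functor of stacks, and representing objects are determined by Yoneda.

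The step we expect to be the main obstacle is this gluing together with the colimit identification. Concretely, one must show that the stacky colimit and the Čech colimit are represented by the adic space obtained by gluing, and that intersections of the pieces are again preperfectoid, rather than merely sheaves on $\mr{Alg}^{+,N}_{\Zp}$. We would try to exhibit $D(f)_\conv^{\mr{ad},(m)}$ as a rational subset $\{|[\tilde f]|^{p^m}\ge|p|\}$-type union inside $U_{m'}$ for suitable $m'$.
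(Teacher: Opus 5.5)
Your proposal follows the paper's proof, and in fact gives more detail than the paper on the colimit over $m$ and on the gluing. The shared steps are: isogeny invariance of $(X/\Zp)_{\conv}$, where your interleaving $(\Spec D)^{(m)}_{\conv}\to(\Spec C)^{(m)}_{\conv}\to(\Spec D)^{(m+k)}_{\conv}$ is a hands-on substitute for the paper's appeal to Lemma~\ref{FrobeniusIso:l}; representability of each level by the preperfectoid affinoid of Propositions~\ref{SemiPerfectRep:p} and~\ref{Perfectoid:p}; and Zariski descent.

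One correction to your last paragraph: your earlier description of the locus of $D(f)$ as the place where the Teichmüller lift is a unit is the right one. Since $N_m(R)$ lies in the Jacobson radical of $R$, that locus is $\{|[f^\flat]|\geq 1\}$ for any lift $f^\flat\in C^\flat$ of $f$, a rational subset independent of $m$. A condition like $|[\tilde f]|^{p^m}\geq|p|$ cuts out a strictly larger set, which can be nonempty in $U_m$ even when $f$ is nilpotent and $D(f)=\emptyset$.
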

	
	\begin{proof}
		Let $X=\Spec(C)$ be affine with $C$ $f$-semiperfect, choose an isogeny $C\to D$ with $\Ker(D^\flat\to D)$ finitely generated, and write $Y\coloneqq \Spec(D)$. By Lemma \ref{FrobeniusIso:l} the absolute Frobenius of $(X/\Zp)_{\conv}$ and $(Y/\Zp)_{\conv}$ are isomorphisms. By functoriality, this shows that $(X/\Zp)_{\conv}$ and $(Y/\Zp)_{\conv}$ are isomorphic stacks. We are thus reduced to proving the result for $Y$, which follows from Proposition \ref{SemiPerfectRep:p} and Proposition \ref{Perfectoid:p}. The general result is obtained by Zariski descent.
	\end{proof}

\section{Computing convergent cohomology}
\label{sec:computing}
The goal of this section is to compute the cohomology of finitely generated
coefficients on the convergent stack and to identify it with the de Rham
cohomology of the tube. We first prove a convergent Poincaré lemma and a
vanishing of higher limits across levels, then construct the functor from
convergent isocrystals to modules with integrable connection. Assembling these
via a Bhatt--de Jong cosimplicial argument yields the affine comparison
(Theorem \ref{convDeRhamComparison1:t}), which we extend to smooth embeddings
(Theorem \ref{convDeRhamComparison2:t}) and globalise (Corollary
\ref{GlobalconvDeRhamComparison:c}), recovering and extending Ogus'
comparison \cite[Thm.~0.6.6]{OgusCT}.
\subsection{The convergent Poincaré lemma}
The heart of any de Rham comparison is a Poincaré lemma: the assertion that the
relevant differentials have no cohomology beyond the constants. In the
convergent world this acquires the following form.
\begin{lemm}\label{PoincareLemmaConvergent:l}
	For $A\in \mr{Alg}^{+}_{\Zp}$ and $P$ a finitely generated polynomial $A$-algebra with augmentation ideal $I$, the complex $$A[\tfrac{1}{p}]\to \Omega^0_{E_\infty/A}\to \Omega^1_{E_\infty/A}\to \dots$$ is homotopy equivalent to zero, where $E_\infty\coloneqq E_{P,\infty}(I).$
\end{lemm}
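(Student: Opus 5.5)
We write $P=A[x_1,\dots,x_n]$ with $I=(x_1,\dots,x_n)$. Since $E^+_{P,m}(I)$ is the $p$-adic completion of $P[\tfrac{(I,p)^{(m)}}{p}]$ and $(I,p)^{(m)}$ is generated by $p$ and $p^m$-th powers of elements of $I$, we first identify $E^+_m$ up to bounded torsion. The plan is to show that, up to isogeny, $E^+_m$ is the $p$-adic completion of $A[x_1,\dots,x_n][y_1,\dots,y_n]/(x_i^{p^m}-py_i)$, i.e. a ring of functions on the closed polydisc of radius $|p|^{1/p^m}$. Indeed, for $f=\sum a_\alpha x^\alpha\in I$, the power $f^{p^m}$ lies in the ideal generated by $p$ and the monomials $x^{\beta}$ with $|\beta|\ge p^m$. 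Conversely, such monomials become divisible by $p$ after at most a bounded loss once we invert $p$. Hence
\[
E_m = A\langle p^{-1/p^m}x_1,\dots,p^{-1/p^m}x_n\rangle\otimes\Qp,
\]
interpreted as power series $\sum a_\alpha x^\alpha$ with $a_\alpha\in A[\tfrac1p]$ and $|a_\alpha| p^{-|\alpha|/p^m}\to0$ (suitably bounded). The transition maps $E_{m+1}\to E_m$ are restrictions to smaller polydiscs, and $E_\infty=\lim_m E_m$ is the ring of power series converging on the open unit polydisc. The same description holds for $\Omega^\bullet_{E_\infty/A}$ as the limit of the completed de Rham complexes. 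Using the Künneth decomposition, $\Omega^\bullet_{E_\infty/A}$ is the completed tensor product of $n$ copies of the one-variable complex. We therefore reduce to $n=1$, or we argue directly with the multivariable Koszul-type homotopy.

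The core step is to write an explicit contracting homotopy. On forms $\omega=\sum_{\alpha,J} a_{\alpha,J}x^\alpha dx_J$ we define the standard radial homotopy
\[
h(x^\alpha dx_{j_1}\wedge\cdots\wedge dx_{j_k})=\frac{1}{|\alpha|+k}\sum_{s}(-1)^{s-1}x_{j_s}x^\alpha dx_{j_1}\wedge\cdots\widehat{dx_{j_s}}\cdots\wedge dx_{j_k},
\]
which satisfies $dh+hd=\mathrm{id}-\epsilon$, where $\epsilon$ is evaluation at $0$ in degree $0$ (the augmentation $E_\infty\to A[\tfrac1p]$) and zero in positive degree. Together with the inclusion $A[\tfrac1p]\to E_\infty$, this gives the homotopy equivalence of the augmented complex to zero, provided $h$ is well defined on $E_\infty$.

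The main obstacle is the convergence of $h$, because of the denominators $1/(|\alpha|+k)$, which are not $p$-adically bounded. On a fixed closed disc $E_m$ the map $h$ does not preserve $E_m$. However, $|1/N|_p\le N$, which grows only polynomially. Therefore, if $|a_\alpha|r^{|\alpha|}\to0$ for some radius $r$, then $|a_\alpha/(|\alpha|+k)|\,r'^{|\alpha|}\to0$ for every $r'<r$. Thus $h$ maps $\Omega^\bullet_{E_m}$ into $\Omega^\bullet_{E_{m-1}}$, strictly shrinking the radius from $p^{-1/p^m}$ to $p^{-1/p^{m-1}}$, and these are compatible with the transition maps. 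Passing to the limit, $h$ is a well-defined continuous endomorphism of $\Omega^\bullet_{E_\infty/A}$ satisfying the homotopy identity. This is precisely why the statement is made for $E_\infty$ rather than for a single $E_m$. A secondary technical point to check carefully is the identification of $E^+_m$ with the polydisc algebra up to bounded $p$-torsion and bounded denominators. Since we invert $p$ at the end, it suffices to exhibit maps in both directions between $E^+_m$ and the polydisc algebras at neighbouring levels, which is again harmless after taking $\lim_m$.
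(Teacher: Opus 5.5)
Your argument is correct and is essentially the paper's: the paper also integrates formally (in one variable, $\sum a_i t^i\,dt\mapsto\sum \frac{a_i}{i+1}t^{i+1}$), notes that the denominators grow only polynomially so the integral of a form over $E_{m+1}$ lands in $E_m$, and uses $E_\infty=\lim_m E_m$ to absorb this loss of one level. The only difference is packaging: the paper reduces to one variable and proves surjectivity of $d\colon\Omega^0\to\Omega^1$, leaving the completed K\"unneth step and the splittings implicit, whereas your weight-graded homotopy $h=\frac{1}{|\alpha|+k}\,\iota_{\mathcal{E}}$ (contraction with the Euler field), together with the augmentation $\epsilon$, writes down the contracting homotopy for all $n$ variables at once.
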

\begin{proof}
	The result follows from the case of a polynomial algebra in one variable. In this case, it is enough to prove that $$\Omega^0_{E_\infty/A}\to \Omega^1_{E_\infty/A}$$ is surjective. Let $\sum_{i=0}^\infty a_i t^i dt$ be an element of $\Omega^1_{E_{m+1}/A}$. By construction, the coefficients $a_i$ satisfy the condition  $$\lim_{i\to \infty}v_p(a_i)+ip^{-m-1}=\infty.$$ The formal integral of $\sum_{i=0}^\infty a_i t^i dt$ is equal to $f\coloneqq \sum_{i=0}^\infty \frac{a_i}{i+1}t^{i+1}$. We want to prove that $f$ is an element of $E_m$, which means that $$\lim_{i\to \infty}v_p(a_i)-v_p(i+1)+(i+1)p^{-m}=\infty.$$ This follows from the observation that $$ip^{-m-1}-(i+1)p^{-m}+v_p(i+1)=(p^{-m-1}-p^{-m})i-p^{-m}+v_p(i+1)$$ is bounded from above.  
\end{proof}

\begin{nota}\label{Nota}
From now on and until the end of \S \ref{TheAffineDeRham:s} we use the following conventions. We fix a Noetherian ring $A\in \mr{Alg}_{\Zp}^{+}$, a topologically finitely generated $A$-algebra $B\in \mr{Alg}_A^{+,\mr{fg}}$, and an ideal $I\subseteq B$ such that $p\in I$. We write $C\coloneqq B/I$ and $X\coloneqq \Spec(C)$. We also write $E_m^+\coloneqq E_{B,m}^+(I),$ $E_m\coloneqq E_{B,m}(I)$, and $E_\infty\coloneqq E_{B,\infty}(I)$  as in Construction \ref{Tubes:c}.
\spa

We denote by $\mr{MIC}^\mr{fg}(B[\tfrac{1}{p}],I)$ the category of finitely generated $E_\infty$-modules $M$ endowed with a flat connection $$\nabla\colon M\to M\otimes_{E_\infty} \Omega^1_{E_\infty/A}.$$ For $(M,\nabla)\in \mr{MIC}^\mr{fg}(B[\tfrac{1}{p}],I)$, we write $M\otimes_{E_\infty}\Omega^\bullet_{E_\infty/A}$ for the associated de Rham complex.
\end{nota}
\begin{lemm}[cf. {\cite[Prop. 0.3.8]{OgusCT}}]\label{ConsequencePoincare:l}In the situation of Notation \ref{Nota}, let $P$ be a finitely generated polynomial $B$-algebra with augmentation ideal $J$, and let $K$ be the ideal $IP+J$ and   $E'_\infty\coloneqq E_{P,\infty}(K)$. For every $M\in \mr{MIC}^\mr{fg}(B[\tfrac{1}{p}],I)$, the natural morphism of complexes $$\iota\colon M\otimes_{E_\infty} \Omega^\bullet_{E_\infty/A}\to M\otimes_{E_\infty}\Omega^\bullet_{E'_\infty/A} $$ is a quasi-isomorphism.
\end{lemm}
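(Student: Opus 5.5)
We reduce to the case where $P=B[t_1,\dots,t_n]$ with $J=(t_1,\dots,t_n)$, and then to a single variable. We first identify $E'_\infty$ explicitly. At level $m$, the ring $E^+_{P,m}(K)$ is the $p$-adic completion of $P[(IP+J,p)^{(m)}/p]$. Since $J^{(m)}$ is generated modulo $p$ by the $t_i^{p^m}$, this ring is (up to the bounded-torsion and completion issues discussed below) the completed tensor product $E^+_{B,m}(I)\cotimes_{A} E^+_{A[t],m}((t,p))^{\otimes n}$. After inverting $p$ and passing to the limit over $m$, we get
\[
E'_\infty=\lim_m \Big(E_m\cotimes_{\Zp}\mathbb{Z}_p\langle \tfrac{t_1^{p^m}}{p},\dots,\tfrac{t_n^{p^m}}{p}\rangle\big[\tfrac1p\big]\Big),
\]
that is, convergent power series in the $t_i$ on the open unit polydisc over the tube. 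Correspondingly,
\[
\Omega^\bullet_{E'_\infty/A}=\lim_m\big(\Omega^\bullet_{E_m/A}\cotimes\Omega^\bullet_{D_m}\big),
\]
where $D_m$ is the one-variable disc algebra of radius $p^{-1/p^m}$. I would prove this identification by writing the $B$-points of both sides via Lemma \ref{Covering:l}.

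By induction on $n$, it suffices to treat $n=1$. There, the complex $M\otimes_{E_\infty}\Omega^\bullet_{E'_\infty/A}$ is the total complex of a double complex. One direction is the de Rham complex of $M$ over the tube, with coefficients in convergent power series in $t$. The other direction is the relative complex $d/dt\colon \Omega^0\to\Omega^0\,dt$. The map $\iota$ includes the $t$-constant terms. It is therefore enough to show that the relative $t$-de Rham complex
\[
M\otimes_{E_\infty}E'_\infty\xrightarrow{\ \partial_t\ } M\otimes_{E_\infty}E'_\infty\,dt
\]
is a resolution of $M$ (kernel $M$, surjective), and more strongly that it is homotopy equivalent to $M$ via the maps $f\mapsto f|_{t=0}$ and termwise integration. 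This is exactly the one-variable argument of Lemma \ref{PoincareLemmaConvergent:l}. A series $\sum a_i t^i\,dt$ with coefficients in $M\otimes E_{m+1}$ converging on the radius $p^{-1/p^{m+1}}$ disc has an integral converging on the radius $p^{-1/p^m}$ disc, because $v_p(i+1)$ grows logarithmically. The homotopy is $E_\infty$-linear, since integration in $t$ commutes with the $E_\infty$-coefficients. Tensoring with the finitely generated module $M$ and with $\Omega^j_{E_\infty/A}$ therefore preserves it, and a standard double-complex (filtration by $E_\infty$-degree) argument gives that $\iota$ is a quasi-isomorphism. The same homotopy equivalence yields the result for $M$ a finitely generated module with connection directly, because the connection on $M\otimes E'_\infty$ is $\nabla\otimes 1+1\otimes d$ and the $t$-part only involves $d_t$.

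The main obstacle is the precise identification of $E'_\infty$ and its differentials as a limit of completed tensor products, and the verification that $M\otimes_{E_\infty}(-)$ commutes with these limits and completions. For the tensor product, I would use Noetherianity of $A$ and $B$, so that each $E_m$ is Noetherian Banach (Tate) and $M$ is finitely presented over it, making $M\otimes_{E_\infty}-$ agree with completed tensor product level by level. The $p$-torsion killed in $E^+$ disappears after inverting $p$. I would handle the limits by working at finite levels with the shift $m+1\to m$ coming from the integration estimate and then passing to $\lim_m$; the homotopy is compatible with the transition maps, so no $R^1\lim$ issue arises.
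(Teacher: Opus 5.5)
Your proposal is correct and is essentially the paper's argument: the paper also filters $M\otimes_{E_\infty}\Omega^\bullet_{E'_\infty/A}$ by $E_\infty$-degree (following Stacks Project Tag 07LD), identifies the graded pieces with $M\otimes_{E_\infty}\Omega^i_{E_\infty/A}\otimes_{E_\infty}\Omega^\bullet_{E'_\infty/B}[-i]$ via the split sequence of differentials, and kills them with the integration homotopy of the convergent Poincar\'e lemma. Your one-variable induction, your explicit identification of $E'_\infty$, and your remark that the homotopy is $E_\infty$-linear (so it survives tensoring with $M\otimes_{E_\infty}\Omega^i_{E_\infty/A}$) make explicit what the paper uses tacitly when it applies that lemma over $E_\infty$ rather than over $A$.
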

\begin{proof}
We follow the filtration argument of \stack{07LD}, replacing the divided power Poincaré lemma with Lemma \ref{PoincareLemmaConvergent:l}. We have a split exact sequence 
\begin{equation}\label{ConvHodgeSES:eq}
0\longrightarrow \Omega^1_{E_\infty/A}\otimes_{E_\infty}E'_\infty\longrightarrow \Omega^1_{E'_\infty/A}\longrightarrow \Omega^1_{E'_\infty/B}\longrightarrow 0.
\end{equation} 

Let $F^*$ be the filtration on $\Omega^\bullet_{E'_\infty/A}$ defined by

$$F^i\left(\Omega^\bullet_{E'_\infty/A}\right)\coloneqq \Omega^{\geq i}_{E_\infty/A}\wedge\Omega^\bullet_{E'_\infty/A}.$$

The associated graded pieces are given by
$$\mr{gr}^i_F\left(\Omega^\bullet_{E'_\infty/A}\right)=\Omega^i_{E_\infty/A}\otimes_{E_\infty}\Omega^{\bullet}_{E'_\infty/B}[-i].$$
Tensoring with $M$ over $E_\infty$, we obtain a  decreasing filtration on $M\otimes_{E_\infty}\Omega^\bullet_{E'_\infty/A}$ with
$$\mr{gr}^i_F\bigl(M\otimes_{E_\infty}\Omega^\bullet_{E'_\infty/A}\bigr)\;=\;M\otimes_{E_\infty}\Omega^i_{E_\infty/A}\otimes_{E_\infty}\Omega^{\bullet}_{E'_\infty/B}[-i].$$

If we write $N_{i}\coloneqq M\otimes_{E_\infty}\Omega^i_{E_\infty/A}$ for $i\geq 0$, thanks to Lemma \ref{PoincareLemmaConvergent:l}, the morphism 
$$\mr{gr}^i_F(\iota)[i]\colon N_i\xrightarrow{\sim}N_i\otimes_{E_\infty}\Omega^\bullet_{E'_\infty/B}, $$
is a quasi-isomorphism for every $i\geq 0$. 
We deduce that $\iota$ is a quasi-isomorphism.
\end{proof}

\subsection{A vanishing of higher limits}

This section is an interlude on the vanishing of higher derived limits when $m$ varies. The proof is just an adaptation of \cite[Prop. 0.3.8]{OgusCT} to our setting.
\begin{lemm}[cf. {\cite[Lem. 0.3.4]{OgusCT}}]\label{ImageContainedVar:l}
In the situation of Notation \ref{Nota}, the image of $E^+_{m+k}\to E^+_m$ is contained in 
$B+p^k E^+_m$ for $k\geq 0$.
\end{lemm}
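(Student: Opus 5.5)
The plan is to reduce to generators and then use a single divisibility computation with binomial coefficients. Recall that $E^+_{m+k}$ is the $p$-adic completion of $B[\tfrac{(I,p)^{(m+k)}}{p}]$. Since $p\in I$ and $B$ is Noetherian, we may choose generators $f_1,\dots,f_r$ of $I$, and we may include $p$ among them. Then $I^{(m+k)}$ is generated by $p$ together with the elements $f_i^{p^{m+k}}$; indeed, for $x=\sum b_if_i$ we have $x^{p^{n}}\equiv \sum b_i^{p^n}f_i^{p^n}$ modulo $p$. It follows that $E^+_{m+k}$ is the $p$-adic completion of the $B$-subalgebra of $B[\tfrac1p]$ generated by the elements $u_i\coloneqq f_i^{p^{m+k}}/p$. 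The target $B+p^kE^+_m$ is a $B$-submodule, and it is closed under multiplication because $p^kE^+_m$ is an ideal of $E^+_m$ and $B\subseteq E^+_m$. It is also $p$-adically closed in $E^+_m$, since it contains $p^kE^+_m$. So it suffices to show that every monomial $\prod_i u_i^{n_i}$ lies in $B+p^kE^+_m$, and since this set is a subring, it is enough to treat the powers $u_i^n$ of a single generator. Continuity and completion then extend the statement from the dense subring to all of $E^+_{m+k}$.

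The key step is a computation for a single $f=f_i$. Put $v\coloneqq f^{p^m}/p\in E^+_m$. Then
$$u^n=\frac{f^{np^{m+k}}}{p^n}=p^{np^k-n}\,v^{np^k}.$$
Whenever $np^k-n\geq k$, this lies in $p^kE^+_m$. This holds for all $n\geq 1$ as soon as $p^k-1\geq k$, which is true for every $k\geq 0$ and every prime $p$ (for $p=2$ we have $2^k-1\geq k$, with equality only when $k\leq 1$). Hence every positive power of every generator lies in $p^kE^+_m$, and together with $B$ this gives the containment. In fact $u^n$ lands in $p^{n(p^k-1)}E^+_m$, so it tends to $0$ there, which makes the passage to the $p$-adic completion harmless.

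I expect the only delicate points to be bookkeeping rather than real difficulties. First, one must check that $I^{(m+k)}$ really is generated by the $f_i^{p^{m+k}}$ and $p$; this follows from the Frobenius congruence above and uses Noetherianity only to ensure finitely many generators, though even infinitely many would be fine. Second, one must confirm that the map $E^+_{m+k}\to E^+_m$ is the one induced inside $B[\tfrac1p]$, so that it sends $u$ to $p^{p^k-1}v^{p^k}$. There is also the question of whether $E^+_m$ should be read before or after quotienting by torsion; since $B$ is $p$-torsion free, the algebras $B[\ldots/p]\subseteq B[\tfrac1p]$ are torsion free, so no issue arises. The main thing to double-check is the inequality $n(p^k-1)\geq k$ for $p=2$ and small $n$ and $k$, which holds as verified above.
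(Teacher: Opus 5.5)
Your proposal is correct and follows the paper's proof. You reduce to the topological generators $x^{p^{m+k}}/p$ of $E^+_{m+k}$ over $B$ and apply the identity $x^{p^{m+k}}/p=p^{p^k-1}\bigl(x^{p^m}/p\bigr)^{p^k}$ together with $p^k-1\geq k$. Your extra bookkeeping (finitely many generators of $I^{(m+k)}$, and $B+p^kE^+_m$ being a closed subring because it contains the open ideal $p^kE^+_m$) makes explicit what the paper leaves implicit.
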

\begin{proof}
The ring $E^+_{m+k}$ is topologically generated over $B$ by the elements 
$x^{p^{m+k}}/p$ with $x\in I$. The identity
$$\frac{x^{p^{m+k}}}{p}=p^{\,p^k-1}\left(\frac{x^{p^m}}{p}\right)^{p^k}$$ shows that each such element lies in $p^k E_m^+$. 
\end{proof}

\begin{prop}[cf. {\cite[Prop. 0.3.8]{OgusCT}}]\label{MLZero:p}
Let $(M_m)_{m\geq 0}$ be a projective system of finitely 
generated $E_m^+$-modules (resp. $E_m$-modules) with 
transition maps $\psi_m$ inducing surjections
$$\tilde{\psi}_m\colon M_{m+1}\otimes_{E_{m+1}^+}E_m^+\twoheadrightarrow M_m.$$
Then, the higher derived limit $R^i\lim_m M_m$ vanishes for $i\geq 1$.
\end{prop}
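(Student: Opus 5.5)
Since the index set is $\Z_{\geq 0}$, the groups $R^i\lim_m M_m$ vanish automatically for $i\geq 2$. The whole point is therefore $R^1\lim_m M_m=0$, which I would establish through a topological Mittag-Leffler argument in the style of \cite[Prop.~0.3.8]{OgusCT} and EGA $0_{\mathrm{III}}$ 13.2.4. I would treat the integral case first and deduce the rational case afterwards.

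\emph{Integral case.} Since $A$ is Noetherian and $B$ is topologically finitely generated, each $E_m^+$ is a Noetherian $p$-adically complete ring. Hence each finitely generated $M_m$ is $p$-adically complete and separated, and every submodule of $M_m$ is closed. By the Mittag-Leffler criterion for complete metrizable groups, it suffices to show the following: for every $m$ and every $n\geq 0$ there is $k$ such that
$$\psi(M_{m+k'})+p^nM_m\supseteq \psi(M_{m+k''})\quad\text{for all } k',k''\geq k.$$
In other words, the images stabilise modulo $p^n$, which makes $\varprojlim_k \psi(M_{m+k})$ dense in each stage.

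To get this, I would fix generators $g_1,\dots,g_r$ of $M_{m+1}$. Surjectivity of the maps $\tilde\psi$ gives $M_{m+1}=\sum_j E_{m+1}^+ g_j$, and for $k\geq 1$ each $g_j$ lies in the $E^+_{m+1}$-span of $\psi(M_{m+k})$. By Lemma \ref{ImageContainedVar:l}, every element of $\psi(M_{m+k})\subseteq M_{m+1}$ then lies in
$$\sum_j B\,\psi(g_j)+p^{k-1}M_{m+1}.$$
Pushing down to $M_m$, I expect to show
$$\psi(M_{m+k})\subseteq \sum_j B\,\psi(g_j)+p^{k-1}M_m \quad\text{and}\quad \sum_j B\,\psi(g_j)\subseteq \psi(M_{m+k})+p^{k-1}M_m.$$
The second inclusion again uses that each $g_j$ is an $E^+_{m+1}$-combination of elements of $\psi(M_{m+k})$, together with Lemma \ref{ImageContainedVar:l} applied to the coefficients. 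Together the two inclusions give stabilisation of the images modulo $p^{k-1}$.

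\emph{Vanishing of $R^1\lim$.} I would then conclude by successive approximation. Given $(y_m)\in\prod_m M_m$, I construct $(x_m)$ with $x_m-\psi(x_{m+1})=y_m$ as a $p$-adically convergent sum, using completeness of the $M_m$. This is the standard proof that $R^1\lim$ vanishes for a system of complete groups in which the images are eventually dense.

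\emph{Rational case.} For the $E_m$-modules, I would choose finitely generated $E_m^+$-lattices $M_m^+\subseteq M_m$ compatible with the $\psi_m$, taking $M_m^+$ to be the image of $M_{m+1}^+\otimes E_m^+$ and starting from lattices generated by generators. The lattices inherit the surjectivity hypothesis up to replacing them by their images. Then $R^1\lim M_m^+=0$ by the integral case, and inverting $p$ requires some care because $\lim$ does not commute with $[\tfrac1p]$. I would handle this by showing that the bounded-torsion defect is uniformly controlled: the kernels of $M_m^+\to M_m$ are killed by a bounded power of $p$ after shrinking.

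\emph{Main obstacle.} I expect the main obstacle to be the stabilisation inclusion in the integral case. One has to convert the ring-level statement of Lemma \ref{ImageContainedVar:l} into a module-level statement using only the surjectivity of $\tilde\psi_m$, keeping track of how the coefficients from $E^+_{m+1}$ degrade by a factor $p$ at each step. If the direct estimate fails, I would first try replacing the index shift $k$ by $2k$, so that one application of the lemma absorbs the coefficients and a second one produces the factor $p^k$.
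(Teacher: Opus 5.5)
Your overall framework is sound. Each $M_m$ is finitely generated over the Noetherian $p$-adically complete ring $E_m^+$. Stabilisation of the images modulo $p^n$, with the shift $k$ allowed to depend on $n$, says exactly that every system $(M_m/p^nM_m)_m$ is Mittag-Leffler, and together with completeness this does give $R^1\lim_m M_m=0$.

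\textbf{The gap is in your proof of the stabilisation.} You fix generators $g_1,\dots,g_r$ of $M_{m+1}$ and assert $\psi(M_{m+k})\subseteq\sum_j B\,\psi(g_j)+p^{k-1}M_m$. Lemma \ref{ImageContainedVar:l} only controls coefficients coming from $E^+_{m+k}$, and nothing ties fixed generators of level $m+1$ to elements of $M_{m+k}$ through such coefficients. The inclusion is in fact false. Take $A=\Zp$, $B=\Zp\langle x\rangle$, $I=(p,x)$, $m\geq 1$, and $M_m=E_m^+$ with the natural maps. Take the generator $g_1=u=1+x\cdot x^{p^{m+1}}/p$ of $M_{m+1}$; it is a unit, since $x$ is nilpotent in $E^+_{m+1}/p$. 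The ring $E_m^+$ consists of the series $\sum_i a_ix^i$ with $p^{\lfloor i/p^m\rfloor}a_i\in\Zp$ tending to $0$. Comparing the coefficients of $x^0$ and $x^{1+p^{m+1}}$ shows $1\notin Bu+p^{k-1}E_m^+$ as soon as $k\geq p+1$. The same computation shows $u\notin\psi(M_{m+k})+p^{k-1}E_m^+$, so your reverse inclusion fails too. In general that reverse inclusion only holds modulo $p$: the coefficients expressing $g_j$ through $\psi(M_{m+k})$ lie in $E^+_{m+1}$, whose image lies only in $B+pE^+_m$. Doubling the shift does not help, because the defect is anchoring the comparison at fixed generators of level $m+1$, not the count of powers of $p$.

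\textbf{The missing idea is to take the generators from further up the tower.} Write $F^kM_m$ for the image of $M_{m+k}$ in $M_m$.
\begin{itemize}
\item Iterating the surjectivity of the $\tilde\psi$ shows that $M_{m+k}$ is generated over $E^+_{m+k}$ by the image of $M_{m+k'}$ for every $k'\geq k$.
\item So each $x\in F^kM_m$ can be written as $\sum_i\bar a_i\,\psi(w_i)$ with $w_i\in M_{m+k'}$ and $\bar a_i$ in the image of $E^+_{m+k}$. By Lemma \ref{ImageContainedVar:l}, $\bar a_i=b_i+p^ke_i$ with $b_i\in B$ and $e_i\in E_m^+$.
\item The transition maps are $B$-linear, so $\sum_ib_i\psi(w_i)=\psi(\sum_ib_iw_i)\in F^{k'}M_m$.
\end{itemize}
Hence $F^kM_m\subseteq F^{k'}M_m+p^kM_m$ for $k'\geq k$. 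This is the paper's inclusion \eqref{StabilisationVar:eq}, and it is exactly the stabilisation you need (take $k\geq n$). The paper then concludes with an explicit telescoping construction of the preimage, not an abstract completeness argument.

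\textbf{Your treatment of the rational case is also not yet a proof.} Defining $M_m^+$ as the image of $M_{m+1}^+\otimes E_m^+$ is a downward recursion with no starting point, and the bounded-torsion claim is unsupported. The ingredient the paper uses is density. The image of $E_{m+1}\to E_m$ contains $B[\tfrac1p]$, which is dense in $E_m$. So $\psi_m(M_{m+1})$ is dense in the finitely generated Banach $E_m$-module $M_m$, and the topological Mittag-Leffler criterion applies directly.
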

\begin{proof}
We follow \cite[Prop. 0.3.8]{OgusCT} mutatis mutandis, with 
$I^{(m)}$ and $E_m^+$ playing the role of 
$J_n^k=I^k+pB$ and $B_n$ in \emph{loc. cit.}, and Lemma
\ref{ImageContainedVar:l} replacing Ogus' 
\cite[Lem. 0.3.4]{OgusCT}. The case $i\geq 2$ is trivial; for 
$i=1$ we show that every $x\in\prod_m M_m$ is a coboundary.

\spa

Set $F^k (M_m)\coloneqq \mr{Im}(M_{m+k}\to M_m)$ for $k,m\geq 0$. Arguing as 
in \emph{loc. cit.}, the surjectivity of $\tilde{\psi}_m$ together 
with Lemma \ref{ImageContainedVar:l} yields, for $k'\geq k$ and 
$m'\geq m$,
\begin{equation}\label{StabilisationVar:eq}
F^k M_m\subseteq \mr{Im}(F^{k'}M_{m'}\to M_m)+p^k M_m.
\end{equation}

Setting 
$$y_m\coloneqq x_m+x_{m+1}+\cdots+x_{2m-1}, \qquad 
x'_m\coloneqq x_{2m}+x_{2m+1},$$
where each $x_\ell$ is mapped to $M_m$ via the appropriate composition of 
$\psi$'s, one checks $\psi_m(y_{m+1})-y_m=x'_m-x_m$, so it suffices to 
show that $x'$ is a coboundary. Since $x'_m\in F^m M_m$, the containment
\eqref{StabilisationVar:eq} allows us to choose inductively 
$z_m\in F^m (M_m)$ and $x''_m\in p^m M_m$ such that
$$x'_m+z_m=\psi_m(z_{m+1})+x''_m,\qquad z_0=0.$$
The sum $s_m\coloneqq \sum_{\ell\geq 0}x''_{m+\ell}$ converges in $M_m$ 
because the image of $x''_{m+\ell}$ in $M_m$ lies in $p^{m+\ell}M_m$, and it 
satisfies $s_m-\psi_m(s_{m+1})=x''_m$. Hence $x''$, and consequently 
$x'$ and $x$, are coboundaries.

\spa

The case of $E_m$-modules is treated as in the last 
paragraph of \cite[Prop. 0.3.8]{OgusCT}, thanks to the density of $\psi_m$.
\end{proof}
\subsection{Isocrystals and connections}

In this section we construct a functor from the category of finitely generated convergent isocrystals to the category of modules with flat connection over the tube of $X$ in the affine setting. Recall that Notation \ref{Nota} is in force.

\begin{cons}\label{IsocrystalConnection:c}For $m\geq 1$, let $F_m^+$ be the square-zero split extension of $E_m^+$ by $(\Omega^1_{E_m^+/A})/p\textrm{-}\mr{tors}$ and let $F_m$ be $F_m^+[\tfrac{1}{p}]$. By construction, $(F_m^+)_{[m]}=(E_m^+)_{[m]}=C$, thus it admits a tautological enlargement structure of $X$ of level $m$. It is endowed with two morphisms $p_0,p_1\colon E_m^+\to F_m^+$ of rings, the first one being defined by $p_0(f)=(f,0)$ and the second one by $p_1(f)=(f,df)$. The morphism $s\colon F_m^+\to E_m^+$ sending $(f,\omega)$ to $f$ is a section of both $p_0$ and $p_1$. 

\spa
	
For $\calM\in\mr{Isoc}^\mr{fg}_\mr{conv}(X/A)$, if we write $M_m$ for $\calM(E_m^+)$, the isocrystal property supplies $F_m$-linear isomorphisms
$$ M_m\otimes_{E_m,p_0}F_m\underset{\sim}{\xrightarrow{\ c_0\ }} \calM(F_m)\underset{\sim}{\xleftarrow{\ c_1\ }}
M_m\otimes_{E_m,p_1}F_m,$$
and we set $c\coloneqq c_1^{-1}\circ c_0\colon p_0^*M_m\iso p_1^*M_m$. As
$s\circ p_0=s\circ p_1=\mr{id}_{E_m^+}$ we have $s^*c=\mr{id}_{M_m}$.
For $v\in M_m$, the element $\nabla(v)\coloneqq p_1^*v-c(p_0^*v)$ lies in
$M_m\otimes_{E_m,p_1}\Omega^1_{E_m/A}$, thus it defines an $A$-linear homomorphism
$$\nabla\colon M_m\to M_m\otimes_{E_m}\Omega^1_{E_m/A}.$$ Passing to the limit with respect to $m$ we get a homomorphism
$$\nabla\colon M\to M\otimes_{E_\infty}\Omega^1_{E_\infty/A}.$$
\end{cons}
\begin{lemm}\label{IsocrystalsToMIC:l} The assignment $\calM\mapsto(M,\nabla)$ of Construction~\ref{IsocrystalConnection:c}  defines a functor
$$\mr{Isoc}^\mr{fg}_\mr{conv}(X/A)\longrightarrow\mr{MIC}^\mr{fg}(B[\tfrac1p],I).$$
\end{lemm}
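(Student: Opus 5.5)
To prove Lemma \ref{IsocrystalsToMIC:l}, I check four things in turn: that $\nabla$ is a connection at each finite level $m$, that these connections are compatible as $m$ varies, that the limit module $M$ is a finitely generated $E_\infty$-module, that the resulting connection is flat, and finally that the assignment is functorial.

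\textbf{The Leibniz rule at level $m$.} First, $\nabla$ indeed lands in $M_m\otimes_{E_m}\Omega^1_{E_m/A}$. The element $p_1^*v-c(p_0^*v)$ lies in the kernel of $s^*$, since $s^*c=\mr{id}$. That kernel is $M_m\otimes_{E_m,p_1}\ker(s)$, and $\ker(s)[\tfrac1p]=\Omega^1_{E_m/A}[\tfrac1p]$ is square-zero, so it is the module $M_m\otimes_{E_m}\Omega^1_{E_m/A}$. For $f\in E_m$ we have
$$p_1^*(fv)=p_1(f)p_1^*v=(f+df)\,p_1^*v,$$
while $c(p_0^*(fv))=f\,c(p_0^*v)$ because $c$ is $F_m$-linear and $p_0(f)=f$. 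Subtracting, and using $df\cdot\ker(s)=0$, gives $\nabla(fv)=f\nabla(v)+v\otimes df$.

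\textbf{Compatibility in $m$ and finite generation.} The maps $E_{m+1}^+\to E_m^+$ induce maps $F^+_{m+1}\to F^+_m$ that commute with $p_0$, $p_1$ and $s$. By the isocrystal property, $M_m=M_{m+1}\otimes_{E_{m+1}}E_m$, and the isomorphisms $c$ are compatible with these base changes. Hence the connections $\nabla_m$ form a projective system, and their limit defines $\nabla$ on $M=\lim_m M_m$. To see that $M$ is a finitely generated $E_\infty$-module, I would combine Proposition \ref{MLZero:p}, which gives vanishing of $R^1\lim$ for the surjective system, with the finite generation of $M_0$ over the Noetherian ring $E_0$. The goal is to show that $M\otimes_{E_\infty}E_m\to M_m$ is an isomorphism; lifting finitely many generators of $M_0$ then yields generators of $M$. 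I will take this as available, being the standard Ogus-style argument.

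\textbf{Flatness (main obstacle).} I expect flatness to be the hardest step. I would use the first-order neighbourhood of the diagonal with two infinitesimal variables: the ring
$$G_m^+=E_m^+\oplus\Omega^1\oplus\Omega^1\oplus\Omega^2\text{-terms},$$
the ring of functions on the triple diagonal modulo the square of its ideal. It carries three maps $q_0,q_1,q_2$ from $E_m^+$, and the isocrystal property gives the cocycle condition $q_{02}^*c=q_{12}^*c\circ q_{01}^*c$. Expanding this condition shows that the curvature $\nabla\circ\nabla$ vanishes, as in the crystalline case \stack{07J6}. An alternative route is to argue that $\nabla$ comes from a stratification via the groupoid of Theorem \ref{Quotient:t}, using the square-zero extensions inside $\mathfrak{X}_1^{(m)}$ and $\mathfrak{X}_2^{(m)}$; flatness then follows from the standard Katz--Oda computation. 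This step needs care because in $F_m^+$ one first kills the $p$-torsion and only afterwards inverts $p$.

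\textbf{Functoriality.} A morphism of isocrystals $\calM\to\calM'$ is compatible with the isomorphisms $c_0$ and $c_1$. It therefore commutes with $c$, hence with $\nabla$, and it is also compatible with passing to the limit in $m$. This gives the functor of the statement.
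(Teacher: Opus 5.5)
Your Leibniz and flatness steps follow the paper's proof. $F_m$-linearity of $c$ gives the Leibniz rule. Integrability comes from the ring $G_m^+=E_m^+\oplus\Omega^1\oplus\Omega^1\oplus\Omega^2$ (modulo $p$-torsion), with its tautological level-$m$ enlargement structure, via the computation of \stack{07J6}. One then passes to the limit in $m$. Your checks of compatibility in $m$ and of functoriality only spell out what the paper leaves implicit.

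One wording correction: $G_m^+$ is not the triple diagonal modulo the square of its ideal. In that square-zero ring the two first-order directions multiply to zero. The cocycle identity there is therefore automatic and carries no curvature information. The point of $G_m^+$ is exactly the mixed product $\omega_1\wedge\omega_2'$ landing in the $\Omega^2$ summand.

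The step that fails as written is your sketch of the finite generation of $M$ over $E_\infty$. Lifting generators of $M_0$ does not produce generators of $M$. The map $E_m\to E_0$ is restriction from the closed tube of radius $p^{-1/p^m}$ to the smaller closed tube of radius $p^{-1}$, and a section can be a unit on the small tube without being one on the larger ones.

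Here is a concrete counterexample. Take $A=\Zp$, $B=\Zp\langle t\rangle$, $I=(p,t)$ and $\calM=\calO_\conv$. Then $M_m=E_m$, $M=E_\infty$, and $M\otimes_{E_\infty}E_m\cong M_m$ holds trivially.
\begin{itemize}
\item Since $t/p\in E_0^+$, the element $t^2/p=p(t/p)^2$ is topologically nilpotent in $E_0^+$. Hence $t^2-p=-p(1-t^2/p)$ is a unit of $E_0$ and generates $M_0=E_0$.
\item But $t^2-p$ vanishes at $t=\pm\sqrt{p}$. These points lie in the tube of every level $m\geq 1$, since $p^{-1/2}\leq p^{-1/p^m}$.
\item So $t^2-p$ generates neither $M_1$ nor $M$.
\end{itemize}
The vanishing of $R^1\lim$ in Proposition \ref{MLZero:p} does not rescue this, since it says nothing about numbers of generators.

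The paper's proof is equally silent on this point; it only says ``passing to the limit'', so you are not departing from it here. Still, the argument you indicate is not valid. Finite generation of $M$ needs genuine finiteness input about coherent modules on the quasi-Stein open tube. One possible route is local freeness of the isocrystal combined with a finiteness theorem for vector bundles of bounded rank on quasi-Stein spaces. Otherwise, finite generation should be stated as an explicit assumption.
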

\begin{proof}
The Leibniz rule for $\nabla$ follows from the fact that $c$ is $F_m$-linear. To check that $\nabla$ is integrable, we define $$G_m^+\coloneqq E^+_m\oplus (\Omega^1_{E_m^+/A})/p\textrm{-}\mr{tors} \oplus (\Omega^1_{E_m^+/A})/p\textrm{-}\mr{tors} \oplus (\Omega^2_{E_m^+/A})/p\textrm{-}\mr{tors}$$ with multiplication given by $$(f, \omega _1, \omega _2, \eta ) \cdot (f', \omega _1', \omega _2', \eta ') = (ff', f\omega _1' + f'\omega _1, f\omega _2' + f'\omega _2, f\eta ' + f'\eta + \omega _1 \wedge \omega _2' + \omega _1' \wedge \omega _2).$$
In this case, $(G_m^+)_{[m]}=(E_m^+)_{[m]}=C$ for $m\geq 2$, thus $G_m^+$ admits a tautological enlargement structure of $X$ of level $m$. The integrability of $\nabla$ is then obtained as in \stack{07J6}. Passing to the limit with respect to $m$ we get the desired result.
\end{proof}

\subsection{The affine de Rham comparison}
\label{TheAffineDeRham:s}
We can now identify the cohomology of the convergent stack with the de Rham
cohomology of the tube in the affine case. The computation proceeds by
resolving the stack with the Čech nerve of a single enlargement and untangling
the resulting double complex, one direction governed by the Poincaré lemma and
the other by the simplicial structure. Even in this section, we adopt the conventions of Notation \ref{Nota}.

\begin{lemm}\label{conv-local-acyclicity:l}
	If $\calM$ is a locally quasi-coherent sheaf of $\calO_{\mr{conv}}$-modules over $(X/A)_{\conv}^{(m),\mr{fg}}$, then for every $B \in (X/A)_{\conv}^{(m),\mr{fg}}$ we have
	$$H^i(B,\calM) = 0$$
	for $i > 0$.
\end{lemm}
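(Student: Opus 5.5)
Since $\calM$ is a sheaf on the localized site over $B$, I would compute $H^i(B,\calM)$ through Čech cohomology. The localized site $(X/A)_{\conv}^{(m),\mr{fg}}/B$ is, by construction, the category of topologically finitely generated $B$-algebras $B'$ in $\mr{Alg}^+_A$, with the enlargement structure inherited from $B$. Its coverings are $p$-completely étale coverings.

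\textbf{Step 1: reduce to Čech cohomology.} By Lemma \ref{Limits:l} and Lemma \ref{ComparisonSmallBig:l}, the site has fibre products: for $B\to B_1$ and $B\to B_2$ $p$-completely étale, $B_1\otimes^+_B B_2$ is again $p$-completely étale over $B$ and topologically finitely generated. By the Cartan--Leray type criterion \stack{03F9}, it then suffices to show two things:
\begin{itemize}
\item[(a)] for every $B'$ in the site and every $p$-completely étale covering of $B'$, the Čech higher cohomology of $\calM$ vanishes;
\item[(b)] this holds compatibly for all objects.
\end{itemize}
Both statements are really assertions about all objects of the site, so I prove (a) for an arbitrary $B'$ and rename it $B$.

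\textbf{Step 2: rewrite the Čech complex.} Since $B$ is Noetherian-adic, any $p$-completely étale covering can be refined by a finite one; this uses quasi-compactness of $\Spec(B/p)$. A finite family can then be replaced by the single faithfully flat $p$-completely étale algebra $B''=\prod_k B_k$. The Čech complex becomes
$$\calM(B)\to\calM(B'')\to\calM(B''\otimes^+_B B'')\to\cdots.$$
By local quasi-coherence, each term is $\calM(B)\otimes_{B}B''^{\otimes^+ n}$. Here I must use that these completed tensor products agree with ordinary tensor products. This holds because $B''^{\otimes^+n}$ is finite over the relevant Noetherian rings up to $p$-completion, and for Noetherian adic rings the completed tensor product of finite-type $p$-completely flat algebras is flat. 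After inverting $p$, the complex is therefore $\calM(B)\otimes_{B[1/p]}$ applied to the Amitsur complex of $B[\tfrac1p]\to B''[\tfrac1p]$.

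\textbf{Step 3: faithfully flat descent.} A $p$-completely étale map of Noetherian $p$-complete rings is flat (\stack{0912}). It is faithfully flat when surjective on $\Spec(-/p)$, since every maximal ideal of a $p$-adically complete ring contains $p$. Hence $B\to B''$ is faithfully flat, and the Amitsur complex $B\to B''\to B''\otimes_B B''\to\cdots$ is exact, even universally exact. Tensoring it with the $B$-module $\calM(B)$, first viewed as a $B[\tfrac1p]$-module, preserves exactness. This yields vanishing of higher Čech cohomology for every covering of every object, and hence $H^i(B,\calM)=0$ for $i>0$.

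\textbf{Main obstacle.} I expect the hardest point to be Step 2: justifying that $p$-completed tensor products modulo torsion, $\otimes^+$, coincide with ordinary tensor products in the Čech nerve, so that local quasi-coherence applies and descent is algebraic. This is exactly where the restriction to the $\mr{fg}$ site and the Noetherian hypothesis on $A$ are used. The $p$-torsion quotient is harmless, because $p$-completely flat algebras over $p$-torsion free Noetherian rings are $p$-torsion free.
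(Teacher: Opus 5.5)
Your overall route matches the paper's proof, which is just ``$p$-completely étale covers are fpqc covers by \stack{0912}, then use exactness of the Amitsur complex'': reduce to Čech cohomology, use local quasi-coherence, then faithful flatness and the Amitsur complex. The gap is in Step 2, the point you yourself flag as the main obstacle. You resolve it with a false claim: the completed tensor products $B''\otimes^{+}_B B''$ do \emph{not} agree with $B''\otimes_B B''$, not even after inverting $p$. Your justification only gives flatness of the completed products, not equality.

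A concrete example: take $B=\Zp\langle x\rangle$ with the Zariski cover by $B\langle 1/x\rangle$ and $B\langle 1/(1-x)\rangle$, and set $R=B[1/x]$ with $p$-adic completion $\widehat R$. Then $\widehat R\otimes^{+}_B\widehat R=\widehat R$. On the other hand, $\widehat R\otimes_B\widehat R=\widehat R\otimes_R\widehat R$ is strictly larger, even after inverting $p$, because $R[\tfrac1p]\to\widehat R[\tfrac1p]$ is not a ring epimorphism (it does not induce an isomorphism of fraction fields). So the Čech complex is not $\calM(B)\otimes_{B[1/p]}$ of the ordinary Amitsur complex of $B[\tfrac1p]\to B''[\tfrac1p]$, and Step 3 proves exactness of the wrong complex.

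What you actually need is the following. Local quasi-coherence identifies the Čech complex directly with $N\otimes_B K^\bullet$, where $N\coloneqq\calM(B)$ and $K^\bullet$ is the augmented \emph{completed} Amitsur complex $B\to B''\to B''\otimes^{+}_B B''\to\cdots$. No comparison with uncompleted tensor products is needed. However, exactness of $N\otimes_B K^\bullet$ for arbitrary $N$ does not follow formally from $K^\bullet$ being an exact complex of flat $B$-modules, since the complex is unbounded.

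One way to prove it:
\begin{itemize}
\item Since tensor products and cohomology commute with filtered colimits, reduce to $N$ finitely generated over the Noetherian ring $B$.
\item Each $K^n$ is topologically finitely generated over $A$, hence Noetherian and $p$-complete. So $N\otimes_B K^n$ is a finite $K^n$-module and therefore $p$-adically complete.
\item Hence $N\otimes_B K^\bullet=\lim_k (N\otimes_B K^\bullet)/p^k$, with termwise surjective transition maps.
\item Each $(N\otimes_B K^\bullet)/p^k$ is $N/p^k$ tensored with the ordinary Amitsur complex of the faithfully flat étale map $B/p^k\to B''/p^k$, hence exact.
\item The Milnor exact sequence then gives exactness of the limit.
\end{itemize}
This is where Noetherianity and the restriction to the $\mr{fg}$ site genuinely enter, rather than in an identification of $\otimes^{+}$ with $\otimes$. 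The paper's two-line proof leaves this point implicit, but it does not make the incorrect identification you rely on.
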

\begin{proof}By \stack{0912}, every $p$-completely étale cover of $B$ is an fpqc cover. The result then follows from the exactness of the Amitsur complex.
\end{proof}

\begin{coro}\label{ConvChaoticTopology:c}
	If we denote by $\calC^{(m)}$ the category $(X/A)_{\conv}^{(m),\mr{fg}}$ endowed with the chaotic topology and $\epsilon\colon \calC^{(m)}\to (X/A)_{\conv}^{(m),\mr{fg}}$ the functor of sites induced by the identity, then for every locally quasi-coherent sheaf of $\calO_{\mr{conv}}$-modules $\calM$ over $(X/A)_{\conv}^{(m),\mr{fg}}$, we have the following quasi-isomorphism 
	$$R\Gamma((X/A)_{\conv}^{(m)},\calM) \iso R\Gamma(\calC^{(m)},(\iota\circ\epsilon)^{-1}\calM).$$
\end{coro}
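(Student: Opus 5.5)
First I reduce from the big site to the small one. By Corollary \ref{CohomologyComparison:c}, $R\Gamma((X/A)_{\conv}^{(m)},\calM)$ agrees with $R\Gamma((X/A)_{\conv}^{(m),\mr{fg}},\iota^{-1}\calM)$. Here I read $\calM$ as a sheaf on the big site whose restriction $\iota^{-1}\calM$ is locally quasi-coherent, which is the hypothesis of the statement. It therefore suffices to show that the morphism of sites $\epsilon$ induces a quasi-isomorphism
$$R\Gamma((X/A)_{\conv}^{(m),\mr{fg}},\calN)\to R\Gamma(\calC^{(m)},\epsilon^{-1}\calN)$$
for $\calN=\iota^{-1}\calM$ locally quasi-coherent. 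On the chaotic site every presheaf is a sheaf, so $\epsilon^{-1}\calN$ is just $\calN$ viewed as a presheaf. The map comes from $\epsilon_*=$ forgetful and its adjunction. Equivalently, the statement says that $R\epsilon_*\calN=\calN$, i.e.\ $R^q\epsilon_*\calN=0$ for $q>0$.

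The key input is Lemma \ref{conv-local-acyclicity:l}. The sheaf $R^q\epsilon_*\calN$ on $\calC^{(m)}$ is the presheaf $B\mapsto H^q(B,\calN)$, where $H^q(B,-)$ denotes cohomology of the localised site over $B$; no sheafification is needed on the chaotic site. That lemma says this vanishes for $q>0$ and every $B$ in $(X/A)^{(m),\mr{fg}}_{\conv}$. Hence $R\epsilon_*\calN\simeq\epsilon_*\calN$.

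Taking global sections, $R\Gamma(\calC^{(m)},-)\circ R\epsilon_*=R\Gamma((X/A)^{(m),\mr{fg}}_{\conv},-)$ gives
$$R\Gamma((X/A)_{\conv}^{(m),\mr{fg}},\calN)\simeq R\Gamma(\calC^{(m)},\calN).$$
Composing with the first reduction yields the claimed quasi-isomorphism. Compatibility of the arrows is formal, because both come from the adjunction units.

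The main obstacle is the bookkeeping with $\epsilon$: the identity functor $\calC^{(m)}\to(X/A)^{(m),\mr{fg}}_{\conv}$ must be read so that pushforward is the forgetful map to presheaves, and the identification $R^q\epsilon_*\calN(B)=H^q(B,\calN)$ must be justified. I would check this as in \stack{072W}: a sheaf is injective in sheaves implies that it is acyclic for the presheaf-valued sections functor, so the derived functors of the inclusion sheaves $\to$ presheaves are $B\mapsto H^q(B,-)$.

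Since $\calC^{(m)}$ is a category with a possibly nonterminal collection of objects, global sections on the chaotic site are $\lim_{B}\calN(B)$. As derived functors compose correctly, no further computation is needed at this stage; the explicit evaluation of this limit via the Čech nerve of $E_m^+$ will come later.
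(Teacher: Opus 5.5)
Your argument is correct and follows the paper's proof. Lemma \ref{conv-local-acyclicity:l} kills the higher direct images $R^q\epsilon_*$, which are the presheaves $B\mapsto H^q(B,-)$, and composing derived functors then identifies the two cohomologies, as in \stack{07JK}. Your only addition is to make explicit the passage from $(X/A)^{(m)}_{\conv}$ to $(X/A)^{(m),\mr{fg}}_{\conv}$ via Corollary \ref{CohomologyComparison:c}, which the paper's proof leaves implicit.
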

\begin{proof}
	By Lemma \ref{conv-local-acyclicity:l},  the higher direct images $R^i\epsilon_*\left((\iota\circ\epsilon)^{-1}\calM\right)$ vanish for $i>0$. The result then follows from the observation that $\epsilon_*\left((\iota\circ\epsilon)^{-1}\calM\right)=\iota^{-1}\calM$. See \stack{07JK} for more details.
\end{proof}

\begin{cons}\label{CechNerve:c} In the situation of Notation \ref{Nota}, suppose that $B$ is the $p$-adic completion of a smooth $A$-algebra.
	Let $(E_{m}^+(*), {C})$ be the \v{C}ech nerve of $E_{m}^+$ in $(X/A)_{\conv}^{(m),\mr{fg}}$ for $m\ge 0.$ 
	For a locally quasi-coherent sheaf of $\calO_\conv$-modules $\calM$ over $(X/A)_{\conv}^{(m),\mr{fg}}$, set $M_m(b) \coloneqq \calM(E_{m}^+(b))$ and write $M_m$ for $M_m(0)$. 
	We also write $\Omega_m^{\bullet} \coloneqq \Omega_{E_m/A}^{\bullet}$ and $\Omega_m^{\bullet}(n) \coloneqq \Omega_{E_m(n)/A}^{\bullet}$. 
	Finally, denote by $M(*)$ the limit $\lim_m M_m(*)$, by $M$ the limit $\lim_m M_m$, and set $E_\infty \coloneqq \lim_m E_m$. If $\calM$ is a finitely generated isocrystal, by Lemma \ref{IsocrystalsToMIC:l} the $E_\infty$-module $M$ is equipped with a flat connection $\nabla\colon M \to M\otimes_{E_\infty} \Omega^1_{E_\infty/A}$ which produces a de Rham complex  $$M\otimes_{E_\infty}\Omega^\bullet_{E_\infty/A}.$$
\end{cons}

\begin{prop}\label{InfFirstColumn:p}
	If $\calM$ is a locally quasi-coherent sheaf over $(X/A)_{\conv}^{(m),\mr{fg}}$, then there is a natural  quasi-iso\-morphism $$R\Gamma((X/A)_{\conv}^{(m),\mr{fg}},\calM)\iso (M_m(0)\to M_m(1)\to M_m(2)\to\dots ).$$
\end{prop}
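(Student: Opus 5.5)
We reduce to Čech cohomology on the chaotic site $\calC^{(m)}$ and compute it with the Čech nerve of the single object $E_m^+$. By Corollary \ref{ConvChaoticTopology:c}, applied to the locally quasi-coherent $\calM$, the left hand side is quasi-isomorphic to $R\Gamma(\calC^{(m)},(\iota\circ\epsilon)^{-1}\calM)$, that is, to the cohomology of the presheaf $B'\mapsto \calM(B')$ on the category $(X/A)^{(m),\mr{fg}}_{\conv}$ with the chaotic topology. Here $X=\Spec(C)$ is affine and, as in Construction \ref{CechNerve:c}, $B$ is the completion of a smooth $A$-algebra. By Lemma \ref{Limits:l}, the category $(X/A)^{(m),\mr{fg}}_{\conv}$ has finite nonempty products, and these remain topologically finitely generated, since they are of the form $E^+_{B_1\otimes^+ B_2,m}(I)$. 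So the Čech nerve $E_m^+(*)$ of $E_m^+$ exists in the fg category, and $E_m^+(n)$ is the $(n+1)$-fold self-product.

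The key step is to show that the object $E_m^+$ is weakly final in $(X/A)^{(m),\mr{fg}}_{\conv}$: every enlargement $B'$ admits a morphism of enlargements $B'\to E_m^+$. For this, take an enlargement $B'$ with structure map $C\to B'_{[m]}$. By Theorem \ref{Quotient:t}, or more precisely the surjectivity of $\mathfrak{Y}(B')\to\mathfrak{Y}(B'_{[m]})$ coming from smoothness via \stack{07K4}, the composite $B\to C\to B'_{[m]}$ lifts to a map $B\to B'$. This lift sends $I$ into $N_m(B')$, so it sends $I^{(m)}$ into $pB'$. By Lemma \ref{Covering:l}, it therefore factors through $E_m^+$ compatibly with the enlargement structures.

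Given this, we apply the standard fact about presheaf cohomology on a category with products and a weakly final object $U$, as in \stack{07JM}: for any presheaf $\calF$, $R\Gamma$ on the chaotic site is computed by the Čech complex $\calF(U)\to\calF(U\times U)\to\cdots$. To see this, write $h_{U(*)}\to *$ for the simplicial presheaf given by the Čech nerve. The map $h_U\to *$ is an epimorphism of presheaves (not just sheaves) precisely because $U$ is weakly final. Hence the augmented simplicial presheaf $h_{U(\bullet)}\to *$ is a resolution, since its values on each object form contractible simplicial sets, being Čech nerves of nonempty sets. Its free abelian version is therefore a resolution of $\Z$. Applying $\Hom(-,\mathcal{I})$ for an injective $\mathcal{I}$ and taking a Cartan–Leray type double complex gives $R\Gamma(\calC^{(m)},\calF)\simeq(\calF(U(0))\to\calF(U(1))\to\cdots)$.

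We then take $\calF=(\iota\circ\epsilon)^{-1}\calM$ and $U=E_m^+$, which yields the complex $M_m(0)\to M_m(1)\to\cdots$. Naturality follows because every construction is functorial in $\calM$. The main obstacle is the weak finality: it requires the smooth lifting over the $p$-complete, not merely nilpotent, thickening $B'\to B'_{[m]}$, for which we rely on \stack{07K4} as in Theorem \ref{Quotient:t}, together with the check that the self-products stay inside the fg subcategory.
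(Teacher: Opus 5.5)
Your proposal is correct and follows essentially the same route as the paper: reduce to the chaotic site $\calC^{(m)}$ via Corollary \ref{ConvChaoticTopology:c}, use the smooth lifting from Theorem \ref{Quotient:t} (via \stack{07K4}) to show that $E_m^+$ is weakly final, and then apply \stack{07JM} to the Čech nerve $E_m^+(*)$. Your additional checks are details the paper leaves implicit or cites rather than proves: that the self-products $E^+_{B_1\otimes^+ B_2,m}(I)$ stay topologically finitely generated (using that $A$ is Noetherian), and the resolution argument underlying \stack{07JM}.
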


\begin{proof}This is proved as in \stack{07JN}. By Corollary \ref{ConvChaoticTopology:c} we may work on the chaotic site $\calC^{(m)}$.  
Theorem \ref{Quotient:t} gives that $E_m^+$ is a weakly final object in $(X/A)^{(m),\mr{fg}}_\mr{conv}$, hence also in $\calC^{(m)}$.  For a locally quasi-coherent sheaf $\calM$, the restriction of $\calM$ to $\calC^{(m)}$ is a sheaf and its value on the Čech nerve is the cosimplicial module $M_m(*)$.  By \stack{07JM} the total cohomology of $\calM$ on $\calC^{(m)}$ is computed by the complex associated to $M_m(*)$. The proposition follows.
\end{proof}

\begin{lemm}\label{ConvAcyclicColumns:l}
	The cosimplicial $E_m(*)$-module
	$$M_m(0) \otimes_{E_m(0)} \Omega_m^i(0) \to M_m(1) \otimes_{E_m(1)} \Omega_m^i(1) \to \cdots$$
	is homotopic to zero for every $i \geq 1$.
\end{lemm}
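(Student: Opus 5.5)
This is the convergent analogue of \stack{07L8}, the statement in the crystalline case that the cosimplicial module $\Omega^i_{D(*)}$ is homotopic to zero for $i\geq 1$. I plan to follow that argument, replacing divided power envelopes by the tubes $E_m^+(n)$.

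\emph{Step 1: reduce to the differentials alone.} Since $\calM$ is locally quasi-coherent and the coface and codegeneracy maps of the Čech nerve are morphisms of enlargements, I want to identify $M_m(n)\otimes_{E_m(n)}\Omega^i_m(n)$ with $M_m(0)\otimes_{E_m(0)}\Omega^i_m(n)$, using the base change along a fixed projection $E_m(0)\to E_m(n)$.

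This is the delicate point. The maps in the Čech nerve are not $p$-completely étale, so local quasi-coherence alone does not give the comparison isomorphisms. I expect to use the isocrystal property here, or, for merely locally quasi-coherent $\calM$, the structure of the nerve itself. By Theorem \ref{Quotient:t} and Lemma \ref{Covering:l}, $E_m^+(n)$ is the tube of level $m$ of $X$ diagonally embedded in $\Spf(B)^{n+1}$. Because $B$ is smooth, this tube is, locally on $\Spf(B)$ and after choosing étale coordinates $t_1,\dots,t_d$, the completion of $E_m^+(0)\langle (t_j\otimes1-1\otimes t_j)^{p^m}/p\rangle$ over $E_m^+(0)$. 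It is therefore flat over each factor. I will argue the functorial isomorphisms for the tensored cosimplicial module from this explicit description, taking the transition isomorphisms $M_m(0)\otimes_{E_m(0),p_k}E_m(n)\simeq M_m(n)$ as given, exactly as \stack{07L8} does for crystals.

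\emph{Step 2: the key structural input.} The cosimplicial module $\Omega^1_m(*)$ decomposes, up to these base changes, as $\Omega^1_{E_m(n)/A}\simeq \bigoplus_{k=0}^n \Omega^1_{E_m(0)/A}\otimes_{p_k}E_m(n)\oplus(\text{relative part})$. The relative part is generated by the differentials $d\bigl((t_j\otimes 1-1\otimes t_j)^{p^m}/p\bigr)$ and by $d(t_j\otimes1-1\otimes t_j)$. Concretely, $\Omega^1_{E_m(n)/A}$ is freely generated over $E_m(n)$ by the $dt_j^{(k)}$ for $k=0,\dots,n$, where $t_j^{(k)}$ is the coordinate on the $k$-th factor, since passing to $E_m^+$ only inverts differentials up to $p$-torsion. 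So after inverting $p$, $\Omega^1_m(*)\cong E_m(*)\otimes_{\Zp} \Zp^{d}\otimes \Zp[\Delta^*]$, where $\Zp[\Delta^n]$ denotes the free module on the $n+1$ vertices.

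\emph{Step 3: the homotopy.} The cosimplicial abelian group $n\mapsto \Zp^{\{0,\dots,n\}}$ is the standard one that is homotopic to zero after tensoring with any cosimplicial module; this is \stack{07L7}. Tensoring the homotopy with $M_m(*)$ handles $i=1$. For $i\geq 1$, $\Omega^i_m(*)$ is the $i$-th exterior power, and a retract of a tensor power of a contractible factor stays homotopic to zero by \stack{07KQ}/\stack{07L7}. This gives the result for all $i\geq1$.

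The main obstacle is the freeness claim in Step 2. The tube is a $p$-adic completion of a blow-up-type algebra, so one must check that $\Omega^1_{E_m^+(n)/A}[\tfrac1p]$ is free on the $dt_j^{(k)}$ and that it forms a cosimplicial module of the standard shape. I would verify this first for $B$ a completed polynomial algebra, where everything is explicit, and then pass to smooth $B$ by étale localisation using Lemma \ref{DifferentialsTauEdgedCrystals:l}.
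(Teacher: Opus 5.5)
Steps 2 and 3 are the paper's argument. As in \stack{07L9}, you identify $\Omega^1_m(*)$ with the standard cosimplicial module $n\mapsto\bigoplus_{k=0}^n\Omega^1_m(0)\otimes_{E_m(0),p_k}E_m(n)$, which is homotopic to zero as an $E_m(*)$-module; then \stack{07KQ} handles exterior powers and the tensor product with $M_m(*)$. Two corrections are needed.

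\emph{Drop Step 1.} Do not ``take it as given''; it is not needed. Tensoring an $E_m(*)$-linear contracting homotopy with an arbitrary cosimplicial $E_m(*)$-module requires no comparison isomorphisms. Moreover, isomorphisms $M_m(0)\otimes_{E_m(0),p_k}E_m(n)\simeq M_m(n)$ are false for a general locally quasi-coherent $\calM$, because the coface maps are not $p$-completely étale. Already $\calM=\Omega^1_\conv$ is a counterexample. Proposition \ref{Technical:p} applies the lemma in exactly that generality.

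\emph{Do not globalise by étale localisation.} A contracting homotopy does not glue along an étale cover, so the smooth case cannot be obtained from local coordinates this way. Use instead the coordinate-free identity $\Omega^1_{E_m(n)/A}=\Omega^1_{B(n)/A}\otimes_{B(n)}E_m(n)$, where $B(n)$ is the completed $(n+1)$-fold tensor power of $B$ over $A$. It holds because $B(n)$ is Noetherian, so $E_m(n)=B(n)\langle T_1,\dots,T_r\rangle[\tfrac1p]/(pT_i-x_i^{p^m})$ for generators $x_i$ of the ideal of $X$ in $B(n)$. Hence $dT_i=p^{-1}d(x_i^{p^m})$ contributes nothing new after inverting $p$. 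Combined with $\Omega^1_{B(n)/A}=\bigoplus_k p_k^*\Omega^1_{B/A}$, this gives the standard form directly. No freeness is needed, and there is no extra ``relative part''; your first displayed decomposition double counts. When $B$ is a completed polynomial algebra, as in Theorem \ref{convDeRhamComparison1:t} and Proposition \ref{Technical:p}, your global coordinates already suffice.
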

\begin{proof} Arguing as in \stack{07L9}, the complex	$\Omega_{m}^{1}(*)$ is homotopic to zero as a $E_m(*)$-module. The result then follows from \stack{07KQ}.
\end{proof}

\begin{lemm}[Bhatt--de Jong]\label{BdJ:l} Let $\mathcal{A}$ be an abelian category and let 
	$$\xymatrix{
		K^{\bullet}(0)\ar[r]<1.5pt>\ar[r]<-1.5pt>& K^{\bullet}(1)\ar[r]<3pt>\ar[r]\ar[r]<-3pt>&\cdots
	}$$
	be a cosimplicial cochain complex of $\mathcal{A}$ such that for every $b\geq 0$ the complex	$K^{\bullet}(b)$ is concentrated in non-negative degrees. For $0\leq i \leq b$ write $\alpha_{i,b}:K^{\bullet}(0)\to  K^{\bullet}(b)$ for the morphism of complexes induced by the morphism $[0]\to [0,\dots,b]$ which sends $0$ to $i$. Suppose that the following conditions are satisfied.
	
	\begin{itemize}
		
		\item[\normalfont{(1)}] For every $0\leq i \leq b$, the morphism $\alpha_{i,b}$ is a quasi-isomorphism.
		\item[\normalfont{(2)}] For every $a>0$, the cochain complex associated to $K^{a}(*)$ is acyclic.
	\end{itemize}
	
	If  $K^{\bullet,*}$ is the double complex associated to $K^\bullet(*)$, then both $K^{0,*}$ and $K^{\bullet,0}$ are quasi-isomorphic to $\mr{Tot}(K^{\bullet,*})$.
\end{lemm}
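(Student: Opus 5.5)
The plan is to compare both edges of the first-quadrant double complex $K^{a,b}\coloneqq K^a(b)$ with its total complex. Write $d_v$ for the internal differential of $K^\bullet(b)$ (degree $a$) and $d_h=\sum_j(-1)^j\delta^j$ for the cosimplicial differential (degree $b$). Everything lives in the first quadrant, so $\mr{Tot}$ is a finite direct sum in each degree. Filtrations by rows or columns are therefore finite in each total degree, and the associated spectral sequences converge. This is the standard argument of \cite{BdJ11} (cf.\ the Stacks project, Tag 07LC).

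\textbf{Step 1: $K^{0,*}\to\mr{Tot}$.} I would use the filtration by the $a$-degree, i.e.\ by rows indexed by $a$. The $E_1$-page, obtained by taking $d_h$-cohomology first, has $a$-th row $H^b(K^a(*))$. By hypothesis (2) this vanishes for $a>0$. Only the row $a=0$ survives, and it equals $H^b(K^{0,*})$. The spectral sequence then degenerates, and the inclusion $K^{0,*}\hookrightarrow\mr{Tot}(K^{\bullet,*})$ is a quasi-isomorphism. One subtlety needs care: $K^{0,*}$ must be a subcomplex of $\mr{Tot}$. This holds because $d_v$ goes from $a=0$ up to $a=1$, so one should instead read $K^{0,*}$ as the kernel of $d_v$ on the bottom row, or phrase the claim as $\mr{Tot}\to$ a quotient. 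The clean approach is to consider the subcomplex $\ker(d_v\colon K^{0,*}\to K^{1,*})$. Combining the acyclicity of the rows $a\geq1$ with a short exact sequence of double complexes identifies the relevant object; I would match the exact convention in the paper's intended application, where $K^{0,*}$ is $M_m(*)$ mapping into the de Rham double complex.

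\textbf{Step 2: $K^{\bullet,0}\to\mr{Tot}$.} Filter by the $b$-degree instead. Taking $d_v$-cohomology first, the $E_1$-term is the cosimplicial object $b\mapsto H^a(K^\bullet(b))$. Hypothesis (1) says every coface map $H^a(K^\bullet(0))\to H^a(K^\bullet(b))$ induced by $\alpha_{i,b}$ is an isomorphism. Since every cosimplicial structure map composed with some $\alpha_{i,b}$ is another $\alpha_{j,b'}$, all cosimplicial maps of $b\mapsto H^a(K^\bullet(b))$ are isomorphisms compatible with the identification with $H^a(K^\bullet(0))$. So this is the constant cosimplicial object on $H^a(K^\bullet(0))$. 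The associated complex of a constant cosimplicial object has differentials alternating between $0$ and $\mr{id}$, so its cohomology is $H^a(K^\bullet(0))$ in degree $b=0$ and zero otherwise. The $E_2$-page is thus concentrated in the column $b=0$ and equals $H^a(K^\bullet(0))$. Hence $K^{\bullet,0}\to\mr{Tot}$, the inclusion of the column $b=0$ as a subcomplex via $\ker\delta$ or directly, is a quasi-isomorphism.

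\textbf{Main obstacle.} The only real point is showing that, after taking cohomology, the cosimplicial object is constant. Hypothesis (1) is stated only for the maps out of $K(0)$, and we need all coface and codegeneracy maps to be isomorphisms. This follows because each $H(K(b))$ is identified with $H(K(0))$ through any $\alpha_{i,b}$, and functoriality of $[0]\to[b]\to[b']$ forces compatibility. The remaining work is bookkeeping: getting the directions of the edge maps and the signs in $\mr{Tot}$ right.
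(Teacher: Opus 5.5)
Your spectral-sequence computations are the paper's proof. Filtering by $a$, hypothesis (2) kills every row $a>0$ of $E_1$. Filtering by $b$, hypothesis (1) makes $b\mapsto H^a(K^\bullet(b))$ constant: the paper makes your ``functoriality'' remark precise by noting that the map induced by $[0,\dots,b]\to[0]$ is a common inverse of all the $\alpha_{i,b}$ on cohomology, so these do not depend on $i$. The complex $V\xrightarrow{0}V\xrightarrow{\mr{id}}V\xrightarrow{0}\cdots$ of a constant cosimplicial object then has cohomology only in degree $0$. Your pairing of (2) with $K^{0,*}$ and of (1) with $K^{\bullet,0}$ is the correct one for the indexing $K^{a,b}=K^a(b)$ of the statement; the paper's write-up of the proof indexes the double complex the other way round.

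What needs fixing is the choice of comparison maps, where several of your suggestions would fail.

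\emph{The row and column are quotients, not subcomplexes.} Both differentials raise degree, so $K^{\geq 1,*}$ and $K^{\bullet,\geq 1}$ are sub-double complexes. Hence the row $a=0$ and the column $b=0$ are quotient complexes of $\mr{Tot}(K^{\bullet,*})$. For example, $d_h$ maps $K^{a,0}$ to $K^{a,1}$ by $\delta^0-\delta^1$, so the column $b=0$ is not closed under the total differential.

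\emph{The correct maps.} The quasi-isomorphisms are the two projections $K^{0,*}\leftarrow \mr{Tot}(K^{\bullet,*})\to K^{\bullet,0}$, which are exactly the edge maps of your two spectral sequences. Equivalently, their kernels are acyclic:
\begin{itemize}
\item $\mr{Tot}(K^{\geq1,*})$ is acyclic by (2);
\item $\mr{Tot}(K^{\bullet,\geq1})$ is acyclic because in its column spectral sequence each $E_1$-row is $V\xrightarrow{\mr{id}}V\xrightarrow{0}V\xrightarrow{\mr{id}}\cdots$, starting at $b=1$, with $V=H^a(K^\bullet(0))$.
\end{itemize}

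\emph{Your alternatives fail.} The subcomplexes you offer instead are not quasi-isomorphic to $\mr{Tot}$ unless its higher cohomology vanishes:
\begin{itemize}
\item $\ker(d_v)$ on the row $a=0$ is the constant cosimplicial object $b\mapsto H^0(K^\bullet(b))$, whose complex has cohomology $H^0(K^\bullet(0))$ in degree $0$ only;
\item by (2), $\ker(d_h)$ on the column $b=0$ is just $H^0(K^{0,*})$ placed in degree $0$.
\end{itemize}

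So in Step 1, keep only the short exact sequence $0\to K^{\geq1,*}\to K^{\bullet,*}\to K^{0,*}\to 0$. In Step 2, replace the ``inclusion'' of the column $b=0$ by the projection onto it. The lemma is then realised by the roof $K^{0,*}\leftarrow\mr{Tot}(K^{\bullet,*})\to K^{\bullet,0}$.
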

\begin{proof}
	We first note that for every $0\leq i \leq b$ and $j\geq 0$, the isomorphisms $$\alpha_{i,b}\colon H^j(K^\bullet(0))\iso H^j(K^\bullet(b))$$ have as common inverse the morphism $$H^j(K^\bullet(b))\to H^j(K^\bullet(0))$$ induced by $[0,\dots,b]\to [0].$ In particular, they are independent of $i$. When taking the first spectral sequence associated to the double complex $K^{\bullet,*}$, the differentials of the first page are $$H^j(K^{0,*})\xrightarrow{0}H^j(K^{1,*})\xrightarrow{\sim}H^j(K^{2,*})\xrightarrow{0}H^j(K^{3,*})\to\dots$$
	This implies that $K^{0,*}$ is quasi-isomorphic to $\mr{Tot}(K^{\bullet,*})$. Looking at the second spectral sequence, Condition (2) implies that $K^{\bullet,0}$ is also quasi-isomorphic to $\mr{Tot}(K^{\bullet,*})$.
\end{proof}

\begin{theo}\label{convDeRhamComparison1:t}
	Suppose $A\in \mr{Alg}^{+}_{\Zp}$ Noetherian and let $P$ be a finitely generated polynomial $A$-algebra and let $I\subseteq P$ be an ideal containing $p$. If $X\coloneqq \Spec(P/I)$, then for every finitely generated convergent isocrystal $\calM$ of $X$ over $A$, there exists a canonical quasi-isomorphism
	$$R\Gamma((X/A)_{\conv}, \calM) \iso M\otimes_{E_\infty}\Omega^\bullet_{E_\infty/A},$$ where the right hand side is as in Construction \ref{CechNerve:c}.
\end{theo}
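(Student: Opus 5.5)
We follow the Bhatt--de Jong strategy of \stack{07LG}, carried out at each finite level $m$ and then passed to the limit. By Lemma \ref{LimitModules:l},
$$R\Gamma((X/A)_{\conv},\calM)=R\lim_m R\Gamma((X/A)^{(m)}_{\conv},\calM_m),$$
and by Corollary \ref{CohomologyComparison:c} each term may be computed on the topologically finitely generated site $(X/A)^{(m),\mr{fg}}_{\conv}$. Since $P$ is a polynomial algebra, $B\coloneqq\widehat{P}$ is the $p$-adic completion of a smooth $A$-algebra, so Construction \ref{CechNerve:c} applies. Proposition \ref{InfFirstColumn:p} then identifies $R\Gamma((X/A)^{(m),\mr{fg}}_{\conv},\calM_m)$ with the cosimplicial complex $M_m(*)$.

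For each $m$ we form the cosimplicial cochain complex $K_m^\bullet(b)\coloneqq M_m(b)\otimes_{E_m(b)}\Omega^\bullet_m(b)$. Here the connection on $M_m(b)$ comes from Construction \ref{IsocrystalConnection:c}, applied to $E_m^+(b)$ as the tube of $X$ in $\mathfrak{Y}^{b+1}$ (Construction \ref{SimplicialTubes:c}). Condition (2) of Lemma \ref{BdJ:l} is exactly Lemma \ref{ConvAcyclicColumns:l}. Condition (1) is where the convergent Poincar\'e lemma enters, and it cannot hold at a fixed level $m$, since the integral of a level-$(m+1)$ form only lands in level $m$ (Lemma \ref{PoincareLemmaConvergent:l}). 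So we first pass to the limit: set $K^\bullet(b)\coloneqq\lim_m K^\bullet_m(b)=M(b)\otimes_{E_\infty(b)}\Omega^\bullet_{E_\infty(b)/A}$.

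The tube $E_\infty(b)$ of the diagonal in $P^{\otimes(b+1)}$ is, via the coordinates $t\otimes1-1\otimes t$, the tube attached to a polynomial algebra over $P$ with augmentation ideal $J$ and $K=IP+J$. Lemma \ref{ConsequencePoincare:l} therefore shows that each $\alpha_{i,b}$ is a quasi-isomorphism, using the isocrystal property $M(b)=M\otimes_{E_\infty}E_\infty(b)$ (one must check that this holds after passing to the limit). Acyclicity in condition (2) also survives the limit, because the homotopy of Lemma \ref{ConvAcyclicColumns:l} is compatible with transition maps (\stack{07L9} is functorial). Lemma \ref{BdJ:l} then gives
$$M(*)=K^{0,*}\simeq\mr{Tot}\simeq K^{\bullet,0}=M\otimes_{E_\infty}\Omega^\bullet_{E_\infty/A}.$$

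It remains to identify $M(*)=\lim_m M_m(*)$ with $R\lim_m$ of the complexes $M_m(*)$, and likewise for the de Rham terms. This is the main obstacle, and Proposition \ref{MLZero:p} is designed for it. The finitely generated $E_m(b)$-modules $M_m(b)$ satisfy $M_{m+1}(b)\otimes E_m(b)\cong M_m(b)$ by the isocrystal property, so the hypotheses hold and $R^1\lim$ vanishes termwise. The same holds for $M_m(b)\otimes\Omega^i_m(b)$, which are finitely generated by Lemma \ref{DifferentialsTauEdgedCrystals:l}. Hence $R\lim$ commutes with the cosimplicial and de Rham totalisations and is computed by naive limits.

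The delicate point is checking the surjectivity hypothesis for the \v{C}ech nerve terms $E_m(b)$, whose tubes are themselves of the form $E_{P^{\otimes b+1},m}$. This follows by applying Proposition \ref{MLZero:p} with $B$ replaced by $\widehat{P^{\otimes (b+1)}}$. Canonicity follows since every identification used is natural in $\calM$.
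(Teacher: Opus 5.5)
Your proposal is correct and follows essentially the same route as the paper. It computes each level by the \v{C}ech nerve (Corollaries \ref{CohomologyComparison:c} and \ref{ConvChaoticTopology:c}, Proposition \ref{InfFirstColumn:p}), then passes to the naive limit $M(*)$ via Lemma \ref{LimitModules:l} and Proposition \ref{MLZero:p}. Finally it applies Lemma \ref{BdJ:l} to the limit double complex $M\otimes_{E_\infty}\Omega^a_{E_\infty(b)/A}$, using Lemma \ref{ConsequencePoincare:l} for condition (1) and Lemma \ref{ConvAcyclicColumns:l} for condition (2). Your extra remarks address points the paper leaves implicit: that the homotopies of Lemma \ref{ConvAcyclicColumns:l} are compatible in $m$, and that the identification $M(b)\cong M\otimes_{E_\infty}E_\infty(b)$ must be checked after passing to the limit.
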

\begin{proof}
	For $m\geq 1$, by Corollary \ref{CohomologyComparison:c} and Corollary \ref{ConvChaoticTopology:c}, compute cohomology using the \v{C}ech nerve
	$${R}\Gamma((X/A)_{\conv}^{(m)}, \calM) \iso \left( M_m(0) \to M_m(1) \to M_m(2) \to \cdots \right).$$
	By combining Lemma \ref{LimitModules:l} and Proposition \ref{MLZero:p}, passing to the limit, we deduce that $${R}\Gamma((X/A)_{\conv}, \calM) \iso \left( M(0) \to M(1) \to M(2) \to \cdots \right).$$
	Consider the double complex $K^{\bullet,*}$ defined by
	$$K^{a,b} \coloneqq M \otimes_{E_\infty} \Omega_{E_\infty(b)/A}^{a}.$$
	By Lemma \ref{ConvAcyclicColumns:l}, the complex $K^{a,*}$ is acyclic for $a>0$. On the other hand, for fixed $b$, the complex $K^{\bullet,b}$ is the de Rham complex of $M$ over $E_\infty(b)$. By the isocrystal condition and Lemma \ref{ConsequencePoincare:l}, coface maps induce quasi-isomorphisms. Apply Lemma \ref{BdJ:l} to conclude that $\mr{Tot}(K^{\bullet,*})$ is quasi-isomorphic to both $K^{\bullet,0}$ and $K^{0,*}$.
\end{proof}

\subsection{The comparison for smooth formal schemes}
We extend the comparison from polynomial to smooth affine embeddings (Theorem
\ref{convDeRhamComparison2:t}). We then globalise the result in Corollary
\ref{GlobalconvDeRhamComparison:c}, extending \cite[Thm.~0.6.6]{OgusCT} in the relative setting.
\begin{cons}\label{SmoothEmbedding:c}
	For a Noetherian $A\in\mr{Alg}^{+}_{\Zp}$, let $A\to S\to C$ be ring maps with $A\to S$ smooth and $S\to C$ surjective with kernel $J$. Choose a finitely generated polynomial $A$-algebra $P$, a surjection $\pi\colon P\twoheadrightarrow S$ of $A$-algebras with kernel $K$. By the smoothness of $A\to S$ there exists a homomorphism
$$\sigma\colon S\longrightarrow P^\wedge_K$$
such that $\pi\circ \sigma=\mr{id}_{S}$, where $P^\wedge_K=\lim_n P/K^n$ denotes the $K$-adic completion of $P$ and $\pi\colon P^\wedge_K\twoheadrightarrow S$ is the surjection induced by $\pi$.
\end{cons}
\begin{lemm}\label{ConvergentAffineSmooth:l}In the situation of Construction \ref{SmoothEmbedding:c}, writing $I=\Ker(P\twoheadrightarrow C)$ and $E_m^+$ (resp. $F^+_m$) for $E^+_{P,m}(I)$ (resp. $E^+_{S,m}(J)$), the data $(\pi,\sigma)$ determines, simultaneously for every $m\geq 0$, $A$-algebra homomorphisms
$$a_m\colon E_m^+\to F_m^+,\qquad b_m\colon F_m^+\longrightarrow E_m^+,$$
compatible with the structural maps $C\to (E_m^+)_{[m]}$ and $C\to (F_m^+)_{[m]}$ and with the inclusions $E_{m+1}^+\hookrightarrow E_m^+$, $F_{m+1}^+\hookrightarrow F_m^+$, satisfying $a_m\circ b_m=\mr{id}_{F_m^+}$.
\end{lemm}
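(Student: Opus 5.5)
The natural route is to use the universal property of the tubes from Lemma \ref{Covering:l}, or rather its ring-theoretic form. For a $p$-torsion free ring $R$, an ideal $I\subseteq R$ containing $p$, and $D\in\mr{Alg}^+_A$, a ring map $R\to D$ extends, necessarily uniquely, to $E^+_{R,m}(I)\to D$ exactly when it sends $I^{(m)}$ into $pD$. Uniqueness holds because $D$ is $p$-torsion free and $p$-complete. Compatibility with the inclusions $E^+_{m+1}\hookrightarrow E^+_m$, $F^+_{m+1}\hookrightarrow F^+_m$ then comes for free: both composites restrict to the same map on $P$ (resp. $S$), so they agree by uniqueness.

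\textbf{Construction of $a_m$.} Take $a_m$ to be the extension of $\pi\colon P\to S\to F^+_m$. Since $\pi(I)=J$, we get $\pi(I^{(m)})\subseteq J^{(m)}\subseteq pF_m^+$. The map on $(-)_{[m]}$ is compatible with $C$ because $P\to S\to C$ is the given surjection.

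\textbf{Construction of $b_m$.} Extend a map $S\to E_m^+$ built from $\sigma$. The key point is that $\sigma$ lands in $P^\wedge_K$, not in $P$, so I first need a map $P^\wedge_K\to E_m^+$ extending $P\to E_m^+$. Since $K\subseteq I$, every $x\in K$ satisfies $x^{p^m}\in pE_m^+$. Hence $K^{p^m}$ maps into $pE_m^+$: a product of $p^m$ elements is controlled by the $p^m$-th powers up to multinomial terms, and for a finitely generated $K=(k_1,\dots,k_r)$ we have $K^{r(p^m-1)+1}\subseteq (k_i^{p^m})$. So $K$ is topologically nilpotent in the $p$-complete ring $E_m^+$, and $P\to E_m^+$ extends continuously to $P^\wedge_K\to E_m^+$. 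Here $A$ Noetherian makes $P$ Noetherian and $K$ finitely generated. Call the composite $S\xrightarrow{\sigma}P^\wedge_K\to E_m^+$ by $\beta$.

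\textbf{The main obstacle: $\beta(J^{(m)})\subseteq pE_m^+$.} For $y\in J$, $\sigma(y)$ lies in the preimage of $J$ in $P^\wedge_K$, which is $I P^\wedge_K+K P^\wedge_K$, that is, $IP^\wedge_K$. Write $\sigma(y)=\sum f_i x_i$ with $x_i\in I$ generators and $f_i\in P^\wedge_K$. I would then argue that $(\sum f_ix_i)^{p^m}\in (x_i^{p^m})+p(\cdots)$, using that $(a+b)^{p^m}\equiv a^{p^m}+b^{p^m}$ modulo $p$ in every ring. Consequently $\beta(y)^{p^m}\in pE_m^+$. Once this holds, $\beta$ extends to $b_m\colon F_m^+\to E_m^+$. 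Compatibility with $C$ follows because $\pi\circ\sigma=\mr{id}$ and the structure map of $E_m^+$ factors through $P\to C$.

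\textbf{Identity $a_m\circ b_m=\mr{id}$.} Both sides are extensions of maps $S\to F^+_m$. The composite $S\to P^\wedge_K\to E^+_m\to F^+_m$ equals the map induced by $\pi\circ\sigma=\mr{id}_S$, once one checks that $a_m$ restricted to the image of $P^\wedge_K$ is the continuous extension of $\pi$, which holds by continuity. Uniqueness of extensions then gives $a_m\circ b_m=\mr{id}_{F^+_m}$.
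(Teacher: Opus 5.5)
Your proposal is correct and follows the paper's proof. As there, $a_m$ is induced by $\pi$, and $b_m$ comes from extending $P\to E_m^+$ continuously to $P^\wedge_K$ (the paper phrases this as $I$-adic completeness of $E_m^+$), composing with $\sigma$, and checking $\beta(J)\subseteq N_m(E_m^+)$, which your argument via $\sigma(J)\subseteq IP^\wedge_K$ makes explicit; $a_m\circ b_m=\mr{id}$ then comes from $\pi\circ\sigma=\mr{id}_S$.

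The one slip is your claim that $K^{p^m}$ maps into $pE_m^+$, which is false in general: for $p=2$, $m=1$, $P=\Z_2[x,y]$, $K=(x,y)$, $I=(2,x,y)$, one has $xy\notin 2E_1^+$. The pigeonhole bound $K^{r(p^m-1)+1}\subseteq (k_1^{p^m},\dots,k_r^{p^m})$ that you state right after is all you need, and it suffices.
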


\begin{proof}
	To construct $a_m$, just note that $\pi$ induces homomorphisms $$P[\tfrac{I^{(m)}}{p}]\to S[\tfrac{J^{(m)}}{p}],$$ and $p$-adic completion yields morphisms $a_m\colon E_m^+\to F_m^+$ for $m\geq 0$.
For $b_m$, first note that since $I$ is finitely generated, the ring $E_m^+$ is $I$-adically complete for every $m\geq 0$ and, by construction, we have that $K\subseteq I$. We deduce that the natural morphism $P\to E_m^+$ extends canonically to a morphism $$\iota_m\colon P^\wedge_K\to E_m^+.$$ If $\beta_m\coloneqq \iota_m\circ \sigma$, then $\beta_m(J)\subseteq N_m(E_m^+)$, which implies $\beta_m\bigl(J^{(m)}\bigr)\subseteq pE_m^+$. The $p$-torsion-freeness of $E_m^+$ allows a unique extension of $\beta_m$ to an $A$-algebra homomorphism $S[\tfrac{J^{(m)}}{p}]\to E_m^+$. After $p$-adic completion, we obtain
$$b_m\colon F_m^+\to E_m^+.$$
The homomorphisms $a_m$ and $b_m$ satisfy the desired compatibilities by construction, and the identity $a_m\circ b_m=\mr{id}_{F_m^+}$ follows from the identity $\pi\circ \sigma=\mr{id}_{S}$.
\end{proof}

\begin{theo}\label{convDeRhamComparison2:t}
	For a Noetherian $A\in\mr{Alg}^{+}_{\Zp}$, let $A\to S\to C$ be ring maps with $A\to S$ smooth and $S\to C$ surjective with kernel $J$. For every finitely generated convergent isocrystal $\calM$ of $\Spec(C)$ over $A$, there exists a canonical quasi-isomorphism
	$$R\Gamma((X/A)_{\conv}, \calM) \iso M_S\otimes_{F_\infty} \Omega^\bullet_{F_\infty/A},$$ where $F_\infty\coloneqq E_{S,\infty}(J)$ and $M_S\in \mathrm{MIC}^\mathrm{fg}(S[\tfrac1p],J)$ is the module with flat connection associated to $\calM$ by Lemma \ref{IsocrystalsToMIC:l}.
\end{theo}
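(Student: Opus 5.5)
The plan is to reduce to Theorem \ref{convDeRhamComparison1:t} via the retraction data of Lemma \ref{ConvergentAffineSmooth:l}. Choose $P$, $\pi$, $\sigma$ as in Construction \ref{SmoothEmbedding:c}, and set $I=\Ker(P\twoheadrightarrow C)$. Theorem \ref{convDeRhamComparison1:t} gives a canonical quasi-isomorphism
$$R\Gamma((X/A)_\conv,\calM)\iso M_P\otimes_{E_\infty}\Omega^\bullet_{E_\infty/A},$$
where $E_\infty=E_{P,\infty}(I)$ and $(M_P,\nabla)$ is the connection attached to $\calM$ via $E_m^+$. So it suffices to produce a quasi-isomorphism between the de Rham complexes of $M_P$ over $E_\infty$ and of $M_S$ over $F_\infty$, and then check that the composite is independent of the choices.

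The maps $a_m$ and $b_m$ are morphisms of enlargements of level $m$, since they are compatible with the maps from $C$. The isocrystal property therefore identifies $M_P=M_S\otimes_{F_\infty,b}E_\infty$ and $M_S=M_P\otimes_{E_\infty,a}F_\infty$, after passing to the limit over $m$. Construction \ref{IsocrystalConnection:c} is functorial in enlargements, because the square-zero thickenings $F_m^+$ are functorial. Hence $a$ and $b$ induce morphisms of de Rham complexes
$$b^*\colon M_S\otimes\Omega^\bullet_{F_\infty/A}\to M_P\otimes\Omega^\bullet_{E_\infty/A},\qquad a^*\colon M_P\otimes\Omega^\bullet_{E_\infty/A}\to M_S\otimes\Omega^\bullet_{F_\infty/A},$$
with $a^*\circ b^*=\mr{id}$ by Lemma \ref{ConvergentAffineSmooth:l}. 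It remains to show that $b^*\circ a^*$ is homotopic to the identity, or equivalently that $b^*$ is a quasi-isomorphism.

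To do this, I would locally identify $E_\infty$ with a tube of a polynomial extension of $F_\infty$. Since $S$ is smooth, $K/K^2$ is projective, and after Zariski localisation on $\Spec S$ we may choose generators $g_1,\dots,g_r$ of $K$ forming a regular sequence with $dg_i$ part of a basis. Writing $t_i=g_i$, the completed map $F_m^+\langle t_i\rangle$-type algebra, namely the tube $E^+_{F\otimes A[t],m}$ of the ideal $J+(t)$, should be isomorphic to $E_m^+$ via $b_m$ together with $t_i\mapsto g_i$. Here I use that $\sigma$ splits $P^\wedge_K\simeq S[[t]]$. Granting this, Lemma \ref{ConsequencePoincare:l}, applied with base $S$, ideal $J$ and polynomial algebra $S[t_1,\dots,t_r]$, shows that $b^*$ is a quasi-isomorphism. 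To glue the local statements, I would use Zariski descent on $\Spec S$, which is harmless because both sides satisfy descent for the de Rham complexes of tubes.

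An alternative that avoids the local identification is to rerun the proof of Theorem \ref{convDeRhamComparison1:t} directly with $F_m^+$ in place of $E_m^+$. Theorem \ref{Quotient:t}, applied with $\mathfrak{Y}=\Spf(\hat S)$, shows that $F_m^+$ is weakly final. The only input that needs the polynomial hypothesis is the Poincaré lemma used in Lemma \ref{ConsequencePoincare:l}, and that is applied only to the diagonal extensions $F_\infty(b)$ over $F_\infty$. Those extensions are étale-locally tubes of polynomial algebras over $F_\infty$ by the same regular sequence argument. I regard this local structure result, which says that the diagonal tube of a smooth algebra is a relative open polydisc of radius $p^{-1/p^m}$ in the limit, as the main obstacle. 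I would prove it first for $\mathbb{A}^n$, and then transfer it along the étale coordinates of $S$ using Lemma \ref{EtaleLifting:l} and Lemma \ref{RadicalPCompletelyEtale:l}. Canonicity then follows because any two choices are compared through a common refinement, for example the product embedding, on which both maps are quasi-isomorphisms compatible with the comparison to $R\Gamma$.
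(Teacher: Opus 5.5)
Your proposal is correct in outline and shares the paper's skeleton. Both arguments reduce to Theorem \ref{convDeRhamComparison1:t} for the polynomial embedding, use $a,b$ from Lemma \ref{ConvergentAffineSmooth:l} with $a\circ b=\mathrm{id}$, and then show that the remaining map is a quasi-isomorphism. The key step is different, however.

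The paper never describes $E_\infty$ in terms of $F_\infty$. Following \stack{07LH}, it factors $\rho=b_\infty\circ a_\infty$ as $\mu\circ\lambda$ through the tube $E'_\infty$ of $P[\xi_1,\dots,\xi_n]$ along $(I,\xi_1,\dots,\xi_n)$, where $\lambda(x_i)=x_i+\xi_i$ and $\mu(\xi_i)=\rho(x_i)-x_i$. The shear automorphism $\alpha$ turns $\lambda$ into the tautological inclusion, and $\mu$ is a left inverse of that inclusion. So Lemma \ref{ConsequencePoincare:l}, applied over the polynomial base $P$, handles both maps. This argument is global and purely formal: it needs no structure theory of $P^\wedge_K$ and no localisation.

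You instead show directly that $b^*$ is a quasi-isomorphism, by identifying $E_m^+$ with $E^+_{S[t],m}(JS[t]+(t))$ via $\sigma$ and $t_i\mapsto g_i$. This identification is correct. It only needs the $g_i\in K$ to induce a basis of $K/K^2$, so that $S[[t]]\iso P^\wedge_K$, together with the fact that $(t)$-adically convergent series converge $p$-adically in these tubes. It buys a more geometric statement: $E_\infty$ is a relative open polydisc over $F_\infty$, and Lemma \ref{ConsequencePoincare:l} is applied over the smooth base $S$.

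The price is that $K/K^2$ is only projective, so you must localise and then glue. You call the gluing harmless, but you assert it rather than prove it. It needs two things:
\begin{itemize}
\item exactness of \v{C}ech complexes of finitely generated modules over the Noetherian rings $E_m^+\langle 1/f\rangle$ and $F_m^+\langle 1/f\rangle$ at each level $m$;
\item a vanishing of $R^1\lim_m$, as in Proposition \ref{MLZero:p}, to pass to $E_\infty$ and $F_\infty$.
\end{itemize}
You can avoid localisation altogether. Choose an $S$-linear lift $K/K^2\to P^\wedge_K$ (with $S$ acting through $\sigma$); together with $\sigma$ it identifies $E_\infty$ with the tube of $\mathrm{Sym}_S(K/K^2)$. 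Now pick $Q'$ with $K/K^2\oplus Q'$ free. Over $F_\infty$, that tube is a retract of the tube of the polynomial algebra $\mathrm{Sym}_S(K/K^2\oplus Q')$, and a retract of a quasi-isomorphism is a quasi-isomorphism.

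Your alternative of rerunning Theorem \ref{convDeRhamComparison1:t} with $F_m^+$ in place of $E_m^+$ is unnecessary. Your discussion of independence of choices goes beyond what the paper actually verifies.
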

\begin{proof}Write $X=\Spec(C)$, so that we have ring maps $A\to S\to C$ with $A\to S$ smooth and $S\to C$ surjective with kernel $J$. As in Construction \ref{SmoothEmbedding:c}, choose a finitely generated polynomial $A$-algebra $P$, a surjection $\pi\colon P\twoheadrightarrow S$ with kernel $K$, and an $A$-algebra section $\sigma\colon S\to P^\wedge_K$ with $\pi\circ\sigma=\mr{id}_{S}$. Set $I=\Ker(P\twoheadrightarrow C)$; then $K\subseteq I$ and $\pi(I)=J$. Let $E_m^+$ denote $E_{P,m}^+(I)$, and write $M_m\coloneqq \calM(E^+_m)$, $\Omega^\bullet_m\coloneqq \Omega^\bullet_{E_m/A}$. By Theorem \ref{convDeRhamComparison1:t} applied to the embedding $X\hookrightarrow \Spec(P)$, we have
$$R\Gamma((X/A)_{\conv},\calM)\iso M\otimes_{E_\infty}\Omega^\bullet_{E_\infty/A},$$
and it remains to produce a canonical quasi-isomorphism between this and $M_S\otimes_{F_\infty}\Omega^\bullet_{F_\infty/A}$.

\spa

By Lemma \ref{ConvergentAffineSmooth:l}, the data $(\pi,\sigma)$ produces, for every $m\geq 0$, $A$-algebra homomorphisms
$$a_\infty\colon E_\infty\longrightarrow F_\infty,\qquad b_\infty\colon F_\infty\longrightarrow E_\infty,$$
satisfying $a_\infty\circ b_\infty=\mr{id}_{F_\infty}$. The isocrystal property of $\calM$ yields canonical identifications $M\otimes_{E_\infty}F_\infty=M_S$ and $M_S\otimes_{F_\infty,b_\infty}E_\infty=M$, so $a_\infty$ and $b_\infty$ induce morphisms of complexes
\begin{align*}
\Phi_\infty\colon\,& M\otimes_{E_\infty}\Omega^\bullet_{E_\infty/A}\longrightarrow M_S\otimes_{F_\infty}\Omega^\bullet_{F_\infty/A},\\
\Psi_\infty\colon\,& M_S\otimes_{F_\infty}\Omega^\bullet_{F_\infty/A}\longrightarrow M\otimes_{E_\infty}\Omega^\bullet_{E_\infty/A},
\end{align*}
with $\Phi_\infty\circ\Psi_\infty=\mr{id}$. It suffices to show that $\Psi_\infty\circ\Phi_\infty$, induced by the $A$-algebra endomorphism
$$\rho\coloneqq b_\infty\circ a_\infty\colon E_\infty\longrightarrow E_\infty,$$
is a quasi-isomorphism.

\spa

We proceed as in \stack{07LH}. Let $x_1,\dots,x_n$ be the polynomial generators of $P$ over $A$ and write $\rho(x_i)=x_i+z_i$ for some $z_i\in E_\infty$. Write $P'\coloneqq P[\xi_1,\dots, \xi_n]$, $I'\coloneqq (I,\xi_1,\dots,\xi_n)$, and $E'_\infty\coloneqq E_{P',\infty}(I')$. We have a decomposition of $\rho$ as $$E_\infty\xrightarrow{\lambda} E'_\infty\xrightarrow{\mu} E_\infty,$$ where $\lambda$ sends $x_i\mapsto x_i+\xi_i$ and $\mu$ sends $\xi_i\mapsto z_i$. If $\alpha$ is the automorphism of $E'_\infty$ defined by $\alpha(x_i)\coloneqq x_i-\xi_i$ and $\alpha(\xi_i)\coloneqq \xi_i$, then thanks to Lemma \ref{ConsequencePoincare:l}, we deduce that $\alpha\circ \lambda$ induces a quasi-isomorphism  $$M\otimes_{E_\infty}\Omega^\bullet_{E_\infty/A}\iso M\otimes_{E_\infty}\Omega^\bullet_{E'_\infty/A}.$$ Similarly, $\mu$ induces a quasi-isomorphism $$M\otimes_{E_\infty}\Omega^\bullet_{E'_\infty/A}\iso M\otimes_{E_\infty}\Omega^\bullet_{E_\infty/A}$$ by applying Lemma \ref{ConsequencePoincare:l} to the tautological inclusion $E_\infty\hookrightarrow E'_\infty$, which is a left inverse of $\mu$.
\end{proof}

Theorem \ref{convDeRhamComparison2:t} globalises thanks to the following vanishing result.
\begin{prop}\label{Technical:p}
	If $X$ is a closed subscheme of $\mathbb{A}^n_{A/p}$ and $\calM$ is a locally quasi-coherent sheaf of $\calO_\mr{conv}$-modules over $(X/A)_{\conv}$, then $$R\Gamma((X/A)_{\conv}, \calM\otimes_{\calO_\conv}\Omega^i_{\conv})$$ is homotopic to $0$ for all $i>0$.
\end{prop}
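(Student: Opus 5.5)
Let $P=A[x_1,\dots,x_n]$ be the ($p$-adically completed) polynomial algebra and $I\subseteq P$ the ideal of $X$ (containing $p$). Write $E_m^+=E^+_{P,m}(I)$, and take the Čech nerve $E_m^+(*)$ of Construction \ref{CechNerve:c}. The plan follows the proof of Theorem \ref{convDeRhamComparison1:t}, but for the sheaf $\calN\coloneqq\calM\otimes_{\calO_\conv}\Omega^i_\conv$ instead of $\calM$. That sheaf is again locally quasi-coherent over $(X/A)^{(m),\mr{fg}}_\conv$ by Lemma \ref{DifferentialsTauEdgedCrystals:l}.

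\emph{Step 1: reduce to a Čech computation at each level.} By Lemma \ref{LimitModules:l}, $R\Gamma((X/A)_\conv,\calN)=R\lim_m R\Gamma((X/A)^{(m)}_\conv,\calN_m)$. For fixed $m$, combine Corollary \ref{CohomologyComparison:c}, Corollary \ref{ConvChaoticTopology:c} and Proposition \ref{InfFirstColumn:p}. This identifies $R\Gamma((X/A)^{(m)}_\conv,\calN_m)$ with the cochain complex of the cosimplicial module
$$b\mapsto \calN(E^+_m(b))=M_m(b)\otimes_{E_m(b)}\Omega^i_m(b).$$

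\emph{Step 2: show each level vanishes.} By Lemma \ref{ConvAcyclicColumns:l}, this cosimplicial module is homotopic to zero for every $i\geq1$. Note that the argument there (\stack{07L9} followed by \stack{07KQ}) uses only that $\Omega^1_m(*)$ is homotopic to zero as an $E_m(*)$-module, tensored with an arbitrary cosimplicial $E_m(*)$-module. It therefore applies to locally quasi-coherent $\calM$ and not just to isocrystals. I would make this explicit: the homotopy comes from the cosimplicial structure of $E_m(*)$ alone, so it is natural in $m$.

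\emph{Step 3: pass to the limit.} Because the contracting homotopies are compatible with the transition maps $E^+_{m+1}(*)\to E^+_m(*)$, the limit complex is also homotopic to zero. This avoids needing Proposition \ref{MLZero:p}, which would require finite generation. To make this rigorous, I would represent $R\lim_m$ by the cone of $1-\text{shift}$ on $\prod_m$ of the level-$m$ complexes. A compatible family of homotopies then gives a homotopy on the product that commutes with the shift, so the $R\lim$ is contractible. This yields that $R\Gamma((X/A)_\conv,\calN)$ is homotopic to $0$.

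\emph{Main obstacle.} The hard step is establishing the naturality in $m$ of the homotopy in Step 2, i.e.\ checking that the explicit homotopy of \stack{07L9} is built functorially from the cosimplicial ring and commutes with the restriction maps $E_{m+1}(b)\to E_m(b)$. If this fails, the fallback is to show directly that $\Omega^1_m(*)$ admits homotopies defined by universal formulas in $d$ of the coordinates. Those formulas involve only $x_j\otimes1-1\otimes x_j$ and the differentials of the $x_j$, so they are visibly compatible in $m$. A secondary point to verify is that the hypothesis that $X$ is closed in $\mathbb{A}^n_{A/p}$ makes $E^+_m$ weakly final at every level via Theorem \ref{Quotient:t}. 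This holds because $P$ is smooth over $A$.
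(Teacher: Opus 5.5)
Your proposal is correct and matches the paper's proof: reduce to each level via Lemma \ref{LimitModules:l}, pass to the finitely generated subsite, note that $\calM\otimes_{\calO_\conv}\Omega^i_\conv$ is locally quasi-coherent, compute by the Čech nerve of $E_m^+$, and conclude with Lemma \ref{ConvAcyclicColumns:l}. The only difference is your Step 3, and it is unnecessary: an inverse system of acyclic complexes already has vanishing $R\lim$, because $\prod_m$ of acyclic complexes of abelian groups is acyclic, so the naturality in $m$ of the contracting homotopies (your ``main obstacle'') never has to be checked.
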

\begin{proof} Thanks to Lemma \ref{LimitModules:l} it is enough to prove that each $R\Gamma((X/A)_{\conv}^{(m)}, \calM\otimes_{\calO_\conv}\Omega^i_{\conv})$ is homotopic to $0$ for $i>0$ and $m\geq 0$. In addition, by Corollary \ref{CohomologyComparison:c}, we may replace  $(X/A)_{\conv}^{(m)}$ with $(X/A)_{\conv}^{(m),\mr{fg}}$.
	By Lemma \ref{DifferentialsTauEdgedCrystals:l}, the sheaves $\Omega^i_{\conv}$ are locally quasi-coherent in $(X/A)_{\conv}^{(m),\mr{fg}}$, which implies that $\calM\otimes_{\calO_\conv}\Omega^i_{\conv}$ is locally quasi-coherent. Thanks to Proposition \ref{InfFirstColumn:p} we can compute $R\Gamma((X/A)_{\conv}^{(m)}, \calM\otimes_{\calO_\conv}\Omega^i_{\conv})$ using the complex	$M_m(*)\otimes_{E_m(*)}\Omega^{i}_m(*),$ which is homotopic to zero by Lemma \ref{ConvAcyclicColumns:l}. This concludes the proof.
\end{proof}

We finally get the following extension of \cite[Thm. 0.6.6]{OgusCT}.

\begin{coro}\label{GlobalconvDeRhamComparison:c}
	Let $A$ be a Noetherian $p$-complete $p$-torsion free ring and $\mathfrak{Y}$ a smooth quasi-compact $p$-adic formal scheme over $A$. For every closed immersion $X\hookrightarrow \mathfrak{Y}\otimes_{\Zp}\Fp$ and every finitely generated convergent isocrystal $\calM$ of $X$ over $A$, there exists a canonical quasi-isomorphism
	$$R\Gamma((X/A)_{\conv}, \calM) \iso R\Gamma_\mr{dR}(]X[,\calM^\mr{ad}),$$ where $]X[$ is the open tube of $X$ in the generic fibre of $\mathfrak{Y}$ and $\calM^\mr{ad}$ is a flat connection over $]X[$ associated to $\calM$.
\end{coro}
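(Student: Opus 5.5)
We globalise Theorem \ref{convDeRhamComparison2:t} by a Čech argument on a finite affine covering of $\mathfrak{Y}$. Since $\mathfrak{Y}$ is quasi-compact and smooth, we choose a finite covering $\{\mathfrak{U}_i=\Spf(S_i)\}_{i\in I}$ by affine opens, where each $S_i$ is the $p$-adic completion of a smooth $A$-algebra. We may refine it so that each $\mathfrak{U}_i$ admits an étale map to a formal affine space. Then $X_i\coloneqq X\cap \mathfrak{U}_i$ is a closed subscheme of $\mathbb{A}^{n}_{A/p}$ after re-embedding via generators of $S_i$. The same holds for all finite intersections $X_{i_0\dots i_k}$: these are closed in $\mathfrak{U}_{i_0}\times\cdots\times\mathfrak{U}_{i_k}$, which is again affine and smooth, and the diagonal embedding lets us use Theorem \ref{convDeRhamComparison2:t} with $S=S_{i_0}\cotimes\cdots\cotimes S_{i_k}$. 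We could equally use the separatedness of affine intersections if $\mathfrak{Y}$ is separated, but the product embedding avoids that hypothesis.

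The first step is Zariski descent on the stack side. By Proposition \ref{EtaleDescent:p} applied at each level $m$ and then to the colimit, we have $(X/A)_\conv=\mr{colim}_{\Delta^{\mr{op}}}(X_\bullet/A)_\conv$, so
$$R\Gamma((X/A)_\conv,\calM)\iso R\lim_{[k]\in\Delta} R\Gamma((X_k/A)_\conv,\calM|_{X_k}).$$
The second step applies Theorem \ref{convDeRhamComparison2:t} termwise. This identifies the $k$-th term with the de Rham complex $M_{S_k}\otimes_{F_{k,\infty}}\Omega^\bullet_{F_{k,\infty}/A}$, where $F_{k,\infty}=E_{S_k,\infty}(J_k)$ is the ring of functions on the open tube $]X_k[$ of $X_k$ in the product embedding. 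Using Lemma \ref{ConsequencePoincare:l} and the argument of Theorem \ref{convDeRhamComparison2:t}, this is quasi-isomorphic to the de Rham cohomology of the tube of $X_{i_0\dots i_k}$ inside $\mathfrak{U}_{i_0}\cap\cdots\cap\mathfrak{U}_{i_k}$, since both are computed from smooth embeddings of the same scheme.

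The main obstacle is making these identifications canonical and functorial in the cosimplicial direction, so that they assemble into a map of cosimplicial complexes. My first attempt would be to realise the comparison of Theorem \ref{convDeRhamComparison2:t} through the double complex of Theorem \ref{convDeRhamComparison1:t}: the maps $\mathrm{Tot}(K^{\bullet,*})\leftarrow K^{0,*}$ and $\mathrm{Tot}(K^{\bullet,*})\leftarrow K^{\bullet,0}$ are natural in the embedding. The choice of $(\pi,\sigma)$ only enters up to homotopy, and that homotopy is controlled by Lemma \ref{ConsequencePoincare:l}. Proposition \ref{Technical:p} can serve as an independent check of canonicity. It shows that $R\Gamma((X/A)_\conv,\calM\otimes\Omega^i_\conv)$ vanishes for $i>0$, so the map $R\Gamma(\calM)\to R\Gamma(\calM\otimes\Omega^\bullet_\conv)$ is an equivalence. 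The right-hand side is then computed intrinsically as a sheaf-level de Rham complex, and it is visibly functorial in the open $\mathfrak{U}_{i_0\dots i_k}$.

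Finally, the open tubes $]X_{i_0\dots i_k}[$ form an admissible covering of $]X[$ by the quasi-Stein spaces $]X_i[$, each an increasing union of the affinoids $\Spa(E_m,E_m^{+,N})$. The coherent sheaves $\calM^{\mr{ad}}\otimes\Omega^j$ have vanishing higher cohomology on these quasi-Stein spaces, by Kiehl together with Proposition \ref{MLZero:p} for the $R^1\lim$ term. Hence the Čech complex of this covering computes $R\Gamma_{\mr{dR}}(]X[,\calM^{\mr{ad}})$. Here $\calM^{\mr{ad}}$ is obtained by gluing the modules $M_{S_i}$ with their connections via the isocrystal property. Comparing the two Čech totalisations term by term yields the canonical quasi-isomorphism.
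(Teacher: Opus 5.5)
Your descent step on the stack side is fine, and so is your acyclicity on the quasi-Stein pieces of $]X[$. The trouble comes from organising the proof as a termwise comparison of two \v{C}ech totalisations, which creates two problems.

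\textbf{The separatedness claim is false.} The immersion $X_{i_0}\cap X_{i_1}\to X_{i_0}\times_{A/p}X_{i_1}$ is a base change of the diagonal of $X$. It is therefore closed only if $X$ is separated, which the statement does not assume. Take $\mathfrak{Y}$ the formal affine line over $\Zp$ with doubled origin and $X=\mathfrak{Y}\otimes_{\Zp}\Fp$. Then $X_1\cap X_2=\mathbb{G}_{m,\Fp}$ is the punctured diagonal in $\mathbb{A}^2_{\Fp}$, so $S_1\cotimes S_2\to\calO(X_1\cap X_2)$ is not surjective and Theorem \ref{convDeRhamComparison2:t} does not apply. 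For the plane with doubled origin the intersection is not even affine.

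\textbf{The canonicity step is left open.} What you call the main obstacle is not resolved by your first attempt. Theorem \ref{convDeRhamComparison1:t}, together with Lemma \ref{ConsequencePoincare:l} that feeds its rows, is only available for polynomial embeddings. The passage to smooth embeddings in Theorem \ref{convDeRhamComparison2:t} goes through a non-canonical section $\sigma$. Homotopies chosen face by face do not assemble into a map of totalisations without coherent higher homotopies.

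\textbf{The fix is the paper's order of argument.} Proposition \ref{Technical:p} is not an independent check; it is how the map is constructed, as in \stack{07LL}.
\begin{itemize}
\item Since $\nabla\neq 0$, the natural chain map is the projection $\calM\otimes\Omega^\bullet_\conv\to\calM$, not a map in the other direction.
\item Let $u$ be the projection to $X_{\mr{Zar}}$. Proposition \ref{Technical:p} on affine opens gives $Ru_*(\calM\otimes\Omega^i_\conv)=0$ for $i>0$, so the projection becomes an isomorphism after $Ru_*$.
\item Evaluate $\calM\otimes\Omega^\bullet_\conv$ on the tubes $[X]^{(m)}$, using Lemma \ref{conv-local-acyclicity:l} to replace cohomology by sections there, and pass to $R\lim_m$. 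This gives a map from $Ru_*\calM$ to the pushforward of the de Rham complex of $\calM^{\mr{ad}}$ on $]X[$. It is compatible with restriction to opens of $\mathfrak{Y}$ by construction.
\item Being a quasi-isomorphism is now a local property, tested on affine opens of $\mathfrak{Y}$. There, by naturality of evaluation along the maps $a_m$ of Lemma \ref{ConvergentAffineSmooth:l}, the map is the zigzag $K^{0,*}\leftarrow\mr{Tot}(K^{\bullet,*})\to K^{\bullet,0}$ of Theorem \ref{convDeRhamComparison1:t} followed by $\Phi_\infty$ of Theorem \ref{convDeRhamComparison2:t}.
\end{itemize}
This route needs no affine intersections and no cosimplicial coherence. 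Your quasi-Stein argument (Kiehl plus Proposition \ref{MLZero:p}) is still needed at the last step. It identifies the sections of the tube-side complex over an affine open with $M_S\otimes_{F_\infty}\Omega^\bullet_{F_\infty/A}$, a point the paper leaves implicit.
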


\begin{proof}The connection $\calM^\mr{ad}$ is obtained by locally applying Construction \ref{IsocrystalConnection:c}, using the fact that the tube $]X[$ can be described as the increasing union of the adic generic fibres of the tubes $[X]^{(m)}$ defined in Construction \ref{Tubes:c}. To construct a morphism from the left hand side to the right hand side, we can then apply Proposition \ref{Technical:p} as in \stack{07LL}.  The morphism is a quasi-isomorphism by Theorem \ref{convDeRhamComparison2:t} applied locally on $\mathfrak{Y}$.
\end{proof}


\begin{thebibliography}{ABCD1}
	\bibitem[BdJ11]{BdJ11}
{ B.~Bhatt} and { A.~J.~de Jong}, \emph{Crystalline cohomology and de Rham cohomology}, \href{https://arxiv.org/abs/1110.5001}{arXiv:1110.5001} (2011).
\bibitem[Ber86]{Ber86}
{ P.~Berthelot}, Géométrie rigide et cohomologie des variétés algébriques de caractéristique $p$, in \emph{Introductions aux cohomologies $p$-adiques}, Mémoires de la Société Mathématique de France Nouvelle Série \textbf{23} (1986), 7--32.



\bibitem[D'A26]{EdgedCrystalline} { M. D'Addezio}, \textit{Edged crystalline cohomology}, in preparation, available at \href{https://daddezio.pages.math.cnrs.fr/edged-crys.pdf}{{https://daddezio.pages.math.cnrs.fr/edged-crys.pdf}} (2026).
\bibitem[D'A]{EdgedPrismatic} { M. D'Addezio}, \textit{Edged prismatic cohomology}, in preparation.

\bibitem[Dri22]{Dri22}
{ V. Drinfeld}, A stacky approach to crystals, in \emph{Dialogues between physics and mathematics: C. N. Yang at 100}, Springer, Cham, 2022, 19--47.
\bibitem[DvH22]{DvH22} M.~D'Addezio and P. van Hoften, \textit{Hecke orbits on Shimura varieties of Hodge type}, \href{https://arxiv.org/abs/2205.10344}{{arXiv:2205.10344}} (2022).
\bibitem[Ked20]{KedlayaCompanionsII}
{ K.~S.~Kedlaya}, \textit{\'Etale and crystalline companions, II},
\href{https://arxiv.org/abs/2008.13053}{{arXiv:2008.13053}} (2020).
\bibitem[KL15]{KL15}
{ K.~S.~Kedlaya} and { R.~Liu}, Relative $p$-adic Hodge theory: foundations, \emph{Astérisque} \textbf{371} (2015), 239 pp.
\bibitem[Ogu84]{Og84}
{ A. Ogus}, F-isocrystals and de Rham cohomology II---Convergent isocrystals, \emph{Duke Math. J.} \textbf{51} (1984), 765--850.
 \bibitem[Ogu90]{OgusCT} { A. Ogus},
{The Convergent Topos in Characteristic $p$},
in {\it The Grothendieck Festschrift, Vol. III}, Progress in Mathematics {\bf 88}, Birkhäuser, 1990, 133--162.
\bibitem[Sim96]{Sim96}
{ C.~Simpson}, Homotopy over the complex numbers and generalized de Rham cohomology, in \emph{Moduli of Vector Bundles} (M.~Maruyama, ed.), Dekker, 1996, 229--263.

\bibitem[SW13]{SW13}
{ P.~Scholze} and { J.~Weinstein}, Moduli of $p$-divisible groups, \emph{Camb. J. Math.} \textbf{1} (2013), 145--237.

\bibitem[Stacks]{Stacks} { The Stacks Project Authors}, \textit{Stacks Project}, available at \href{https://stacks.math.columbia.edu}{{https://stacks.math.columbia.edu}}.

\end{thebibliography}
\end{document}